\ifdef{\crop}{%
\usepackage[includeheadfoot,twoside=False,paperwidth=448pt,paperheight=587pt,rmargin=15pt,lmargin=15pt,tmargin=15pt,bmargin=15pt]{geometry}%
}{%
\setlength{\topmargin}{27mm}
\addtolength{\topmargin}{-1in}
\setlength{\oddsidemargin}{35mm}
\addtolength{\oddsidemargin}{-1in}
\setlength{\evensidemargin}{35mm}
\addtolength{\evensidemargin}{-1in}
\setlength{\textwidth}{140mm}
\setlength{\textheight}{220mm}
}%
\theoremstyle{plain}
\newtheorem{thm}{Theorem}[section]
\newtheorem*{thm*}{Theorem}
\newaliascnt{prop}{thm}
\newaliascnt{cor}{thm}
\newaliascnt{lem}{thm}
\newaliascnt{claim}{thm}
\newaliascnt{defn}{thm}
\newaliascnt{ques}{thm}
\newaliascnt{conj}{thm}
\newaliascnt{fact}{thm}
\newaliascnt{rem}{thm}
\newaliascnt{ex}{thm}
\newtheorem{prop}[prop]{Proposition}
\newtheorem{cor}[cor]{Corollary}
\newtheorem{lem}[lem]{Lemma}
\newtheorem{claim}[claim]{Claim}
\newtheorem*{prop*}{Proposition}
\newtheorem*{cor*}{Corollary}
\newtheorem*{lem*}{Lemma}
\newtheorem*{claim*}{Claim}
\theoremstyle{definition}
\newtheorem{defn}[defn]{Definition}
\newtheorem*{defn*}{Definition}
\newtheorem*{ques*}{Question}
\newtheorem*{conj*}{Conjecture}
\newtheorem*{prob*}{Problem}
\newtheorem{rem}[rem]{Remark}
\newtheorem{ex}[ex]{Example}
\newtheorem*{fact*}{Fact}
\newtheorem*{rem*}{Remark}
\newtheorem*{ex*}{Example}
\def\textsectionN~{\textsection{}}
\renewcommand\phi{\varphi}
\renewcommand\epsilon{\varepsilon}
\renewcommand\leq{\leqslant}
\renewcommand\geq{\geqslant}
\newcommand{\set}{  \@ifstar{\@setstar}{\@set}}\newcommand{\@setstar}[2]{\{\, #1 \mid #2 \,\}}
\newcommand{\@set}[1]{\{ #1 \}}
\newcommand{\lin}[1]{\langle #1 \rangle}
\newcommand{\trans}[1][1]{\raisebox{#1ex}{\scriptsize\kern0.1em$t$\kern-0.1em}}
\newcommand{\PP}{\mathbb{P}}
\newcommand{\cP}{{\PP_{\! *}}}
\newcommand{\TT}{\mathbb{T}}
\newcommand{\PN}{\PP^N}
\newcommand{\Pv}[1][N]{(\PP^{#1})\spcheck}
\newcommand{\ZZ}{\mathbb{Z}}
\newcommand{\CC}{\mathbb{C}}
\DeclareMathOperator{\Proj}{Proj}
\DeclareMathOperator{\Supp}{Supp}
\DeclareMathOperator{\Conv}{Conv}
\DeclareMathOperator{\rank}{rk}
\DeclareMathOperator{\id}{id}
\DeclareMathOperator{\Cone}{Cone}
\DeclareMathOperator{\Sym}{Sym}
\DeclareMathOperator{\GL}{GL}
\DeclareMathOperator{\pr}{pr}
\DeclareMathOperator{\Aff}{Aff}
\DeclareMathOperator{\Zero}{Zero}
\def\N{\mathbb{N}}
\def\Z{\mathbb{Z}}
\def\Q{\mathbb{Q}}
\def\R{\mathbb{R}}
\def\C{\mathbb{C}}
\def\r+{\mathbb{R}_{\geq 0}}
\def\r+{{\R}_{\geq 0}}
\def\q+{{\Q}_{\geq 0}}
\def\P{\mathbb{P}}
\def\arw{\rightarrow}
\def\*c{\C^{\times}}
\def\C{\mathbb {C}}
\def\N{\mathbb {N}}
\def\Q{\mathbb {Q}}
\def\R{\mathbb {R}}
\def\Z{\mathbb {Z}}
\newcommand{\cale}{\mathcal {E}}
\newcommand{\calo}{\mathcal {O}}
\newcommand{\Cc}{(\*c)^c}
\title[On dual defects of toric varieties]{A combinatorial description of dual defects of toric varieties}
\author[K.~Furukawa]{Katsuhisa~FURUKAWA}
\address{Graduate School of Mathematical Sciences, 
the University of Tokyo, 
Tokyo, 
Japan}
\email{katu@ms.u-tokyo.ac.jp}
\author[A.~Ito]{Atsushi~Ito}
\address{Graduate School of Mathematics, Nagoya University,
Nagoya, Japan}
\email{atsushi.ito@math.nagoya-u.ac.jp}
\subjclass[2010]{14N05, 14M25}
\keywords{Toric variety, Dual defect, Cayley sum}
\begin{document}

\maketitle

\begin{abstract}
From a finite set in a lattice,
we can define a toric variety embedded in a projective space.
In this paper,
we give a combinatorial description of the dual defect of the toric variety
using the structure of the finite set as a Cayley sum with suitable conditions.
We also interpret the description geometrically.
\end{abstract}

\section{Introduction}

Let $X \subset \PN$ be an $n$-dimensional projective variety over the field of complex numbers $\C$.
The \emph{dual variety} $X^* \subset  (\P^N)^{\vee }$ of $X$ is the closure of all points $H \in  (\P^N)^{\vee }$ 
such that as a hyperplane $H$ is tangent to $X$ at some smooth point $x \in X$,
where $(\P^N)^{\vee }$ is the dual projective space.
The \emph{dual defect} $\delta_X $ of $X$ is the natural number $N-1 - \dim X^*$.
In other words,
$\delta_X$ is the dimension of
the \emph{contact locus} on $X$ of general $H \in X^*$, which is defined to be
the closure of
\[
\{ x \in X_{sm} \, | \, H \text{ is tangent to } X \text{ at } x \},
\]
where $X_{sm} $ is the smooth locus of $X$.
A variety $X$ in $\P^N$ is said to be \emph{dual defective}
if the dual defect $\delta_X$ is positive.
Otherwise,
$X$ is called \emph{non-defective}.

In this paper, we consider the dual defect of a (not necessarily normal) toric variety defined from a finite subset of $\Z^n$ as follows (see \cite{GKZ}, \cite{CLS}). 

For a finite subset $A= \{ u_0,\ldots, u_N \} \subset \Z^n$,
we define the toric variety $X_A \subset \P^N$ to be the closure of the image of the morphism
\[
\phi_A : (\*c)^{ n} \arw \P^N \quad  : \quad  t  \mapsto [ t^{u_0} : \cdots : t^{u_N}],
\]
where $t^{u_i} := t_1^{u_{i}^1} \cdots t_n^{u_{i}^n}$ for
$t = (t_1, \dots, t_n)$ and $u_i = (u_{i}^1, \dots, u_{i}^n) \in \ZZ^n$.
We set $  \langle  A - A  \rangle \subset \Z^n $ 
to be the subgroup of $\Z^n$ 
generated by $A -A : = \{ u-u' \, | \, u,u' \in A \} $.
If $\langle  A - A  \rangle = \Z^n  $,
$(\*c)^{ n}$ is embedded into $X_A $ as an open dense subset by $\phi_A$.

Dual varieties of toric varieties have been studied by many authors from both the viewpoints of combinatorics and algebraic geometry,
starting with the work of Gelfand, Kapranov, and Zelevinsky \cite{GKZ}.
For example,
there are formulas to compute $\delta_{X_A} $ and the degree of the dual variety $X^*_{A}$ by \cite{GKZ}, \cite{DFS}, \cite{MT}, \cite{HS}.

The explicit structure of $A$ and $X_A$ for dual defective $X_A$ is also studied.
We recall the notion of Cayley sums.

\begin{defn}\label{def_cayley}
  Let $r \leq n$ be non-negative integers. Let $e_1,\ldots,e_r$ be the standard basis of $ \Z^r$.
  For finite sets $A_0,\ldots,A_r \subset \Z^{n-r}$,
  the Cayley sum $A_0 * \cdots * A_r  $ of $A_0,\ldots,A_r$ is defined to be
  \[
  A_0 * \cdots * A_r  := (A_0 \times \{0\}) \cup (A_1 \times \{e_1 \}) \cup \cdots \cup (A_r \times \{e_r\})  \subset \Z^{n-r} \times \Z^r.
  \]
  Throughout this paper, we always assume that $A_i \neq \emptyset$ for any $i$ when we consider the Cayley sum.
\end{defn}

Let $A \subset \Z^n$ be a finite subset with $\langle A-A \rangle =\Z^n$.
If $X_A $ is dual defective,
$X_A$ and $A$ have some special structure as follows.

If $X_A $ is smooth, Di Rocco \cite{DR} showed that
$X_A \subset \P^N$ has positive dual defect $d$ if and only if
there exist finite subsets $A_0, \dots, A_{r} \subset \Z^{n-r}$ with $r = (n+d) /2 $  
such that
\begin{itemize}
\item  
$A$ is $\Z$-affinely equivalent to the Cayley sum $A_0* \cdots *A_{r}$,
i.e.,
there exists a $\Z$-affine isomorphism $\Z^n \arw \Z^{n-r} \times \Z^r$ which maps $A$ onto $A_0* \cdots *A_{r}$,
\item all $\Conv(A_i)$'s have the same normal fan.
\end{itemize}
Geometrically,
this is equivalent to say that there exist a smooth toric variety $X'$ and torus equivariant ample line bundles $L_0,\ldots,L_r$ on $X'$ such that
\begin{align}\label{eq_proj_bundle}
(X_{A} ,\calo_{X_A}(1)) \simeq (\P_{X'}(L_0\oplus \cdots \oplus L_r) , \calo_{\P}(1)),
\end{align}
where $\calo_{\P}(1)$ is the tautological line bundle.
This result is generalized to the case when $X_A $ is normal and $\Q$-factorial by Casagrande and  Di Rocco \cite{CD}.
See also \cite{DN} for dual defects of smooth $X_A$.

On the other hand,
without any assumption on the singularity,
it is known that
$A$ is $\Z$-affinely equivalent to a Cayley sum $A_0 *A_{1}  $ 
if $X_A$ is dual defective by \cite{CC}, \cite{Es}.
More precisely,
$A$ is $\Z$-affinely equivalent to $A_0 * \cdots *A_{\delta} \subset  \Z^{n-\delta} \times  \Z^{\delta}$
for some $A_0,\ldots,A_{\delta} \subset  \Z^{n-\delta}$ for $\delta=\delta_{X_A}$ by {\cite[Corollary 4.2]{It}}.
However,
the converse statement 
does not hold in general (e.g., $X_A = \P^1 \times \P^1 \subset \P^3$ is non-defective for $A= \{0,1\} * \{0,1\} \subset \Z^1 \times \Z^1$).
To modify this result,
we consider the following condition on Cayley sums.

\begin{defn}\label{def_join_type}
  For finite sets $A_0,\ldots,A_r \subset \Z^{n-r}$,
 we say that the Cayley sum $A= A_0 * \cdots * A_r $ is \emph{of join type}
  if $\langle A_0 - A_0 \rangle  + \cdots + \langle A_r - A_r \rangle  \subset \Z^{n-r}$ is the inner direct sum of $\langle A_0 - A_0 \rangle  , \ldots, \langle A_r - A_r \rangle $,
  i.e.,
 \begin{align}\label{eq_of_join}
  \langle A_0 - A_0 \rangle \oplus  \cdots \oplus  \langle A_r - A_r \rangle \arw \Z^{n-r} \ : \  (m_0,\ldots,m_r) \mapsto m_0 + \cdots + m_r
\end{align}
  is injective.
  We note that \ref{eq_of_join}
  is surjective if and only if $\langle  A - A  \rangle =\Z^{n-r} \times \Z^r$ holds
  since $\langle  A - A  \rangle \cap (\Z^{n-r} \times \{0\} )$ coincides with the image of \ref{eq_of_join}.
  Hence under the assumption $ \langle A-A \rangle = \Z^{n-r} \times \Z^r$,
  $A_0 * \cdots * A_r $ is of join type if and only if \ref{eq_of_join} is an isomorphism.
  
      If $A= A_0 * \cdots * A_r $ is of join type,
    $X_A$ coincides with
    \[
    J(X_{A_0}, \dots, X_{A_r}) : = \bigcup_{x_0 \in X_0, \dots, x_r \in X_r} \overline{x_0 \dots x_r},
    \]
    the join of
    $X_{A_0}, \dots, X_{A_r}$ with natural embeddings
    $X_{A_i}  \hookrightarrow X_A \subset \PN$,
    where $\overline{x_0 \dots x_r}$ is
    the linear subvariety
    spanned by $x_0, \dots, x_r$.
See \autoref{subsec_join} for details.  
\end{defn}

Let $A=A_0 * \cdots *A_r \subset \Z^{n-r} \times \Z^r$ be a Cayley sum with $\langle  A - A  \rangle  =\Z^{n-r} \times \Z^r$
and let $ p : \Z^{n-r} \arw \Z^{n-r-c}$ be a surjective group homomorphism for some $ 0 \leq c \leq n-r$.
 The projection $\pi : \Z^{n-r} \times \Z^r \arw \Z^r$ to the second factor is decomposed as
\[
\Z^{n-r} \times \Z^r \stackrel{p \times \id_{\Z^r}}{\longrightarrow} \Z^{n-r-c} \times \Z^r \arw \Z^r.
\]
Since $p \times \id_{\ZZ_r} (A_0 * \cdots *A_r) = p(A_0) * \cdots * p(A_r) $ and $\pi(A_0 * \cdots *A_r) =\{0,e_1,\ldots,e_r\}$,
we have subvarieties of $X_{A_0 * \cdots *A_r} $ as
\[
X_{\{0,e_1,\ldots,e_r\}} \subset X_{  p(A_0) * \cdots * p(A_r) } \subset X_{A_0 * \cdots *A_r} .
\]
We note that $X_{\{0,e_1,\ldots,e_r\}}=\P^r$
and $X_{  p(A_0) * \cdots * p(A_r) } $ is of codimension $c$ in $X_{A_0 * \cdots *A_r}$.

\vspace{2mm}
The following is the main result of this paper.

\newcommand{\ThmStrStatement}{
  Let $A \subset \Z^n$ be a finite subset with $\langle  A - A  \rangle = \Z^n  $. 
  Then there exist finite subsets $A_0,\ldots,A_r \subset \Z^{n-r}$ and a surjective group homomorphism $ p : \Z^{n-r} \arw \Z^{n-r-c}$ 
  for some non-negative integers $r,c$
  such that
  \begin{itemize}
  \item[(a)] $A \subset \Z^n$ is $\Z$-affinely equivalent to the Cayley sum $A_0*\cdots* A_r \subset \Z^{n-r} \times \Z^{r}$,
  \item[(b)] $p(A_0) * \cdots * p(A_r) \subset \Z^{n-r-c} \times \Z^{r}$ is of join type,
\item[(c)] $\delta_{X_A}=r-c$,
\item[(d)] If $A'_0, \ldots, A'_{r'} \subset \Z^{n-r'}$ and $p' : \Z^{n-r'} \arw \Z^{n-r-c'}$ also satisfy (a), (b), (c) for some $r',c'$,
it holds that 
\[
X_{\{0,e_1,\ldots,e_r\}} \subset X_{\{0,e_1,\ldots,e_{r'}\}} \subset X_{  p'(A'_0) * \cdots * p'(A'_{r'}) } \subset  X_{  p(A_0) * \cdots * p(A_r) } 
\]
as subvarieties of $X_A$,
where we identify $X_A $ with $X_{A_0* \cdots * A_r}$ or $X_{A'_0* \cdots * A'_{r'}}$ by (a).
In particular,
$r \leq r'$ and $c \leq c'$ hold.
\end{itemize}
}

\begin{thm}
  \label{thm_structure}
  \ThmStrStatement{}
\end{thm}

The meaning of \autoref{thm_structure} is as follows:
If $A_0,\ldots,A_r \subset \Z^{n-r}$ and $ p : \Z^{n-r} \arw \Z^{n-r-c}$ satisfy (a), (b) in \autoref{thm_structure},
it is not so difficult to show $\delta_{X_A} \geq r-c$ (see \autoref{lem_tangent_dim_alpha} (iv)).
\autoref{thm_structure} states that the equality (c) holds for some $A_i$ and $p$.
As in the following example,
$A_i$ and $p$ which satisfy (a), (b), (c) are not unique in general.
Roughly speaking,
the property (d)  
guarantees the uniqueness of $A_i$ and $p$ in \autoref{thm_structure}
up to suitable $\Z$-affine isomorphisms. 
See also \autoref{rem_uniqueness}.


\begin{ex}\label{ex_not_unique}
Let $A=\{(0,0,0), (1,0,0), (0,1,0), (1,1,0), (0,0,1)\} \subset \Z^3$.
Then $A$ is $\Z$-affinely equivalent to the Cayley sum $A_0* A_1 \subset \Z^{2} \times \Z$
for 
\[
A_0=\{(0,0), (1,0), (0,1), (1,1)\} , \quad A_1=\{(0,0)\} \subset \Z^2.
\]
Hence $X_A \subset \P^4$ is the cone of $X_{A_0} =\P^1 \times \P^1 \subset \P^3$. 
In particular,
$\delta_{X_A}= 1$.
Since $A_0* A_1 $ is of join type,
$A_0,A_1$ and $p:=\id_{\Z^2} $ with $r=1,c=0$ satisfy (a), (b), (c) in \autoref{thm_structure}.
We can check (d) directly by considering all Cayley sums which are $\Z$-affinely equivalent to $A$.

We note that
$A$ is $\Z$-affinely equivalent to the Cayley sum $\tilde{A}_0* \tilde{A}_1* \tilde{A}_2 \subset \Z \times \Z^2$ for 
\[
\tilde{A}_0=\tilde{A}_1=\{0,1\}, \quad \tilde{A}_2=\{0\} \subset \Z.
\]
Then $\tilde{A}_0,\tilde{A}_1, \tilde{A}_2$ and the zero map $\tilde{p} :\Z \arw \Z^0=\{0\}$
with $\tilde{r}=2,\tilde{c}=1$ satisfy (a), (b), (c),
but do not satisfy (d).
\end{ex}

As a corollary of \autoref{thm_structure},
we obtain a combinatorial description of the dual defects of toric varieties.

\begin{cor}\label{thm_structure_max}
  Let $A \subset \Z^n$ be a finite subset with $\langle  A - A  \rangle = \Z^n  $. 
  Then the dual defect $\delta_{X_A} $ is the maximum of the values $ r-c $ for non-negative integers $r,c$ such that
  there exist finite subsets $A_0,\ldots,A_r \subset \Z^{n-r}$ and a surjective group homomorphism $ p : \Z^{n-r} \arw \Z^{n-r-c}$ 
which satisfy (a), (b) in \autoref{thm_structure}.
\end{cor}

\vspace{2mm}
Let
$A_0,\ldots,A_r \subset \Z^{n-r}$ and  $ p : \Z^{n-r} \arw \Z^{n-r-c}$ be as in \autoref{thm_structure}.
Since $p \times \id_{\ZZ_r} (A_0 * \cdots *A_r) = p(A_0) * \cdots * p(A_r) $ is of join type,
we have $X_{p \times 1_{\ZZ_r} (A_0 * \cdots *A_r)} = J(X_{p(A_0)}, \dots, X_{p(A_r)})$.
Furthermore,
$X_{p \times \id_{\ZZ_r} (A_0 * \cdots *A_r)} \subset X_A$ is nothing but the closure of a fiber of the torus equivariant rational map
$X_A \dashrightarrow (\*c)^c$ induced by  $\Z^c \simeq \ker (p \times \id_{\Z^r})  \subset \Z^{n-r} \times \Z^r$.
Hence we have
the following geometric structure of a toric variety
with respect to the dual defect.

\begin{thm}
  \label{thm_structure_geometry}
Let $A \subset \Z^n$ be a finite subset with $\langle  A - A  \rangle = \Z^n  $.
There exists a torus equivariant dominant rational map $\psi : X_A \dashrightarrow (\*c)^c$ for some $c \geq 0$ such that
the closure of each fiber is projectively equivalent
  to the join of $r+1$ non-defective toric varieties
  and $\delta_{X_A} = r-c$.
\end{thm}

In the general theory of projective geometry,
we have the following two lower bounds
about dual defects (see \autoref{sec_geometry}).

\begin{itemize}
\item Let $X \subset \PN$ be a projective variety.
If there exists a covering family $\{Y_{s }\}_{s}$ (i.e., $\overline{\bigcup Y_s} =X$) of subvarieties of $X$ of codimension $c$ and $\delta_{Y_{s}} = \delta$ for general $s$,
it holds that $\delta_X \geq \delta -c$.
\item If $Y$ is the join of $r+1$ varieties,
then $\delta_Y \geq r$ holds.
\end{itemize}
\autoref{thm_structure_geometry} means that $X_A$ has a covering family $\{ Y_{s } \}_{s \in (\*c)^r}$ for $Y_{s } = \overline{\psi^{-1}(s)} $
such that each $\overline{\psi^{-1}(s)} $ is the join of $r+1$ toric varieties
and the equalities hold in the above two inequalities.

\begin{rem}\label{rem_CD}
If $X_A$ is smooth,
we can take as $\psi$ in \autoref{thm_structure_geometry} the \emph{morphism} $\P_{X'}(L_0 \oplus \cdots \oplus L_r) \arw X'$ under the identification \ref{eq_proj_bundle}.
More generally,
if $X_A$ is normal and $\Q$-factorial,
we can take as $\psi$ an elementary extremal contraction of fiber type 
$\psi : X_A \arw X'$ to a normal $\Q$-factorial toric variety $X'$
by \cite[Corollary 5.5, Remark 5.6]{CD}.
However,
we cannot take a morphism as $\psi$ in general,
even if $X_A$ is normal. See \autoref{ex2}.
\end{rem}

In the end of the introduction,
we explain the idea of the proof of \autoref{thm_structure}.
Let $A \subset \Z^n$ be a finite set with $ \langle A-A\rangle=\Z^n$.
Let $H \subset \P^{N}$ be a general hyperplane which is tangent to $X_A$ at the unit element $1_n  \in (\*c)^{n}  \subset X_A$.
By definition, $ \delta_{X_A}$ is the dimension of the open subset of the contact locus,
\[
Z^{\circ}_H:= \{ x \in (\*c)^n \, | \, H \text{ is tangent to } X_A \text{ at } x \}  \subset X_A \subset \P^N.
\]

\vspace{1mm}
If $A=A_0*\cdots* A_r $ for some $A_0,\ldots,A_r \subset \Z^{n-r}$,
we have a subtorus 
\[
\{1_{n-r}\} \times (\*c)^r  \subset   (\*c)^{n-r}  \times (\*c)^{r}  \subset X_{A_0*\cdots* A_r} .
\]
By a direct calculation,
we have a linear algebraic description of $Z^{\circ}_H \cap ( \{1_{n-r}\} \times (\*c)^r )$ by using $\langle A_i -A_i \rangle $'s in $\Z^{n-r}$
in \autoref{sec_contact_locus}. 

\vspace{2mm}

To reduce \autoref{thm_structure} to this linear algebraic description,
we use the well-known fact that the closure $Z_H:=\overline{Z^{\circ}_H} \subset X_A \subset \P^N$ is a linear subvariety.
In \autoref{sec_refinement},
we show that the linear subvariety $Z_H \subset X_A$ induces an identification of $A$
with a Cayley sum $A_0*\cdots* A_r$ for some $r$
such that
$Z_H$ is \emph{contained} in a (possibly larger) linear subvariety
\[
\P^r = \overline{ \{1_{n-r}\} \times (\*c)^r } \subset X_{A_0*\cdots* A_r} =X_A.
\]
Then $Z^{\circ}_H$ is contained in $\{1_{n-r}\} \times (\*c)^r  $.
Hence we have a linear algebraic description of $Z^{\circ}_H =Z^{\circ}_H \cap ( \{1_{n-r}\} \times (\*c)^r )$,
from which we can construct $p : \Z^{n-r} \arw \Z^{n-r-c}$.

In the proofs of \cite{DR}, \cite{CD},
they construct $\psi$ in \autoref{thm_structure_geometry}  as an extremal contraction induced by lines in $Z_H$.
However,
we cannot take a morphism as $\psi$ in general.
This is the reason why we do not use extremal contractions in our proof.

\vspace{3mm}
This paper is organized as follows.
In \autoref{sec_notation},
we give some preliminaries.
In \autoref{sec_refinement},
we prove that a linear subvariety of $X_A$ induces an identification of $A$ with a suitable Cayley sum (\autoref{thm_plane_embedding}).
\autoref{thm_plane_embedding} has been proven concurrently by Ilten and Zotine in independent work \cite{IZ}.
In \autoref{sec_contact_locus},
we give a linear algebraic description of $Z^{\circ}_H \cap ( \{1_{n-r}\} \times (\*c)^r )$, as stated in the idea of the proof.
In \autoref{sec_proof},
we prove \autoref{thm_structure} and \autoref{thm_structure_max}. 
In \autoref{sec_geometry},
we interpret \autoref{thm_structure} geometrically and show \autoref{thm_structure_geometry}.
Throughout this paper,
we work over the field of complex numbers $\C$.

\subsection*{Acknowledgments}
The authors would like to thank Professor Nathan Ilten and Mr. Alexandre Zotine
for sending us their draft.
The second author was supported by Grant-in-Aid for JSPS fellows No.\ 14J01881.

\section{Preliminaries}\label{sec_notation}

\subsection{Notations and conventions}
We denote by $\N,\Z,\Q,\R$, and $\C$ the set of all
natural numbers, integers, rational numbers, real numbers, and complex numbers respectively.
In this paper, $\N$ contains $0$.
We denote $\C \setminus 0$ by $\*c$.
Let us denote by $1_n$ the unit element of the algebraic torus $(\*c)^n$.

Let  $M,M'$ be free abelian groups.
We say that a subset $S \subset M$ is $\Z$-\emph{affinely equivalent to} a subset $S' \subset M' $
if there exists a $\Z$-affine isomorphism $M \arw M'$ which maps $S$ onto $S'$.

For a free abelian group $M$ and for a field $k$,
we denote $M \otimes _{\Z} k$ by $M_{k}$.

Throughout this paper, we use the word ``general'' with respect to the Zariski topology,
that is,
for a variety $X$, we say that a property holds at a \emph{general} point of $X$
if it holds for all points in the complement of a proper algebraic subset.

\subsection{Preliminary on toric varieties and Cayley sums}\label{subsec_preliminary}

  For a finite subset $A \subset \Z^n$,
  let $\theta : \Z^m \arw  \Aff (A) $ be a $\Z$-affine isomorphism,
  where $\Aff (A) \subset \Z^n$ is the affine lattice spanned by $A$
  and $m= \rank \Aff (A) = \rank \langle A -A \rangle $.
  Then $ \theta^{-1}(A)$ spans $\Z^m$ as an affine lattice and
  $X_{\theta^{-1}(A)}$ is naturally identified with $X_A$
  by \cite[Chapter 5, Proposition 1.2]{GKZ}.
  In particular, we can identify $X_A $ and $X_{A'}$
  if $A \subset \Z^n$ and $A' \subset \Z^n$ are $\Z$-affinely equivalent.

\vspace{2mm}
In the rest of this subsection,
$A \subset \Z^n$ is a finite subset with $\langle A-A\rangle=\Z^n$.
The following is a well-known fact. See \cite[Lemma 2.1]{FI} for example for the proof.

\begin{lem}\label{lem_projection}
Let $\pi : \Z^n \arw \Z^{n'}$ be a surjective group homomorphism.
Then there exists a natural torus equivariant embedding $i : X_{\pi(A)} \hookrightarrow X_A \subset  \P^{\#A-1}$
so that $X_{\pi(A)} \subset \P^{\# \pi(A) -1}$ and $i (X_{\pi(A)}) \subset  \P^{\#A-1}$ are projectively equivalent.
\end{lem}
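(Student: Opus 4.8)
The plan is to realize $X_{\pi(A)}$ as a linear section inside $X_A$ by combining the dual of $\pi$ on tori with a linear embedding of projective spaces that absorbs the repetitions $\pi$ may create. Write $A=\{u_0,\dots,u_N\}$ with $N=\#A-1$, and let $\pi(A)=\{v_0,\dots,v_{N'}\}$ with $N'=\#\pi(A)-1$, so that there is a surjection $j:\{0,\dots,N\}\arw\{0,\dots,N'\}$ with $\pi(u_i)=v_{j(i)}$ for all $i$. Viewing $\Z^n$ and $\Z^{n'}$ as the character lattices of $(\*c)^n$ and $(\*c)^{n'}$, the surjection $\pi:\Z^n\arw\Z^{n'}$ induces an injective homomorphism of tori $\pi^\vee:(\*c)^{n'}\arw(\*c)^n$, characterized by $(\pi^\vee(s))^u=s^{\pi(u)}$ for all $u\in\Z^n$. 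Also, since $\langle A-A\rangle=\Z^n$ and $\pi$ is surjective, $\langle \pi(A)-\pi(A)\rangle=\pi(\langle A-A\rangle)=\Z^{n'}$, so $\phi_{\pi(A)}$ embeds $(\*c)^{n'}$ as a dense open torus orbit in $X_{\pi(A)}$.

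Next I would introduce the linear embedding $\iota:\P^{N'}\arw\P^N$, $[y_0:\cdots:y_{N'}]\mapsto[y_{j(0)}:\cdots:y_{j(N)}]$. Because $j$ is surjective this is a well-defined closed immersion, whose image is the linear subspace $\{[x_0:\cdots:x_N]: x_i=x_{i'}\text{ whenever }j(i)=j(i')\}$. The key identity is $\iota\circ\phi_{\pi(A)}=\phi_A\circ\pi^\vee$: for $s\in(\*c)^{n'}$ the $i$-th coordinate of $\iota(\phi_{\pi(A)}(s))$ is $s^{v_{j(i)}}=s^{\pi(u_i)}=(\pi^\vee(s))^{u_i}$, which is the $i$-th coordinate of $\phi_A(\pi^\vee(s))$. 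Taking closures and using that $\iota$ is a closed immersion gives $\iota(X_{\pi(A)})=\iota\bigl(\overline{\phi_{\pi(A)}((\*c)^{n'})}\bigr)=\overline{\phi_A(\pi^\vee((\*c)^{n'}))}\subset X_A$, the inclusion holding because the right-hand set lies in $\phi_A((\*c)^n)$, which is dense in the closed set $X_A$. Setting $i:=\iota|_{X_{\pi(A)}}:X_{\pi(A)}\arw X_A$, we get a closed immersion which, through the linear isomorphism $\iota$ onto its image, identifies $X_{\pi(A)}\subset\P^{N'}$ with $i(X_{\pi(A)})\subset\P^N$; that is exactly projective equivalence. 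Torus equivariance of $i$ relative to $\pi^\vee:(\*c)^{n'}\hookrightarrow(\*c)^n$ follows from the key identity on the dense torus orbits together with continuity.

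I do not expect a genuine obstacle here: the statement is essentially bookkeeping (and is attributed to \cite{FI}). The one point that needs care is that $\pi(A)$ may have strictly fewer elements than $A$, so one must not conflate $\P^N$ with $\P^{N'}$; the index map $j$ and the linear embedding $\iota$ are introduced precisely to handle this, and the only things to verify are that $\iota$ is a well-defined closed immersion (using surjectivity of $j$) and that it intertwines $\phi_{\pi(A)}$ with $\phi_A\circ\pi^\vee$.
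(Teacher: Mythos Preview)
Your argument is correct and is the standard one; the paper itself does not prove this lemma but simply cites \cite[Lemma 2.1]{FI}, so there is nothing substantive to compare against. One minor remark: the linear map $\iota$ is already $(\*c)^{n'}$-equivariant on all of $\P^{N'}$ (not just on the dense orbit), since for $t'\in(\*c)^{n'}$ the $i$-th coordinate of $\iota(t'\cdot y)$ is $t'^{v_{j(i)}}y_{j(i)}=t'^{\pi(u_i)}y_{j(i)}=(\pi^\vee(t'))^{u_i}y_{j(i)}$, so you do not even need the continuity step.
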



By this lemma,
we can regard $X_{\pi(A)}$ as a closed subvariety of $X_A$.
If $A=A_0 * \cdots * A_r$ is a Cayley sum and $\pi : \Z^{n-r} \times \Z^r \arw \Z^r$ is the projection to the second factor,
the subvariety $\P^r =X_{\{ 0,e_1,\ldots,e_r\}} \subset X_{A_0 * \cdots * A_r}$ coincides with 
the closure of the subtorus $ \{1_{n-r}\} \times (\*c)^r \subset (\*c)^{n-r} \times (\*c)^r \subset X_{A_0 * \cdots * A_r}$.

\vspace{2mm}

\begin{defn}\label{def_cayley_structure}
A surjective group homomorphism $\pi : \Z^n \arw \Z^r$ is called a \emph{Cayley structure} of $A$
if $\pi(A)$ is $\Z$-affinely equivalent to $\{ 0,e_1,\ldots,e_r\}$.

We say that two Cayley structures $\pi, \pi' :\Z^n \arw \Z^r$ of $A$ are \emph{equivalent}
if there exists a group isomorphism $\iota : \Z^r \arw \Z^r$ such that $\pi = \iota \circ \pi'$.
In other words,
$\pi$ and $\pi'$ are equivalent if and only if $X_{\pi(A)} $ coincides with $X_{\pi'(A)}$ as subvarieties of $X_A$.
\end{defn}

For example,
the projection $\Z^{n-r} \times \Z^r \arw \Z^r$ to the second factor is a Cayley structure of the Cayley sum $A_0 * \cdots * A_r \subset \Z^{n-r} \times \Z^r$.
The following lemma, which is also well-known,
states that
we can construct a Cayley sum from a Cayley structure conversely.
We give a proof since we will use the argument in \autoref{sec_proof}.

\begin{lem}\label{lem_projection_cayley}
Let $\pi : \Z^n \arw \Z^{r}$ be a Cayley structure of $A$.
Then there exist finite subsets $A_0,\ldots,A_r \subset \Z^{n-r}$ and $\Z$-affine isomorphisms $f : \Z^n \arw \Z^{n-r} \times \Z^r$ and $g: \Z^r \arw \Z^r$
such that $f(A)= A_0 * \cdots *A_r$ and
  \[
  \xymatrix{
    \Z^n \ar[d]_{f} \ar[r]^{\pi} \ar@{}[dr] &  \Z^r \ar[d]_{g} \\
     \Z^{n-r} \times \Z^r  \ar[r]^(.6){\pr_2} & \Z^r \\
  }
  \]
is commutative,
where $\pr_2$ is the projection to the second factor.
In particular,
\begin{itemize}
\item[(i)] $A$ is $\Z$-affinely equivalent to the Cayley sum $A_0 * \cdots * A_r$,
\item[(ii)] under the identification of $X_A $ and $X_{A_0 * \cdots * A_r} $ by $f$, the subvariety $X_{\pi(A)} \subset X_A$
coincides with $X_{\{ 0,e_1,\ldots,e_r\}} \subset X_{A_0 * \cdots * A_r}$.
\end{itemize}
\end{lem}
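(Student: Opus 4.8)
The plan is to carry out the combinatorial reduction in three moves and then read off (i) and (ii). Write $B:=\pi(A)\subset\Z^r$. First I would record what the hypotheses give. Since $\pi$ is surjective and $\langle A-A\rangle=\Z^n$, we have $\langle B-B\rangle=\pi(\langle A-A\rangle)=\Z^r$; and since $B$ is $\Z$-affinely equivalent to $\{0,e_1,\dots,e_r\}$, necessarily $\#B=r+1$, say $B=\{b_0,\dots,b_r\}$ with $b_0$ the vertex sent to $0$ by the equivalence, and then $\{b_1-b_0,\dots,b_r-b_0\}$ is a $\Z$-basis of $\Z^r$ (it is the image of a basis under the linear part of the affine equivalence). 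Hence there is a $\Z$-affine isomorphism $g:\Z^r\to\Z^r$ with $g(b_0)=0$ and $g(b_i)=e_i$ for $1\le i\le r$, so that $g(\pi(A))=\{0,e_1,\dots,e_r\}$; this will be the $g$ of the statement, and this move is the only place the hypothesis on $\pi(A)$ is essentially used.

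Second, I would split the surjection. Let $c:=g(0)\in\Z^r$ and let $\overline g$ be the linear part of $g$, so $g(v)=\overline g(v)+c$. Since $\overline g\circ\pi:\Z^n\to\Z^r$ is a surjection of free abelian groups it admits a $\Z$-linear section; choosing one together with a $\Z$-linear isomorphism $\ker\pi=\ker(\overline g\circ\pi)\to\Z^{n-r}$ produces a $\Z$-linear isomorphism $f_1:\Z^n\to\Z^{n-r}\times\Z^r$ with $\pr_2\circ f_1=\overline g\circ\pi$. Setting $f(x):=(\pr_1(f_1(x)),\,\pr_2(f_1(x))+c)$ gives a $\Z$-affine isomorphism, and, because $\pi$ is linear, $\pr_2\circ f=\overline g\circ\pi+c=g\circ\pi$, so the square commutes.

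Third, I would identify $f(A)$. As $\pr_2(f(A))=g(\pi(A))=\{0,e_1,\dots,e_r\}$, the set $f(A)$ is the disjoint union of its slices over these $r+1$ points; putting $A_0:=\pr_1\!\big(f(A)\cap(\Z^{n-r}\times\{0\})\big)$ and $A_i:=\pr_1\!\big(f(A)\cap(\Z^{n-r}\times\{e_i\})\big)$ for $1\le i\le r$ — all nonempty, since each of $0,e_1,\dots,e_r$ lies in the image — yields exactly $f(A)=A_0*\cdots*A_r$ by \autoref{def_cayley}. This proves the displayed assertion, and (i) is immediate since $f$ is a $\Z$-affine isomorphism. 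For (ii) I would apply \autoref{lem_projection} to the surjection $\pi$ and to $\pr_2:\Z^{n-r}\times\Z^r\to\Z^r$: the former gives the natural embedding $X_{\pi(A)}\subset X_A$, while the latter, together with the discussion following \autoref{lem_projection}, gives $X_{\{0,e_1,\dots,e_r\}}=\P^r\subset X_{A_0*\cdots*A_r}$ as the closure of $\{1_{n-r}\}\times(\*c)^r$. Under the identification $X_A=X_{f(A)}$ induced by $f$ and the commutative square, which intertwines $\pi$ with $\pr_2$ via the isomorphism $g$, naturality of the embedding in \autoref{lem_projection} then matches $X_{\pi(A)}\subset X_A$ with $X_{\{0,e_1,\dots,e_r\}}\subset X_{A_0*\cdots*A_r}$.

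There is no serious obstacle here — the statement is ``well-known'' and the proof is essentially bookkeeping — but two points deserve care. The first, already flagged, is that without the hypothesis that $\pi(A)$ is $\Z$-affinely equivalent to $\{0,e_1,\dots,e_r\}$ one could only conclude that $\pi(A)$ affinely spans $\Z^r$, not that it has exactly $r+1$ elements whose differences from one vertex form a basis, so the slices would fail to assemble into a Cayley sum. The second is that the identifications $X_S=X_{\theta(S)}$ recalled before \autoref{lem_projection} must be checked to be compatible with the ``natural'' embeddings of \autoref{lem_projection}, rather than merely asserted, when deducing (ii); and it is precisely the explicit construction of $g$, $f$, and the $A_i$ in the three moves above that will be reused in \autoref{subsec_proof}, so the choice of section in the second move should be made explicit there.
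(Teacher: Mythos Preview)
Your proof is correct and follows essentially the same route as the paper: split the surjection (the paper takes a section $s$ of $\pi$ itself and then post-composes with $\id\times g$, while you split $\overline g\circ\pi$ and then translate---these are equivalent since $\overline g$ is an isomorphism), identify $\ker\pi\simeq\Z^{n-r}$, and read off the $A_i$ as the slices. The only cosmetic difference is that the paper records the explicit formula $A_i=(\pi^{-1}(u'_i)\cap A)-s(u'_i)$ in terms of the section $s$, which is precisely the form reused in \autoref{subsec_proof}; your closing remark anticipates this.
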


\begin{proof}
Let $s : \Z^r \arw \Z^n$ be a section of $0 \arw \ker \pi \arw \Z^n \stackrel{\pi}{\longrightarrow} \Z^r \arw 0$
and set $\pi(A)=\{u'_0,\ldots,u'_r\}$.
We identify $\ker \pi  $ with $  \Z^{n-r}$ by taking a basis of $\ker \pi$.
Then the isomorphism defined by
\begin{align}\label{eq_splittiing}
\Z^n \arw \ker \pi \times \Z^r =\Z^{n-r} \times \Z^r  \ :  \ u \mapsto ( u- s(\pi(u)), \pi(u) )
\end{align}
maps $A$ onto $ \bigcup_{i=0}^r (A_i \times \{u'_i\} )$,
where 
\[
A_i := (\pi^{-1}(u'_i) \cap A ) - s(u'_i) \subset \ker \pi = \Z^{n-r}
\]
is the parallel translation of $ \pi^{-1}(u'_i) \cap A $ by $- s(u'_i) $.

Let $g$ be the $\Z$-affine isomorphism $\Z^r \arw \Z^r$ defined by $g(u'_0)=0$ and $g(u'_i) =e_i $ for $1 \leq i \leq r$.
Let $f : \Z^ n \arw \Z^{n-r} \times \Z^r$ be the composite of \ref{eq_splittiing} and $\id_{\Z^{n-r}} \times g : \Z^{n-r} \times \Z^r \arw \Z^{n-r} \times \Z^r$.
By construction,
$f(A)= A_0 * \cdots * A_r$ and $\pr_2 \circ f = g \circ \pi$ hold.
(i) follows from $f(A)= A_0 * \cdots *A_r$ and (ii) follows from the commutativity $\pr_2 \circ f = g \circ \pi$.
\end{proof}

\begin{defn}\label{def_join_type_wrt}
A Cayley structure $\pi : \Z^n \arw \Z^{r}$ of $A$ is called 
\emph{of join type} if
the Cayley sum $A_0 * \cdots * A_r$ obtained in \autoref{lem_projection_cayley} is of join type.
This is equivalent to the condition that
the subspace $\sum_{i=0}^r M_i \subset \ker \pi$
is the inner direct sum of $M_i$ 's,
where $\pi(A)=\{u'_0,\ldots,u'_r\}$ and
\[
M_i := \langle \pi^{-1}(u'_i) \cap A  - \pi^{-1}(u'_i) \cap A  \rangle.
\]
We note that $M_i \subset \Z^n$ is contained in $\ker \pi$
even though $ \pi^{-1}(u'_i) \cap A $ is not contained in $\ker \pi$ in general.
\end{defn}

A partial order on the set of equivalence classes of Cayley structures of $A$
are introduced in \cite{IZ}:

\begin{defn}\label{def_order}
Let $\pi : \Z^n \arw \Z^r$ and $\pi' :\Z^n \arw \Z^{r'} $ be Cayley structures of $A $.
We define $\pi$ to be \emph{smaller than or equal} to $\pi'$ (written $\pi \preceq \pi'$)
if and only if $\pi$ factors through $\pi'$, i.e.,
there exists a surjective group homomorphism $ \varpi : \Z^{r'} \arw \Z^r$
such that $\pi =\varpi \circ \pi'$.
In other words,
$\pi \preceq \pi'$ if and only if $X_{\pi(A)} $ is contained in $X_{\pi'(A)}$ as subvarieties of $X_A$.

We note that both $\pi \preceq \pi'$ and $\pi' \preceq \pi$ hold if and only if $\pi$ and $\pi'$ are equivalent.
\end{defn}

Finally, we see one more lemma.
The simplest case of the lemma is the following example.

\begin{ex}\label{ex_r_r'}
Let $A'_0,\ldots,A'_r,A'_{r+1} \subset \Z^{n-r-1}$ be finite subsets for $0 \leq r \leq n-1$.
Set $A_0,\ldots,A_r \subset \Z^{n-r-1} \times \Z e_{r+1} \simeq \Z^{n-r}$ by
\[
A_0 = A'_0 \times \{0\} \cup A'_{r+1} \times \{e_{r+1}\} , \quad
A_i  = A'_i \times \{0\} \ \text{ for } \ 1 \leq i \leq r .
\]
Then $A_0 * \cdots * A_r \subset (\Z^{n-r-1} \times \Z e_{r+1}) \times \Z^r$ is $\Z$-affinely equivalent to $A'_0 * \cdots * A'_{r} * A'_{r+1} \subset \Z^{n-r-1} \times \Z^{r+1}$.
Since 
\begin{align*}
\langle A_0 -A_0 \rangle &= \langle A'_0 -A'_0 \rangle   \times \{0\} + \langle A'_{r+1} -A'_{r+1} \rangle  \times \{0\} + \Z (0,e_{r+1}), \\
\langle A_i -A_i \rangle &= \langle A'_i -A'_i \rangle   \times \{0\} \ \text{ for } \  1 \leq i \leq r 
\end{align*}
in $\Z^{n-r-1} \times \Z e_{r+1}$,
we have that
$A_0 * \cdots * A_r$
is of join type if so is $A'_0 * \cdots * A'_{r} * A'_{r+1}$.
\end{ex}

By repeating this argument, we have the following lemma.

\begin{lem}\label{lem_pi_pi'}
Let $\pi : \Z^n \arw \Z^r$ and $\pi' :\Z^n \arw \Z^{r'} $ be Cayley structures of $A $ with $\pi \preceq \pi'$.
If $\pi'$ is of join type,
so is $\pi$.
\end{lem}

\begin{proof}
By the induction of $r' -r$,
it suffices to show the case $r'=r+1$.
In this case,
this lemma follows from \autoref{ex_r_r'}.
\end{proof}


\section{Cayley structure induced from planes}\label{sec_refinement}

\subsection{Normal toric varieties defined by lattice polytopes}
In this section,
we consider normal toric varieties defined by polytopes.
Hence we recall some notations about normal toric varieties.
We refer the reader to \cite{Fu}, \cite{CLS} for a further treatment.

A \emph{lattice polytope} in $\R^n$ is the convex hull of a finite set in $\Z^n$.
The \emph{dimension} of a lattice polytope $P \subset \R^n$ is
the dimension of the affine space spanned by $P$.
For a subset $S$ in an $\R$-vector space,
we denote by $\Conv(S)$ (resp.\ $ \Cone(S)$) the convex hull of $S$
(resp.\ the closed convex cone spanned by $S$).
We say that subsets $S , S'$ in $ \R^n$ are $\Z$-\emph{affinely equivalent}
if there exists a $\Z$-affine automorphism of $\R^n$ which maps $S$ onto $S'$.

Let $P \subset \R^n$ be a lattice polytope of dimension $n$.
Then we can define the polarized toric variety associated to $P$ as
\[
(X_P,L_P)= (\Proj \C[\Gamma_P], \calo(1)),
\]
where $\Gamma_P:=\Cone(\{1\} \times P) \cap (\N \times \Z^n)$ is a subsemigroup of $\N \times \Z^n$.
We consider that $\Gamma_P$ is graded by $\N$,
that is,
the degree $k$ part of $\Gamma_P $ is $\Gamma_P \cap (\{k\} \times \Z^n)=\{k\} \times (kP \cap \Z^n) $. 
There exists a natural action on $X_P$ by the torus $(\*c)^n$.
We denote the maximal orbit in $X_P$ by $O_P = (\*c)^n.$
We regard $1_n \in (\*c)^n$ as a point in $X_P$ by $ (\*c)^n= O_P \subset X_P$.

By definition,
a lattice point $u \in P \cap \Z^n$ corresponds to a global section $x^u \in H^0(X_P,L_P)$.
It is well known that such global sections form a basis of $H^0(X_P,L_P)$
and the linear system $|L_P|$ is base point free.

\vspace{2mm}
Let $A \subset \Z^n$ be a finite set 
and assume that the lattice polytope $P :=\Conv(A) \subset \R^n$ is $n$-dimensional.
Then the linear system $| \bigoplus_{u \in A} \C x^u| \subset |L_P|$
defines a finite morphism $\nu : X_P \arw \P^{N}$ for $N = \# A -1$.
The image is nothing but $X_A \subset \P^N$.
If $\langle A - A \rangle =\Z^n$,
$\nu$ is the normalization of $X_A$.

\subsection{Refinement of a result in \cite{It}}


\begin{defn}\label{def of planes}
Let $(X,L)$ be a polarized variety
and let $r$ be a non-negative integer.
A subvariety $Z \subset X$ is called an $r$-\emph{plane}
if $(Z,L|_{Z}) $ is isomorphic to $ (\P^r, \calo_{\P^r} (1))$ as a polarized variety.
\end{defn}

We note that \autoref{def of planes} works even if $L$ is not very ample.
If $L=\calo_{\P^N}(1)|_X$ for an embedded projective variety $X \subset \P^N$,
a subvariety $Z \subset X$ is an $r$-plane if and only if $Z \subset \P^N$ is an $r$-dimensional linear projective subspace.

In this paper,
we consider $r$-planes in $(X_P,L_P)$ or $(X_A,\calo_{\P^N}(1)|_{X_A})$.
We say that a subvariety of $X_P$ or $X_A$ is an $r$-plane under these polarizations.

\vspace{2mm}
We can define Cayley structures of lattice polytopes similar to the case of $A \subset \Z^n$.

\begin{defn}\label{def_cayley_structure_polytope}
Let $P \subset \R^n$ be a lattice polytope of dimension $n$.
A surjective group homomorphism $\pi : \Z^n \arw \Z^r$ is called a \emph{Cayley structure} of $P$
if $\pi_{\R}(P)$ is $\Z$-affinely equivalent to $\Conv(0,e_1,\ldots,e_r)$,
where $\pi_{\R } : \R^n \arw \R^r$ is the induced $\R$-linear map.
\end{defn}

An equivalence relation and a partial order are defined by the same way as the case of $A \subset \Z^n$.
Cayley structures $\pi : \Z^n \arw \Z^r$ and $\pi' :\Z^n \arw \Z^r $ of $P$ are \emph{equivalent} if and only if 
there exists a group isomorphism $\iota : \Z^r \arw \Z^r$ such that $\pi = \iota \circ \pi'$.
For Cayley structures $\pi : \Z^n \arw \Z^r$ and $\pi' :\Z^n \arw \Z^{r'} $ of $P$,
$\pi$ is \emph{smaller than or equal} to $\pi'$ (written $\pi \preceq \pi'$)
if and only if
there exists a surjective group homomorphism $ \varpi : \Z^{r'} \arw \Z^r$
such that $\pi =\varpi \circ \pi'$.

Let $\pi : \Z^n \arw \Z^r$ be a Cayley structure of an $n$-dimensional lattice polytope $P \subset \R^n$. 
Then $\pi$ induces a surjective semigroup homomorphism $\Gamma_P \arw \Gamma_{\pi_{\R}(P)}$.
Hence we have an embedding $X_{\pi_{\R}(P)} \hookrightarrow X_P$ with $L_P |_{X_{\pi_{\R}(P)}} = L_{\pi_{\R}(P)} $.
Since $\pi_{\R}(P)$ is $\Z$-affinely equivalent to $\Conv(0,e_1,\ldots,e_r)$,
$X_{\pi(P)} \subset X_P$ is an $r$-plane.
The following result states that
conversely 
we can find such $\pi : \Z^n \arw \Z^r$ from an $r$-plane in $X_P$.

\begin{thm}[{\cite[Theorem 1.2]{It}}]\label{thm_CP}
Let $P \subset \R^n$ be a lattice polytope of dimension $n$
and let $r \leq n$ be a non-negative integer.
Then there exists a Cayley structure $\pi : \Z^n \arw \Z^{r}$ of $P$
if and only if 
there exists an $r$-plane $Z \subset X_P$ such that $1_n \in Z \cap O_P $.
\end{thm}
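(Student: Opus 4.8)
My plan is to prove the two implications separately; producing a plane from a projection is essentially the construction recalled just before the statement, and the real work is the reverse. Concretely, if $\pi\colon\Z^n\arw\Z^r$ is surjective with $\pi_\R(P)$ $\Z$-affinely equivalent to $\Conv(0,e_1,\ldots,e_r)$, then the induced surjective semigroup homomorphism $\Gamma_P\arw\Gamma_{\pi_\R(P)}$ gives a closed embedding $X_{\pi_\R(P)}\hookrightarrow X_P$ with $L_P|_{X_{\pi_\R(P)}}=L_{\pi_\R(P)}$; since $\pi_\R(P)\cong\Conv(0,e_1,\ldots,e_r)$ as lattice polytopes, $(X_{\pi_\R(P)},L_{\pi_\R(P)})\cong(\P^r,\calo_{\P^r}(1))$, so $Z:=X_{\pi_\R(P)}$ is an $r$-plane, and $1_n\in Z$ because the embedding respects maximal tori and $1_n$ is their common unit.

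So assume now that $Z\subset X_P$ is an $r$-plane with $1_n\in Z\cap O_P$; we may assume $r\ge 1$ (for $r=0$ the zero map works). The engine of the proof is a divisor computation on $Z\cong\P^r$. For a facet $F$ of $P$ let $D_F\subset X_P$ be the associated torus-invariant prime divisor, $n_F$ its primitive inner normal, and $h_F(u):=\langle u,n_F\rangle-\min_{y\in P}\langle y,n_F\rangle\ge 0$ for $u\in P\cap\Z^n$, so that $\operatorname{div}_{X_P}(x^u)=\sum_F h_F(u)\,D_F$. Since $Z$ meets $O_P$ it lies in no $D_F$ and no $\{x^u=0\}$; set $\Phi:=\{F: Z\cap D_F\ne\emptyset\}$, $H_F:=Z\cap D_F$ and $d_F:=\deg_{\P^r}H_F\ge 1$ for $F\in\Phi$. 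Restricting $\operatorname{div}_{X_P}(x^u)$ to $Z$ gives the divisor $\sum_{F\in\Phi}h_F(u)\,H_F$ on $\P^r$, which is the divisor of the nonzero section $x^u|_Z$ of $\calo_{\P^r}(1)$, hence a hyperplane; comparing degrees gives, for every $u\in P\cap\Z^n$,
\[
\sum_{F\in\Phi}h_F(u)\,d_F=1 .
\]
As all summands are non-negative integers, for each $u$ there is a unique $F(u)\in\Phi$ with $h_{F(u)}(u)=d_{F(u)}=1$, and $h_F(u)=0$ (i.e.\ $u\in F$) for the other $F\in\Phi$. From this I read off: every $d_F=1$, so each $H_F$ is a hyperplane of $\P^r$; every $F\in\Phi$ is $F(u)$ for some $u$ (else $P\subset F$); $x^u|_Z$ is a scalar multiple of the linear form $\ell_F$ cutting out $H_{F(u)}$; and, since the map $Z\arw\PN$ given by the $x^u$ pulls back $\calo_{\PN}(1)$ to $\calo_{\P^r}(1)$ with $r$-dimensional image and hence is a linear embedding, the $x^u|_Z$ span $H^0(\calo_{\P^r}(1))$, so $\{\ell_F\}_{F\in\Phi}$ spans it and $\#\Phi\ge r+1$.

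It remains to turn this into the projection. Put $\tilde\pi\colon\Z^n\arw\Z^{\Phi}$, $u\mapsto(\langle u,n_F\rangle)_{F\in\Phi}$, and $c:=(\min_{y\in P}\langle y,n_F\rangle)_{F\in\Phi}$. The previous step says exactly $\tilde\pi(P\cap\Z^n)=\{c+e_F: F\in\Phi\}$, so $\tilde\pi_\R(P)=c+\Conv(e_F:F\in\Phi)$ is a lattice translate of the standard $(\#\Phi-1)$-simplex; and, writing $e_F-e_{F'}=\tilde\pi(u-u')$ for suitable $u,u'$, one checks $\im(\tilde\pi)\cap\{\sum_F y_F=0\}=\Z^{\Phi}\cap\{\sum_F y_F=0\}$, so $\tilde\pi_\R(P)$ is unimodular for $M:=\im(\tilde\pi)$. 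Then I choose a surjective group homomorphism $q\colon M\arw\Z^r$ collapsing $\#\Phi-r-1$ of the vertices $c+e_F$ to one point and sending the remaining $r+1$ to the vertices of a unimodular $r$-simplex --- possible by the unimodularity together with $\#\Phi\ge r+1$, extending arbitrarily over the free group $M/(\Z^{\Phi}\cap\{\sum_F y_F=0\})$ --- and set $\pi:=q\circ\tilde\pi$. Then $\pi\colon\Z^n\arw\Z^r$ is surjective and $\pi_\R(P)=q_\R(\tilde\pi_\R(P))$ is a lattice translate of $\Conv(0,e_1,\ldots,e_r)$, which completes the proof.

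The crux should be the divisor identity $\sum_{F\in\Phi}h_F(u)\,d_F=1$: it is precisely what promotes the purely existential hypothesis ``$X_P$ contains a linear $\P^r$ through a torus point'' to the rigid combinatorics --- each $H_F$ a hyperplane and $P\cap\Z^n$ partitioned by the facets $F(u)$ --- out of which $\pi$ is assembled. The lattice bookkeeping at the end is routine but must be handled with some care, because $\#\Phi$ may be strictly larger than $r+1$ and $\im(\tilde\pi)$ need not be all of $\Z^{\Phi}$.
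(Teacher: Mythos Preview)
Your overall strategy---restrict torus-invariant divisors to $Z$, read off hyperplanes, and assemble $\pi$ from the combinatorics---is close in spirit to the paper's, but the key divisor identity has a genuine gap. You assert that restricting $\operatorname{div}_{X_P}(x^u)=\sum_F h_F(u)\,D_F$ to $Z$ yields $\sum_{F\in\Phi}h_F(u)\,H_F$ with $H_F=D_F\cap Z$, and then take degrees. The trouble is that the torus-invariant prime divisors $D_F$ need not be Cartier, or even $\Q$-Cartier, and scheme-theoretic restriction to $Z$ is not additive through the non-$\Q$-factorial locus. Concretely, take $P=\Conv(0,e_1,e_2,e_1+e_2,e_3)\subset\R^3$, so that $X_P\subset\P^4$ is the projective cone over the Segre quadric with apex $[0{:}0{:}0{:}0{:}1]$, and let $Z=\{[s{:}s{:}s{:}s{:}t]\}$ be the $1$-plane coming from $(a,b,c)\mapsto c$. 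All five facets meet $Z$ and each $D_F\cap Z$ is a single reduced point, so every $d_F=1$; but for $u=0$ one computes $h_{F_3}(0)=h_{F_4}(0)=1$ and $h_F(0)=0$ otherwise, giving $\sum_F h_F(0)\,d_F=2$, whereas $D_0|_Z=\{X_0=0\}|_Z$ has degree $1$. Here $D_{F_3},D_{F_4}$ are not $\Q$-Cartier at the apex, and $(D_{F_3}+D_{F_4})|_Z\neq D_{F_3}|_Z+D_{F_4}|_Z$. Your conclusion that each $u$ lies off exactly one facet in $\Phi$ therefore fails.

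The paper sidesteps this by never decomposing into the $D_F$: it works directly with the Cartier divisors $D_u$ for $u\in P\cap\Z^n$, observes that each $D_u|_Z$ is a hyperplane of $Z$, lets $H_0,\dots,H_{r'}$ be the distinct hyperplanes that occur, partitions $P\cap\Z^n$ into the sets $A^i=\{u:D_u|_Z=H_i\}$, and builds $\pi$ from this partition (then a further projection collapses $r'-r$ classes, just as your map $q$ does). Your argument is easily repaired along the same lines: replace ``$F(u)$'' by ``the hyperplane $D_u|_Z$'', drop any reference to individual $D_F$, and the rest of your construction of $\tilde\pi$ and $q$ goes through unchanged.
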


To prove \autoref{thm_structure},
we want a projection $\pi$ such that $X_{\pi_{\R}(P)}$ \emph{contains} $Z$
as we explained in Introduction.
However,
the above theorem does not give such $\pi$.
Roughly this is because we construct $\pi$ so that $Z$ degenerates to $X_{\pi_{\R}(P)}$ by choosing a monomial order of $\N^r$ in the proof,
although the degeneration does not appear explicitly.
In particular, the $r$-plane $X_{\pi_{\R}(P)}$ constructed in \cite{It} depends on the monomial order.
The following theorem gives a refinement of \autoref{thm_CP}.

\begin{thm}\label{thm_plane_ample}
Let $P \subset \R^n$ be a lattice polytope of dimension $n$.
Let $Z \subset X_P$ be an $l$-plane with $1_n \in Z \cap O_P$ for $l \geq 0$.
Then there exists a Cayley structure $\pi : \Z^n \arw \Z^{r}$ of $P$ for some $r \geq 0$ such that
\begin{itemize}
\item[(i)] the $r$-plane $X_{\pi_{\R}(P)} \subset X_P$ contains $Z $,
\item[(ii)] if a Cayley structure $\pi' : \Z^n \arw \Z^{r'}$ of $P$ also satisfies (i),
then $\pi \preceq \pi'$ holds.
\end{itemize}
\end{thm}

\begin{rem}\label{rem_unique}
By (i), we have $r \geq l$.
Hence \autoref{thm_CP} follows from \autoref{thm_plane_ample}.

By (ii),
such Cayley structure $\pi : \Z^n \arw \Z^{r}$ is uniquely defined by $P $ and $Z$ up to equivalence.
In other words,
the $r$-plane $X_{\pi_{\R}(P)} \subset X_P$ is uniquely determined by $P $ and $Z$.
\end{rem}

\begin{proof}[Proof of \autoref{thm_plane_ample}]

\textbf{Step 1.}
Without loss of generality,
we may assume that $0 \in \Z^n$ is a vertex of $P$. 
Let $x^u \in H^0(X_P,L_P)$ be the section corresponding to $u \in P \cap \Z^n$,
and let $D_u \in |L_P|$ be the corresponding divisor.
Since $1_n$ is contained in $Z$ and not contained in $D_u$,
we have $Z \nsubset  D_u$.
Hence $D_u |_Z$ is a divisor on $Z$.
Since $Z$ is an $l$-plane, 
$D_u |_Z$ is a hyperplane on $Z \simeq \P^{l}$.


Consider the following set of hyperplanes on $Z$
\[
\mathscr H = \big\{ H \subset Z \, \big| \, H = D_u |_Z \text{ for some } u \in P \cap \Z^n \big\}.
\]
Let $H_0,\ldots,H_r $ be all the members of $\mathscr H$ for $r= \# \mathscr H -1$.
Since $\bigcup_{u \in P \cap \Z^n} \Supp D_u = X_P \setminus O_P$,
we have $\bigcup_{i=0}^r H_i = Z \setminus (Z \cap O_P) $.

We can decompose $P \cap \Z^n $ into the disjoint union of $ A^0, A^1 \cdots, A^r$ by
\[
A^i := \big\{ u \in P \cap \Z^n \, \big| \, D_u |_Z = H_i \subset Z \big\}.
\]
We may assume that $0 \in P \cap \Z^n $ is contained in $ A^0$.
We note that each $A^i$ is not empty.
In the following steps,
we will construct $\pi : \Z^n \arw \Z^r$ which contracts each $A^i$ to a point.
\vspace{3mm}

\noindent
\textbf{Step 2.}
We define a graded semigroup homomorphism
\[
\beta : \Gamma_{P} = \Cone (\{1\} \times P) \cap (\N \times \Z^n) \arw \N \times  \N^r
\]
as follows.
For $(k,u) \in \Gamma_P$,
we denote by $x^{(k,u)} \in H^0(X_P, k L_P)$ the corresponding section.
Hence we have the corresponding divisor $D_{(k,u)} \in |k L_P|$ on $X_P$.
Since $\Supp D_{(k,u)} \subset X_P \setminus O_P$,
$\Supp D_{(k,u)} |_Z $ is contained in $ Z \setminus (Z \cap O_P) =\bigcup_{i=0}^r H_i $.
Thus we can write
\[
D_{(k,u)} |_Z = a_0 H_0 + \cdots + a_r H_r \in \big| k L_P |_Z \big| \simeq |\calo_{\P^l}(k)| 
\]
uniquely for some $a_i \in \N$ with $\sum_{i=0}^r a_i =k$.
We define 
\begin{align}\label{eq_def_beta}
\beta (k,u) =( k, a_1 , \ldots, a_r) \in \N \times \N^r.
\end{align}
We note that we do not take  $a_0$ in the definition of $\beta$.
For $(k,u) , (k',u')\in \Gamma_P$,
we have 
\[
x^{(k,u)} \cdot  x^{(k',u')} = x^{(k+k',u+u')} \in H^0(X_P, (k+k') L_P).
\]
Hence $D_{(k+k',u+u')} |_Z = D_{(k,u)} |_Z  +D_{(k',u')} |_Z $ holds.
Thus $\beta$ is a semigroup homomorphism.

Set $\Delta_r = \Conv (0,e_1,\ldots,e_r) \subset \R^r$.
Since $a_i \in \N$ and $a_1+ \cdots + a_r \leq k$ for $a_i$ in \ref{eq_def_beta},
we have $\beta(\Gamma_P) \subset \Gamma_{\Delta_r} = \Cone (\{1\} \times \Delta_r) \cap (\N \times \Z^r)$.

For $u \in P \cap \Z^n$,
the divisor $D_{(1,u)}$ is nothing but $D_u$ in Step 1.
Hence we have $D_{(1,u)} |_Z =H_i$ as divisors for $u \in A^i$.
Thus
\begin{align}\label{eq_beta(1,u)}
\beta (1,u)=
\left\{
\begin{array}{ll}
(1,0) &  \mbox{if \ $u \in A^0$}  \ \\
(1,e_i) &  \mbox{if \ $u \in A^i$ \ for \ $1 \leq i \leq r$}. \  
\end{array}
\right .
\end{align}
In particular,
we have $\beta (\Gamma_P) = \Gamma_{\Delta_r}$ since $ \Gamma_{\Delta_r} $ is generated by $(1,0) ,(1,e_1), \ldots, (1,e_r)$ as a semigroup. 
\vspace{3mm}

\noindent
\textbf{Step 3.}
Since $\beta$ is a semigroup homomorphism and $\Gamma_P \subset \N \times \Z^n$ spans $\Z \times \Z^n$ as a group,
$\beta $ uniquely extends to a group homomorphism $\Z \times \Z^n  \arw \Z \times \Z^r$.
We denote the group homomorphism by the same letter $\beta$.
We define a $\Z$-affine homomorphism $\pi : \Z^n \arw \Z^r$ by 
\[
\pi : \Z^n \simeq \{1\} \times \Z^n \subset \Z \times \Z^n \stackrel{\beta}{\longrightarrow} \Z \times \Z^r \arw \Z^r,
\]
where the last homomorphism is the projection to the second factor.
Then
\begin{align}\label{eq_pi(u))}
\pi(u)=
\left\{
\begin{array}{ll}
0 &  \mbox{if \ $u \in A^0$}  \ \\
e_i &  \mbox{if \ $u \in A^i$ \ for \ $1 \leq i \leq r$} \  
\end{array}
\right .
\end{align}
by \ref{eq_beta(1,u)}.
In particular,
$\pi(0)=0 \in \Z^r$ holds since $0 \in A^0 $.
Hence $\pi$ is a surjective group homomorphism. 
Since $\pi_{\R} (P) = \Delta_{r}$ by \ref{eq_pi(u))},
$\pi$ is a Cayley structure of $P$.
The embedding $X_{\pi_{\R}(P)} \subset X_P$ is induced by the graded $\C$-algebra homomorphism 
\[
\Psi : \C [\Gamma_P] \arw \C[\Gamma_{\Delta_r}] \ \  : \ \ x^{(k,u)} \mapsto x^{\beta(k,u)}.
\]

\vspace{2mm}

\noindent
\textbf{Step 4.}
To show (i),
it suffices to see that 
the natural homomorphism
\begin{align}\label{eq_section_ring}
\C [\Gamma_P] = \bigoplus_{k \in \N} H^0(X_P, k L_P) \arw  \bigoplus_{k \in \N} H^0(Z, k L_P |_Z)
\end{align}
factors through $\Psi$.
Since $\Psi$ is induced by $\beta$,
the kernel of $ \Psi$ is generated by 
\[
\{ x^{(k,u)} -  x^{(k,u')} \, | \,  (k,u), (k,u') \in \Gamma_P, \beta(k,u) = \beta(k,u') \} .
\]
By the following claim,
$\ker \Psi$ is contained in the kernel of \ref{eq_section_ring},
which implies $Z \subset X_{\pi_{\R}(P)}$.

\begin{claim}\label{claim_kernel}
If $\beta(k,u) = \beta(k,u')$ for $(k,u), (k,u') \in \Gamma_P$,
it holds that $x^{(k,u)} |_Z = x^{(k,u')} |_Z  \in H^0(Z, k L_P |_Z) \simeq H^0(\P^l,\calo(k))$.
\end{claim}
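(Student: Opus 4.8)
\textbf{Proof proposal for \autoref{claim_kernel}.}
The plan is to translate the equality $\beta(k,u)=\beta(k,u')$ into an equality of divisors on $Z\simeq\P^l$, then remove the remaining scalar ambiguity by evaluating at the torus point $1_n$. First I would note that in the expansion $D_{(k,u)}|_Z=a_0H_0+\cdots+a_rH_r$ the coefficient $a_0$ is recovered from $\beta(k,u)=(k,a_1,\ldots,a_r)$ as $a_0=k-\sum_{i=1}^r a_i$; hence $\beta(k,u)=\beta(k,u')$ already forces $D_{(k,u)}|_Z=D_{(k,u')}|_Z$ as effective divisors on $Z$. Since $1_n\in Z\cap O_P$ and $\Supp D_{(k,u)}\subset X_P\setminus O_P$, the restriction $x^{(k,u)}|_Z$ is a \emph{nonzero} section of $\calo_{\P^l}(k)$ (under the identification $H^0(Z,kL_P|_Z)\simeq H^0(\P^l,\calo(k))$), and likewise for $u'$.

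Next, pick for each $i$ a linear form $\ell_i\in H^0(\P^l,\calo(1))$ cutting out $H_i$ (unique up to a nonzero scalar). A nonzero section of $\calo_{\P^l}(k)$ with zero divisor $\sum_{i=0}^r a_iH_i$ is a constant multiple of $\prod_i\ell_i^{a_i}$, because the quotient of the two is a rational function on $\P^l$ with neither zeros nor poles, hence constant. Therefore I may write $x^{(k,u)}|_Z=c\prod_i\ell_i^{a_i}$ and $x^{(k,u')}|_Z=c'\prod_i\ell_i^{a_i}$ with the \emph{same} exponents and with $c,c'\in\C^\times$. It remains to prove $c=c'$. For this, consider the rational function $q:=x^{(k,u)}/x^{(k,u')}$ on $X_P$. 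On the maximal torus $O_P=(\*c)^n$ it is the character $t\mapsto t^{u-u'}$, so $q(1_n)=1$. On the other hand, $Z\not\subset\Supp D_{(k,u')}$, so $q$ restricts to $Z$, and there $q|_Z=(c\prod_i\ell_i^{a_i})/(c'\prod_i\ell_i^{a_i})=c/c'$ is constant. Evaluating at $1_n\in Z\cap O_P$ gives $c/c'=q(1_n)=1$, whence $x^{(k,u)}|_Z=x^{(k,u')}|_Z$, as claimed.

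I do not expect a genuine obstacle here: the argument is forced once one notices that $\beta$ records exactly the divisor $D_{(k,u)}|_Z$ (the omitted coordinate $a_0$ being redundant given $k$). The only points requiring care are the bookkeeping ones — checking that $x^{(k,u)}|_Z\neq 0$ so that the sections are honestly proportional, that $q|_Z$ is legitimately defined, and that under $H^0(Z,kL_P|_Z)\simeq H^0(\P^l,\calo(k))$ the restriction $D_{(k,u)}|_Z$ is the divisor $\sum_i a_iH_i$ appearing in the definition of $\beta$ — and the normalization check that $x^{(k,u)}/x^{(k,u')}$ restricted to the torus is the monomial $t^{u-u'}$, which takes the value $1$ at $1_n$.
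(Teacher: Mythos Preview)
Your proof is correct and follows essentially the same approach as the paper: translate $\beta(k,u)=\beta(k,u')$ into equality of the restricted divisors, deduce that the two restricted sections are proportional, and then pin down the scalar by evaluating at $1_n\in Z\cap O_P$. The paper compresses the last step into ``by substituting $1_n$ into the equality $x^{(k,u)}|_Z=a\cdot x^{(k,u')}|_Z$ we get $a=1$'', whereas you spell this out via the rational function $q=x^{(k,u)}/x^{(k,u')}=t^{u-u'}$ on the torus; your version is slightly more explicit but the content is identical.
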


\begin{proof}
For $(k,u) \in \Gamma_P$, we can write
\[
D_{(k,u)} |_Z = a_0 H_0 + \cdots + a_r H_r \in \big| L_P |_Z \big| \simeq |\calo_{\P^l}(k)| 
\]
for some $a_i \in \N$ with $\sum_{i=0}^r a_i =k$ by Step 2.
Hence $\beta(k,u) = \beta(k,u')$ holds if and only if
$D_{(k,u)} |_Z  = D_{(k,u')} |_Z $ holds as divisors on $Z$.
Thus we have $x^{(k,u)} |_Z = a \cdot x^{(k,u')} |_Z $ for some $a \in \*c$.
By substituting $1_n \in Z \cap O_P$ to this equality,
we have $a =1 $.
\end{proof}
\vspace{3mm}

\noindent
\textbf{Step 5.}
To show (ii),
take another Cayley structure $\pi' : \Z^n \arw \Z^{r'}$ of $P$ such that
 $Z \subset X_{\pi'_{\R}(P)} \subset X_P$.

Since $\pi'_{\R}(P)$ is $\Z$-affinely equivalent to $\Delta_{r'}$,
$(\ker \pi')_{\R}=\ker (\pi'_{\R})$ is generated by 
\[
\{ u-u' \, | \, u,u' \in P \cap \Z^n, \pi'(u)= \pi'(u') \}
\]
as an $\R$-vector space.
If $\pi'(u)= \pi'(u') $ for $u,u' \in P \cap \Z^n$,
we have 
\[
x^u |_{ X_{\pi'_{\R}(P)} } = x^{u'} |_{ X_{\pi'_{\R}(P)} } \in H^0( X_{\pi'_{\R}(P)} , L_P |_{ X_{\pi'_{\R}(P)} }) = H^0( X_{\pi'_{\R}(P)} , L_{ \pi'_{\R}(P)} ).
\]
Since $Z \subset X_{\pi'_{\R}(P)}$,
$x^u |_Z = x^{u'} |_Z \in H^0( Z , L_P |_Z)$ holds,
and hence $ D_u |_Z = D_{u'} |_Z$.
By the definition of $A^i$'s and \ref{eq_pi(u))},
we have $\pi(u) = \pi(u')$,
i.e.,
$u-u' \in \ker \pi$.
Hence $\ker \pi' \subset \ker \pi$,
which implies that $\pi$ factors through $\pi'$.
\end{proof}

\begin{lem}\label{P-rem_minimality}
Let $\pi : \Z^n \arw \Z^r$ be the Cayley structure of $P$ in the statement of \autoref{thm_plane_ample}
and let $\overline{H}_0, \overline{H}_1, \dots, \overline{H}_r \subset X_{\pi_{\R}(P)} \simeq \P^r$
be the torus invariant hyperplanes.
Then we have $\overline{H}_i |_Z \neq \overline{H}_j |_Z$ for any $i \neq j$ as hyperplanes on $Z$.
\end{lem}
\begin{proof}
  We use the notation in the proof of \autoref{thm_plane_ample}.
  By renumbering the indexes,
  we may assume that $\overline{H}_i$ is the hyperplane corresponding to $\pi(A^i) \in \pi_{\R}(P) \cap \Z^r = \{0,e_1,\dots,e_r\} $.
Then we have
  $\overline{H}_i  = D_u |_{X_{\pi_{\R}(P)}}$ for $u \in A^i$.
  Since $\overline{H}_i |_Z = D_u |_Z =H_i $ for $u \in A^i$,
  the assertion follows.
\end{proof}

As a corollary of \autoref{thm_plane_ample},
we have a similar statement for $X_A \subset \P^N$.

\begin{cor}\label{thm_plane_embedding}
Let $A \subset \Z^n$ be a finite subset with $\langle A -A \rangle=\Z^n$.
Let $Z \subset X_A $ be an $l$-plane with $1_{n} \in Z $ for $l \geq 0$.
Then there exists a Cayley structure $\pi : \Z^n \arw \Z^{r}$ of $A$ for some $r \geq 0$ such that
\begin{itemize}
\item[(i)] the $r$-plane $X_{\pi(A)} \subset X_A$ contains $Z $,
\item[(ii)] if a Cayley structure $\pi' : \Z^n \arw \Z^{r'}$ of $A$ also satisfies (i),
then $\pi \preceq \pi'$ holds.
\end{itemize}
\end{cor}

\begin{proof}
Set $P = \Conv(A) \subset \R^n$.
Then we have the normalization $\nu : X_P \arw X_A$ such that $\nu^* \calo_{X_A}(1) =L_P$.
Let $Z' \subset X_P$ be the strict transform of $Z \subset X_A$.
Since $\nu$ is an isomorphism over $(\*c)^n \subset X_A$ and $Z \cap (\*c)^n \neq \emptyset$,
$\nu |_{Z'} : Z' \arw Z$ is birational and finite.
Since $Z \simeq \P^l$ is normal, $\nu |_{Z'} $ is an isomorphism by Zariski's main theorem.
Hence $1_n \in Z' \subset X_P$ is an $l$-plane.
Thus we can apply \autoref{thm_plane_ample} to $P, Z'$
and obtain $\pi : \Z^n \arw \Z^r$,
which is a Cayley structure of $A$ and satisfies (i), (ii).
\end{proof}

\begin{rem}\label{rem_IZ}
In \cite[Section 5]{IZ},
Ilten and Zotine prove \autoref{thm_plane_embedding} independently,
as a key step to describe the Fano schemes of $X_A$.
We note that they also consider $l$-planes in $X_{A}\setminus (\*c)^n$.
\end{rem}

We also have the following lemma in a similar way to \autoref{P-rem_minimality}.
This will be used in the proof of \autoref{thm_structure}.

\begin{lem}\label{rem_minimality}
  Let $\pi : \Z^n \arw \Z^r$ be the Cayley structure of $A$ in the statement of \autoref{thm_plane_embedding}
and let $\overline{H}_0, \overline{H}_1, \dots, \overline{H}_r \subset X_{\pi(A)} \simeq \P^r$
be the torus invariant hyperplanes.
Then we have $\overline{H}_i |_Z \neq \overline{H}_j |_Z$ for any $i \neq j$ as hyperplanes on $Z$.
\end{lem}

\begin{ex}\label{ex_line_in_P^2}
Let $A=\{0,e_1,e_2\} \subset \Z^2$.
Then we have homogeneous coordinates $X_0,X_1,X_2 \in H^0(X_A,\calo_{X_A}(1))$ on $X_A=\P^2$ 
corresponding to $0,e_1,e_2$.
In this case,
the $1$-plane $X_{\pi(A)} \subset X_A$
induced from some Cayley structure $\pi :\Z^2 \arw \Z^1$ of $A$
is one of the following:
\begin{align} \label{eq_3lines}
 \Zero(X_0-X_1), \quad \Zero(X_0-X_2), \quad  \Zero(X_1-X_2)  \quad \subset \quad  \P^2.
\end{align}

Consider $1$-planes $Z_1,Z_2$ in $ X_A=\P^2$ containing $[1:1:1] $ defined by
\[
Z_1=\Zero(X_1-X_2), \quad Z_2= \Zero (2X_0 - X_1-X_2).
\]
Since $Z_1=\Zero(X_1-X_2)$ coincides with the $1$-plane $X_{\pi(A)} \subset X_A$
induced by the Cayley structure
\[
\pi : \Z^2 \arw \Z^1 \ \ : \ \  e_1 \mapsto 1 , e_2 \mapsto 1,
\]
this $\pi$ satisfies (i), (ii) in \autoref{thm_plane_embedding} for $Z_1$.

On the other hand,
$Z_2$ does not coincide with any $1$-plane in \ref{eq_3lines}. 
Hence 
$\id_{\Z^2} : \Z^2 \arw \Z^2$ satisfies (i), (ii) in \autoref{thm_plane_embedding} for $Z_2$.
\end{ex}

\section{Cayley sum and Contact locus}\label{sec_contact_locus}

\begin{defn}\label{def_Z_H}
Let $A \subset \ZZ^n$ be a finite subset with $\langle A-A\rangle=\Z^n$ and let $H \subset \P^N$ be a hyperplane.
We set
\[
Z_{A,H}^{\circ} := \{ x \in (\*c)^n  \, | \,  H  \text{ is tangent to } X_A  \text{ at } x \}  \subset (\*c)^n \subset  X_A
\]
and let $Z_{A,H} \subset X_A$ be the closure of $Z_{A,H}^{\circ}$.
We call $ Z_{A,H} $ the \emph{contact locus} of $H$ on $X_A$.
\end{defn}

It is known that
$Z_{A,H} \subset X_A \subset \P^N$ is a $\delta_{X_A} $-plane 
if $H \in (\P^{N})^{\vee}$ is a general point of $X_A^*$ (see \cite[Theorem 1.18]{Te} for example).
Because of the torus action on $X_A$,
$Z_{A,H} $ is a $\delta_{X_A} $-plane if $H$ is a general hyperplane which is tangent to $X_A$ at $1_n$.

\vspace{4mm}
In this section,
we fix finite sets $A_0,\ldots,A_r \subset \Z^{n-r} $
and let $A$ be the Cayley sum $A_0 * \cdots * A_r \subset \Z^{n-r} \times \Z^r$. We assume $\langle A-A \rangle =\Z^{n-r} \times \Z^r$.

By identifying  $(\*c)^r $ with $\{1_{n-r}\} \times (\*c)^r $,
we regard $ (\*c)^r $ as a subtorus of $ (\*c)^{n-r} \times (\*c)^r $.
Then the closure of $ (\*c)^r  $ in $ X_A$
is nothing but the $r$-plane $X_{\pi(A)} \subset X_A$
induced by the projection $\pi : \Z^{n-r} \times \Z^{r} \arw \Z^r$ to the second factor.
Our aim is to describe $Z_{A,H}^{\circ}  \cap (\*c)^r$.

\vspace{2mm}

Write $A_i= \{u_{i0}, \ldots,u_{iN_i}\} \subset \Z^{n-r}$ for $ N_i= \# A_i -1$,
and set $N=\# A -1 = \sum_{i=0}^r (N_i+1) -1$.
Let $\{ X_{ij}\}_{0 \leq i \leq r, 0 \leq j \leq N_i}$ be the homogeneous coordinates on $ \PP^{N}$,
and let
$z_1,\ldots,z_{n-r},w_1,\ldots,w_r $ be the coordinates of $ (\*c)^{n-r} \times  (\*c)^r $.
Then the embedding $\phi_A :  (\*c)^{n-r} \times  (\*c)^r \hookrightarrow \PN$
is defined by $(z_1,\ldots,z_{n-r},w_1,\ldots,w_r) \mapsto [w_iz^{u_{ij}}]_{0 \leq i \leq r, 0 \leq j \leq N_i}$,
where we set $w_0 = 1 \in \CC$.
Let us take a linear form
\[
\sum_{0 \leq i \leq r, 0 \leq j \leq N_i} a_{ij} X_{ij}
\ \ \text{ with }\ \, a_{ij} \in \C,
\]
and consider the hyperplane $H \subset \PN$ defined by this linear form.

The following lemma 
is well known. See
\cite[Proposition 4.1]{DFS} for the proof.


\begin{lem}\label{thm:tan-1-w}
  Let $H$ be as above and take $w \in   (\*c)^r$. Then $H$ is tangent to $X_A $
  at $(1_{n-r},w) \in  (\*c)^{n-r} \times  (\*c)^r
  $ 
  if and only if
  $(a_{ij})_{i,j} \in \C^{N+1} $
  satisfies the condition
  \begin{equation}\label{eq_condition}
    \begin{aligned}
      \sum_{0 \leq j \leq N_i} a_{ij}    &=0 \in \C \quad \text{for} \quad  0 \leq i \leq r,\\
      \sum_{0 \leq i \leq r} w_i \sum_{0 \leq j \leq N_i} a_{ij}    u_{ij} &=0 \in (\Z^{n-r})_{\C}= \C^{n-r}.
    \end{aligned}
  \end{equation}  
In particular,
  $H$ is tangent to $X_A$ at $(1_{n-r},1_r) $ if and only if 
  \begin{equation}\label{eq:cond-tan-1-1}
\sum_{0 \leq j \leq N_i} a_{ij}    =0 \in \C \,\ \text{ for }\ 0 \leq i \leq r, \quad \sum_{0 \leq i \leq r}  \sum_{0 \leq j \leq N_i} a_{ij}    u_{ij} =0 \in \C^{n-r}.   
  \end{equation}
\end{lem}

\vspace{4mm}
Now we assume that the hyperplane
$H$ is tangent to $X_A$ at $(1_{n-r},1_r)$,
i.e., $(a_{ij})_{i,j}$ satisfies \ref{eq:cond-tan-1-1}.
We set
\[
m_i = \sum_{0 \leq j \leq N_i} a_{ij}u_{ij} \in \CC^{n-r}
\]
and consider
the linear map
\[
\xi : \C^{r+1} \arw \C^{n-r} \quad : \quad
 (W_0,\ldots,W_r ) \mapsto \sum_{0 \leq i \leq r} W_i m_i .
\]
Note that
for $\P^r = \P_{\! *} (\C^{r+1})
  =  (\C^{r+1} \setminus \{0\}) / \*c$,
the torus $(\C^{\times})^{r} $ is embedded in $X_{\pi(A)} = \P^r$
as an open dense subset by $(w_1, \dots, w_r) \mapsto [1:w_1:\cdots:w_r]$.

We give a description of the intersection of the torus $(\C^{\times})^{r}$
and the open subset of the contact locus $Z_{A,H}^{\circ} = (\C^{\times})^{n} \cap Z_{A,H}$ defined in \autoref{def_Z_H}.
\begin{lem}\label{lem_tangent_dim}
  Assume that $H$ is tangent to $X_A$ at $(1_{n-r},1_r) $.
  Then
  \begin{enumerate}
  \item $Z_{A,H}^{\circ} \cap (\*c)^r =\{ w = (w_1, \dots, w_r) \in (\C^{\times})^{r} \, | \,  \sum_{0 \leq i \leq r } w_i m_i = 0 \in \C^{n-r} \}$
    and the closure is equal to
    $\P_{\! *} (\ker \xi)
    \subset \P_{\! *} (\C^{r+1}) =X_{\pi(A)} \subset X_A$,
    where $w_0 := 1$.
  \item $\dim Z_{A,H}^{\circ} \cap (\*c)^r  = r -  \dim \langle m_0, \ldots, m_r  \rangle_{\C}$,
    where $\langle m_0, \ldots, m_r  \rangle_{\C} \subset \C^{n-r} $ is the subspace spanned by $m_i$'s.
  \end{enumerate}
\end{lem}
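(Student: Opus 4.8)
The plan is to extract everything from the tangency criterion \autoref{thm:tan-1-w}. The first step is the observation that the equations $\sum_{0 \le j \le N_i} a_{ij} = 0$ ($0 \le i \le r$) in \ref{eq_condition} do not involve $w$, and that they already hold: by hypothesis $H$ is tangent to $X_A$ at $(1_{n-r},1_r)$, so \autoref{thm:tan-1-1} (the case $w = 1_r$) gives these equalities. Consequently, for $w = (w_1,\dots,w_r) \in (\*c)^r$, \autoref{thm:tan-1-w} says that $(1_{n-r},w) \in Z_{A,H}^\circ$ if and only if the single vector equation $\sum_{0 \le i \le r} w_i m_i = 0 \in \C^{n-r}$ holds with $w_0 := 1$. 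This is exactly the description of $Z_{A,H}^\circ \cap (\*c)^r$ claimed in (1).

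Next I would transport this description into $\P^r = \P_{\! *}(\C^{r+1})$. Under the chart $(w_1,\dots,w_r) \mapsto [1:w_1:\cdots:w_r]$, the open torus of $\P^r$ is the set of points $[W_0:\cdots:W_r]$ with all $W_i \ne 0$, and on it $w_i = W_i/W_0$; homogenizing $\sum_i w_i m_i = 0$ to $\sum_i W_i m_i = 0$ identifies $Z_{A,H}^\circ \cap (\*c)^r$ with the set of torus points of the linear subspace $\P_{\! *}(\ker \xi) \subset \P^r$. It remains to see that the closure of this set (inside $\P^r$, equivalently inside $X_A$ since $\P^r = X_{\pi(A)}$ is closed there) is all of $\P_{\! *}(\ker \xi)$. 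The key point is that $[1:1:\cdots:1] \in \P_{\! *}(\ker \xi)$, because $1_r \in Z_{A,H}^\circ \cap (\*c)^r$ by hypothesis (equivalently, $\sum_i m_i = \sum_{i,j} a_{ij} u_{ij} = 0$ by \autoref{thm:tan-1-1}); hence $\P_{\! *}(\ker\xi)$ is contained in none of the coordinate hyperplanes $\{W_i = 0\}$. Since $\P_{\! *}(\ker\xi)$ is a nonempty, hence irreducible, projective space, removing the finitely many proper closed subspaces $\P_{\! *}(\ker\xi) \cap \{W_i = 0\}$ leaves a dense open subset of $\P_{\! *}(\ker\xi)$, which is precisely $Z_{A,H}^\circ \cap (\*c)^r$. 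This proves (1).

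Finally, (2) is a dimension count on the linear space: $\dim \bigl(Z_{A,H}^\circ \cap (\*c)^r\bigr) = \dim \P_{\! *}(\ker \xi) = \dim_{\C} \ker \xi - 1 = (r+1) - \dim_{\C} \im \xi - 1 = r - \dim \langle m_0, \ldots, m_r \rangle_{\C}$, where the last equality uses $\im \xi = \langle m_0,\ldots,m_r\rangle_{\C}$, which is immediate from the definition of $\xi$. I do not anticipate a genuine obstacle; the only place demanding a little care is the density claim, where one must invoke the distinguished torus point $[1:\cdots:1]$ to ensure that passing to the closure recovers the whole of $\P_{\! *}(\ker\xi)$ rather than a proper subspace.
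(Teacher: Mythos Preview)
Your proof is correct and follows essentially the same approach as the paper: both derive the set description from \autoref{thm:tan-1-w} after noting that the conditions $\sum_j a_{ij}=0$ are already forced by \autoref{thm:tan-1-1}, then use that $1_r$ lies in $\P_{\! *}(\ker\xi)\cap(\*c)^r$ to conclude this intersection is a nonempty open (hence dense) subset of the irreducible space $\P_{\! *}(\ker\xi)$, and finish with the rank--nullity count for $\xi$.
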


\begin{proof}
  (1) The first equality follows from \ref{eq_condition}
  since $\sum_j a_{ij} = 0$ already follows from \ref{eq:cond-tan-1-1}.
  By the definition of $\xi$, we have 
  \begin{align}\label{eq_P_cap_T}
    \Big\{ w \in (\C^{\times})^{r} \, \big| \,  \sum_{0 \leq i \leq r } w_i m_i = 0 \in \C^{n-r} \Big\} = \P_{\! *} (\ker \xi) \cap  (\C^{\times})^{r}.
  \end{align}   
  Since $1_r \in  (\C^{\times})^{r} $ is contained in the left hand side of \ref{eq_P_cap_T} by assumption,
  $ \P_{\! *} (\ker \xi) \cap  (\C^{\times})^{r}$ is a non-empty open subset of $\P_{\! *} (\ker \xi)$,
  which is dense in $\P_{\! *} (\ker \xi)$.

\vspace{2mm}

\noindent
  (2)
  By the definition of $\xi$,  $\rank \xi  = \dim \langle m_0, \ldots, m_r  \rangle_{\C}$ holds. 
  Hence we have
  \begin{align*}
    \dim Z_{A,H}^{\circ} \cap (\*c)^r  &= \dim  \P_{\! *} (\ker \xi) \\
    &= \dim \ker \xi -1 \\
    &= r+1 - \rank \xi -1 = r -  \dim \langle m_0, \ldots, m_r  \rangle_{\C}.
  \end{align*}
  by (1).
\end{proof}

In order to clarify the description of the dimension of
$Z_{A,H}^{\circ} \cap (\*c)^r$ for \emph{general} $H$,
we introduce an invariant $\alpha$ 
as follows.

\begin{defn}\label{def_alpha2}
Let $V$ be a vector space of finite dimension over a field $k=\Q$ or $\C$,
and let $V_0,\ldots,V_r \subset V$ be subspaces.
We set $K$ to be the kernel of the linear map
\[
V_0 \oplus \cdots \oplus V_r \arw V \  :  \  (m_0,\ldots,m_r) \mapsto m_0 + \cdots + m_r,
\]
and define
\[
\alpha = \alpha(V_0,\ldots,V_r) := \max \left\{   \dim_k \langle m_0, \ldots, m_r  \rangle_k \, | \, (m_0,\ldots,m_r) \in K \right\} \leq r,
\]
where $\langle m_0, \ldots, m_r  \rangle_k \subset V$ is the subspace spanned by $m_0,\ldots,m_r \in V$.
Since  $\dim \langle m_0, \ldots, m_r  \rangle_k$ is a lower semicontinuous function on $K$ in the Zariski topology,
$\alpha =  \dim_k \langle m_0, \ldots, m_r  \rangle_k $ holds for {\it general}  $(m_0,\ldots,m_r) \in K$.
\end{defn}

\begin{rem}\label{rem_alpha_R&C}
Let $V$ be a $\Q$-vector space of finite dimension.
For subspaces $V_0,\ldots,V_r $ of $ V$,
let $(V_0)_{\C} ,\ldots, (V_r)_{\C} \subset V_{\C}$ be the $\C$-vector spaces obtained by tensoring $\C$ over $\Q$.
Then $\alpha(V_0,\ldots,V_r) = \alpha ((V_0)_{\C} ,\ldots, (V_r)_{\C} ) $ holds.
In other words,
$\alpha$ does not change by tensoring $\C$.
This is because the kernel of 
\[
(V_0 \oplus \dots \oplus V_r)_{\C}=(V_0)_{\C} \oplus \cdots \oplus (V_r)_{\C}  \arw V_{\C} \ : \ (m_0,\ldots,m_r) \mapsto m_0 + \dots + m_r
\]
coincides with $K_{\C}$, and $K $ is dense in $ K_{\C}$ in the Zariski topology.
\end{rem}

Let us return to the original setting.
We set
\[
X_{A, (1_{n-r},1_{r})}^* =\left\{ H \in \Pv \, | \, H \text{ is tangent to } X_A \text{ at } (1_{n-r},1_{r}) \right\}  \subset \Pv,
\]
which is a linear subvariety of codimension $n+1$ defined by \ref{eq:cond-tan-1-1}.
\begin{prop}\label{lem_tangent_dim_alpha}
  Set $V_i = \langle A_i-A_i \rangle_{\Q}$ in $V:=\Q^{n-r}$,
  and let $K \subset V_0 \oplus \cdots \oplus V_r$ be the subspace as in \autoref{def_alpha2}.
  Let $H \subset \P^N$ be the hypersurface defined by $\sum_{i,j} a_{ij} X_{ij} =0$, and assume that $H \in X_{A, (1_{n-r},1_{r})}^*$.
  Set $m_i = \sum_{0 \leq j \leq N_i} a_{ij}u_{ij} \in \CC^{n-r}$.
  Then the following holds.
  \begin{itemize}
  \item[(i)] $m_i \in (V_i)_{\C}$ and $(m_0,\ldots,m_r) $ is contained in $ K_{\C}$.
  \item[(ii)] $(m_0,\ldots,m_r)$ is general in $K_{\C}$ if $H$ is general  in $X_{A, (1_{n-r},1_{r})}^*$.
  \item[(iii)] $\dim Z_{A,H}^{\circ} \cap (\*c)^r = r -  \alpha (V_0\ldots,V_r)$ 
    for general $H \in X_{A, (1_{n-r},1_{r})}^*$. 
    In particular,
    we have $\delta_{X_A} \geq r -  \alpha (V_0\ldots,V_r)$.
    \item[(iv)] If there exists a surjective group homomorphism $ p : \Z^{n-r} \arw \Z^{n-r-c}$ such that $p(A_0) * \cdots * p(A_r) \subset \Z^r \times \Z^{n-r-c}$ is of join type,
      then we have $\alpha (V_0\ldots,V_r) \leq c$,
    and hence $\delta_{X_A} \geq r-c$.
  \end{itemize}
\end{prop}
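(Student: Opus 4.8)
The plan is to deduce the four statements in sequence, each following in a few lines from the machinery already in place: \autoref{thm:tan-1-1}, which describes $X_{A,(1_{n-r},1_{r})}^*$ by the linear equations \ref{eq:cond-tan-1-1}, and \autoref{lem_tangent_dim}(2), which computes $\dim Z_{A,H}^{\circ}\cap(\*c)^r$ in terms of $\langle m_0,\dots,m_r\rangle_{\C}$. For (i): since $H\in X_{A,(1_{n-r},1_{r})}^*$, the first half of \ref{eq:cond-tan-1-1} gives $\sum_j a_{ij}=0$ for each $i$, so $m_i=\sum_j a_{ij}u_{ij}=\sum_j a_{ij}(u_{ij}-u_{i0})$ is a $\C$-combination of elements of $A_i-A_i$, i.e. $m_i\in(V_i)_{\C}$; the second half gives $\sum_i m_i=0$, so $(m_0,\dots,m_r)$ lies in the kernel of $\bigoplus_i(V_i)_{\C}\to V_{\C}$, which is $K_{\C}$ by \autoref{rem_alpha_R&C}.

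For (ii) I would look at the linear map $\Phi:X_{A,(1_{n-r},1_{r})}^*\to K_{\C}$, $(a_{ij})\mapsto(m_0,\dots,m_r)$, which is well defined by (i), and prove it is \emph{surjective}. Given $(m_0,\dots,m_r)\in K_{\C}$, write each $m_i=\sum_{j\geq 1}c_{ij}(u_{ij}-u_{i0})$ (possible because $\{u_{ij}-u_{i0}\}_{j\geq 1}$ spans $(V_i)_{\C}$), and set $a_{ij}=c_{ij}$ for $j\geq 1$ and $a_{i0}=-\sum_{j\geq 1}c_{ij}$; then $\sum_j a_{ij}=0$ and $\sum_j a_{ij}u_{ij}=m_i$, and summing over $i$ gives $\sum_{i,j}a_{ij}u_{ij}=\sum_i m_i=0$, so $(a_{ij})$ satisfies \ref{eq:cond-tan-1-1} and maps to $(m_0,\dots,m_r)$. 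A surjective linear map of vector spaces pulls back proper closed subsets to proper closed subsets, so when $H$ is general in $X_{A,(1_{n-r},1_{r})}^*$ the point $(m_0,\dots,m_r)$ is general in $K_{\C}$.

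Parts (iii) and (iv) then come out quickly. Combining \autoref{lem_tangent_dim}(2) with (ii), for general $H\in X_{A,(1_{n-r},1_{r})}^*$ we get $\dim Z_{A,H}^{\circ}\cap(\*c)^r=r-\dim_{\C}\langle m_0,\dots,m_r\rangle_{\C}=r-\alpha(V_0,\dots,V_r)$ by \autoref{def_alpha2} and \autoref{rem_alpha_R&C}. For the inequality $\delta_{X_A}\geq r-\alpha(V_0,\dots,V_r)$, recall that $Z_{A,H}$ is a $\delta_{X_A}$-plane for general $H$ tangent to $X_A$ at $1_n$ (as noted after \autoref{def_Z_H}); imposing this together with the genericity from (ii), and using $1_r\in Z_{A,H}^{\circ}\cap(\*c)^r\subset Z_{A,H}$, gives $\delta_{X_A}=\dim Z_{A,H}\geq r-\alpha(V_0,\dots,V_r)$. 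For (iv), let $p_{\Q}:\Q^{n-r}\to\Q^{n-r-c}$ be the $\Q$-linear extension of $p$; being of join type is equivalent to injectivity of the summation map $\bigoplus_i p_{\Q}(V_i)\to\Q^{n-r-c}$. Hence for any $(m_0,\dots,m_r)\in K$, the relation $\sum_i p_{\Q}(m_i)=p_{\Q}(\sum_i m_i)=0$ forces $p_{\Q}(m_i)=0$ for all $i$, so $\langle m_0,\dots,m_r\rangle_{\Q}\subseteq\ker p_{\Q}$, a space of dimension $c$; taking the maximum over $K$ yields $\alpha(V_0,\dots,V_r)\leq c$, and then $\delta_{X_A}\geq r-c$ follows from (iii).

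All of the steps are short. The one place I expect to need genuine care is (ii): I must check that $\Phi$ surjects onto \emph{all} of $K_{\C}$, not merely onto a proper subspace — this is exactly what the shape of the equations \ref{eq:cond-tan-1-1} permits, since they impose no condition on $m_i$ beyond $m_i\in(V_i)_{\C}$ and $\sum_i m_i=0$. The remaining subtlety is the minor bookkeeping in (iii) that the genericity condition coming from (ii) and the one making $Z_{A,H}$ a $\delta_{X_A}$-plane can be demanded simultaneously, which is clear since each excludes only a proper closed subset of $X_{A,(1_{n-r},1_{r})}^*$.
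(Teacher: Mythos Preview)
Your proposal is correct and follows essentially the same route as the paper's proof: parts (i) and (iv) are identical to the paper's arguments, and for (ii) both you and the paper pass to the affine cone $L$ of $X_{A,(1_{n-r},1_r)}^*$, define the linear map $L\to K_{\C}$ sending $(a_{ij})$ to $(m_0,\dots,m_r)$, and verify surjectivity from the fact that every element of $(V_i)_{\C}$ can be written as $\sum_j a_{ij}u_{ij}$ with $\sum_j a_{ij}=0$; part (iii) is then the immediate combination of (ii) with \autoref{lem_tangent_dim}. The only cosmetic point is that your map $\Phi$ should strictly have the affine cone $L$ as its domain rather than the projective space $X_{A,(1_{n-r},1_r)}^*$, which is exactly how the paper phrases it.
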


\begin{proof}
  (i) Since $(a_{ij})_{i,j}$ satisfies the condition~\ref{eq:cond-tan-1-1},
  the statement follows.
\vspace{2mm}

\noindent
  (ii) Set
  $L = \set*{(a_{ij})_{i,j} \in \CC^{N+1}}{\text{$(a_{ij})_{i,j}$ satisfies \ref{eq:cond-tan-1-1}}}$, which is a linear subspace of $\CC^{N+1}$.
  Then $H$ is general in $X_{A,(1_{n-r},1_{r})}^*$
  if and only if $(a_{ij})_{i,j}$ is general in $L$.
  Consider the linear map
  \begin{align}\label{eq_K_to_K}
    L \arw K_{\C} : (a_{ij})_{i,j} \mapsto ( m_0,\ldots,m_r)
  \end{align}
  defined by $m_i = \sum_{0 \leq j \leq N_i} a_{ij}    u_{ij} $.

  Since any element of $(V_i)_{\C} =  \langle A_i-A_i \rangle_{\C}$ is written as 
  $ \sum_{0 \leq j \leq N_i} a_{ij}    u_{ij} $ with $ \sum_{0 \leq j \leq N_i} a_{ij}    =0 $, 
  \ref{eq_K_to_K} is surjective.
  Hence $(m_0,\ldots,m_r) \in K_{\C}$ is general for general $(a_{ij})_{i,j} \in L$.
\vspace{2mm}

\noindent
(iii)
  By (ii),
  it holds that $\dim \langle m_0, \ldots, m_r  \rangle_{\C} = \alpha ((V_0)_{\C},\ldots,(V_r)_{\C}) = \alpha (V_0\ldots,V_r)$.
  Hence $\dim Z_{A,H}^{\circ} \cap (\*c)^r  = r -  \alpha (V_0\ldots,V_r)$ holds
  by \autoref{lem_tangent_dim}. 
  The last statement follows from $\delta_{X_A} = \dim Z_{A,H}^{\circ} $ for general $H$.
  
\vspace{2mm}

\noindent
  (iv) 
  Let $ p_{\Q} : \Q^{n-r} \arw \Q^{n-r-c}$ be the surjective linear map induced by $p$.
  Since $p(A_0) * \cdots * p(A_r) \subset \Z^r \times \Z^{n-r-c}$ is of join type,
\begin{align}\label{eq_inner_direct_sum}
\langle p(A_0) -p(A_0)  \rangle_{\Q} + \cdots + \langle p(A_r) -p(A_r)  \rangle_{\Q} \subset \Q^{n-r-c}
\end{align}
is the inner direct sum of $\langle p(A_0) -p(A_0)  \rangle_{\Q} ,\ldots,  \langle p(A_r) -p(A_r)  \rangle_{\Q}$.

Take $(v_0,\ldots,v_r) \in K$.
Then $p_{\Q} (v_i )$ is contained in $ p_{\Q} (V_i) = \langle p(A_i) -p(A_i)  \rangle_{\Q}$.
Furthermore,
it follows from $\sum_{i=0}^r v_i  =0$ that $\sum_{i=0}^r  p_{\Q} (v_i ) = 0 \in \Q^{n-r-c}$.
Since \ref{eq_inner_direct_sum} is the inner direct sum,
we have $ p_{\Q} (v_i ) =0$ for any $0 \leq i \leq r$.
Hence $v_i \in \ker p_{\Q}$ and 
\[
\dim \langle v_0,\ldots,v_r \rangle_{\Q} \subset \ker p_{\Q} \simeq \Q^c,
\]
which implies $\alpha(V_0,\ldots,V_r) \leq c$.
\end{proof}

\begin{ex}
Let $A = A_0*\cdots* A_r \subset \Z^{n-r} \times \Z^r$ be a Cayley sum as above.
It is known that $X_A $ is dual defective if $2r-n >0$ (see \cite[Section 7]{Pi} for example).
This can be checked by \autoref{lem_tangent_dim_alpha} (iv) for $c=n-r$ and the zero map $p : \Z^{n-r} \arw \Z^0= \{0\}$
since $r-c =2r -n$ and 
\[
p(A_0) * \cdots * p(A_r)=\{0\} \times \{0,e_1,\dots,e_r\} \subset \{0\} \times \Z^{r}
\]
is of join type.
\end{ex}

We will use the following lemma about the invariant $\alpha$ in the next section.
In the lemma, we consider the following condition on $K$ in \autoref{def_alpha2}:
\begin{itemize}
\item[$(*)$ ] For general $(m_0,\ldots,m_r)  \in K$ and
for any two integers $i,j$ with $0 \leq i, j \leq r$, it holds that
\[
 \langle m_l \, | \, 0 \leq  l \leq r, l \neq i, j \rangle_k = \langle m_0,\ldots,m_r\rangle_k .
\]
\end{itemize}
In other words, for general $(m_0,\ldots,m_r)  \in K$,
even if any two of $m_0, \dots, m_r$ are removed,
the rest $m_l$'s still span the original space $\lin{m_0, \dots, m_r}_k$.

\begin{lem}\label{key_lem_linear_alg}
Let $V_0,\ldots,V_r $ and $K$ be as in \autoref{def_alpha2} and assume that the condition $(*)$ is satisfied.
Then there exists an $ \alpha(V_0,\ldots,V_r )$-dimensional subspace $V' \subset V$ such that
\begin{itemize}
\item[(1)] for the canonical quotient map $q : V \arw V/ V'$,
the subspace $q(V_0) + \dots + q(V_r) \subset V/V'$ is the inner direct sum of $q(V_0) , \dots , q(V_r) $,
\item[(2)] if $\tilde{q} : V \arw \tilde{V}$ is a linear map and $\tilde{q} (V_0) + \dots + \tilde{q} (V_r) \subset \tilde{V}$ is the inner direct sum of $\tilde{q} (V_i) $'s,
it holds that $V' \subset \ker \tilde{q}$.
\end{itemize}
\end{lem}

\begin{proof}
Set $\alpha = \alpha(V_0,\ldots,V_r ) $.
Fix a general element $(m_0,\ldots,m_r)  \in K$
and set $V' = \langle m_0,\ldots,m_r\rangle_k \subset V$.
By the definition of $\alpha(V_0,\ldots,V_r ) $,
we have $\dim V' = \alpha$.

\begin{claim}\label{claim_independence_W}
For any $(m'_0,\ldots,m'_r) \in K $,
each $m'_i $ is contained in $ V'$.
\end{claim}

\begin{proof}[Proof of \autoref{claim_independence_W}]
It is enough to show that $m'_0 \in V'$.

For any $1 \leq i \leq r$, taking $j=0$ in $(*)$, we have
\[
\lin{m_l \mid 1 \leq l \leq r, l \neq i}_k = \lin{m_0, \dots, m_r}_k.
\]
In particular,
$m_i  \in \lin{m_l \mid 1 \leq l \leq r, l \neq i}_k$,
i.e.,
$m_i$ is written as a linear combination of $m_l$'s
with $l \neq 0, i$.
Hence we can find $b_{i1}, \dots, b_{ir} \in k$ 
with $b_{ii} = 1$ such that
$\sum_{1 \leq l \leq r} b_{il} m_l =0$.

Fix general $(t_1,\ldots,t_r) \in k^r$ and set $\tilde{m}_l = (\sum_{1 \leq i \leq r} t_i b_{il} ) m_l \in V_l$ for $1 \leq l \leq r$.
Since
\[
\sum_{1 \leq l \leq r} \tilde{m}_l = \sum_{1 \leq l \leq r} \left(\sum_{1 \leq i \leq r} t_i b_{il} \right) m_l = \sum_{1 \leq i \leq r} t_i \sum_{1 \leq l \leq r} b_{il} m_l =0,
\]
we have $(0,\tilde{m}_1,\ldots,\tilde{m}_r) \in K$.
Since $b_{ii}=1 \neq 0$ and $(t_1,\ldots,t_r)$ is general,
$\sum_{1 \leq i \leq r} t_i b_{il} \neq 0$ holds for any $l$.
Hence we have
$ \langle \tilde{m}_1,\ldots,\tilde{m}_r\rangle_k = \langle m_1,\ldots,m_r\rangle_k =V'$,
where the second equality holds
since $m_0 = - \sum_{1 \leq i\leq r} m_i$ by the definition of $K$.

Since $(0,\tilde{m}_1,\ldots,\tilde{m}_r), (m'_0,m_1',\ldots,m'_r) \in K$,
we have
\begin{equation}\label{eq:s-sum-in-K}
  (s m'_0 , \tilde{m}_1 + s m'_1,\ldots,\tilde{m}_r + sm'_r) \in K
\end{equation}
for any $s \in k$.
Let us consider
\[
h(s) := \dim \langle m'_0, \tilde{m}_1 + s m'_1,\ldots,\tilde{m}_r + s m'_r\rangle_k,
\]
which is equal to
the dimension of
$\langle s m'_0, \tilde{m}_1 + s m'_1,\ldots,\tilde{m}_r + s m'_r \rangle_k$
if $s \neq 0$.
Then $h(s) \leq \alpha$ holds for any $s \neq 0$ because of \ref{eq:s-sum-in-K}
and the definition of $\alpha$.
Since $h(s)$ is lower semicontinuous as a function of $s \in k$,
we have $h(0) \leq \alpha$,
that is,
the dimension of 
$\langle m'_0, \tilde{m}_1,\ldots,\tilde{m}_r \rangle_k$ is at most $\alpha$ as well.

On the other hand,
the dimension of $ \langle \tilde{m}_1,\ldots,\tilde{m}_r \rangle_k  =V'$ is $\alpha$.
Hence $m'_0$ must be contained in $ \langle \tilde{m}_1,\ldots,\tilde{m}_r \rangle_k  =V'$.
\end{proof}

We show that this $V'$ satisfies the conditions (1), (2) in this lemma.

To see (1),
take $v_i \in V_i $ and assume that $q(v_0) + \dots + q(v_r) =0 \in V/V'$.
Then $v_0 + \cdots + v_r \in V' = \langle m_0,\ldots,m_r \rangle_k$.
Hence $v_0 + \cdots + v_r = \sum_{0 \leq i \leq r} c_i m_i $ holds for some $c_i \in k$.
This means $(v_0 - c_0 m_0 ,\ldots, v_r - c_r m_r ) \in K$.
By \autoref{claim_independence_W},
$v_i - c_i m_i \in V'$ holds.
Since $m_i \in V'$,
we have $v_i \in V'$ and hence $q(v_i) =0 \in V/V'$.
Thus $q(V_0) + \dots + q(V_r) \subset V/V'$ is the inner direct sum of $q(V_i)$'s.

To check (2),
let $\tilde{q}$ be a linear map as in (2).
Since $\sum_{0 \leq i \leq r} m_i =0 \in V$,
it holds that $\sum_{0 \leq i \leq r} \tilde{q} (m_i) =0 \in \tilde{V}$.
Hence we have $\tilde{q} (m_i) =0$ for any $i$
since $ \tilde{q} (m_i)  \in  \tilde{q} (V_i) $ and
$\tilde{q} (V_0) + \dots + \tilde{q} (V_r) \subset \tilde{V}$ is the inner direct sum.
Thus $m_i \in \ker \tilde{q}$ and $V' = \langle m_0,\ldots,m_r\rangle_k $ is contained in $ \ker \tilde{q}$.
\end{proof}

\section{Proof of \autoref{thm_structure}}\label{sec_proof}

\subsection{Paraphrase of \autoref{thm_structure} by Cayley structures}

To prove the main result \autoref{thm_structure},
we paraphrase 
 \autoref{thm_structure} by Cayley structures as follows so that we can apply results in \autoref{sec_refinement}:

\begin{thm}\label{refined_thm_structure}
 Let $A \subset \Z^n$ be a finite subset with $\langle  A - A  \rangle = \Z^n  $. 
 Then there exist surjective group homomorphisms $\pi_1 : \Z^n \arw \Z^{n-c}$ and $\pi_2 : \Z^{n-c} \arw \Z^r$
 for some non-negative integers $r,c$ such that
  \begin{itemize}
  \item[(1)] $\pi_2 \circ \pi_1  : \Z^n \arw \Z^r$ is a Cayley structure of $A$,
  \item[(2)] $\pi_2$ is a Cayley structure of $\pi_1(A) \subset \Z^{n-c} $ of join type, 
  \item[(3)] $\delta_{X_A} = r-c$,
  \item[(4)] if surjective group homomorphisms $\pi_1' : \Z^n \arw \Z^{n-c'}$ and $\pi_2' : \Z^{n-c'} \arw \Z^{r'}$ for some $r',c' \geq 0$ also satisfy (1), (2), (3),
  it holds that 
  \[
  \ker \pi_1 \subset \ker \pi_1' \subset \ker (\pi_2' \circ \pi_1' ) \subset \ker (\pi_2 \circ \pi_1 ) \subset \Z^n.
  \]
  In other words, it holds that
  $X_{\pi_2 \circ \pi_1 (A)} \subset X_{\pi_2' \circ \pi_1' (A)} \subset X_{\pi_1'(A)} \subset X_{\pi_1(A)} \subset X_A$
  as subvarieties of $X_A$.
  \end{itemize}
\end{thm}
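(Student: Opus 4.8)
The plan is to produce the two homomorphisms $\pi_1,\pi_2$ by a two-stage construction, exploiting the geometry of the contact locus together with the refinement of \cite{It} established in \autoref{thm_plane_embedding}. Fix a general hyperplane $H$ tangent to $X_A$ at $1_n$, so that $Z := Z_{A,H} \subset X_A$ is a $\delta_{X_A}$-plane containing $1_n$ by the discussion after \autoref{def_Z_H}. Apply \autoref{thm_plane_embedding} to $Z$: this yields a surjective $\pi : \Z^n \arw \Z^r$ with $\pi(A)$ $\Z$-affinely equivalent to $\{0,e_1,\ldots,e_r\}$ and with $Z \subset X_{\pi(A)}$ minimal among such $r$-planes. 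Via \autoref{lem_projection_cayley} we identify $A$ with a Cayley sum $A_0 * \cdots * A_r \subset \Z^{n-r} \times \Z^r$ so that $X_{\pi(A)}$ becomes the distinguished $r$-plane $\overline{\{1_{n-r}\} \times (\*c)^r}$; this is stage one and it defines $\pi_2$ after we further compose with a projection described below, but first we must set $c$ and $\pi_1$.

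\textbf{Constructing $\pi_1$ via the linear algebra of \autoref{key_lem_linear_alg}.}
Since $Z \subset X_{\pi(A)} = \P^r$, the open contact locus $Z^\circ_{A,H}$ lies entirely in $\{1_{n-r}\} \times (\*c)^r$, so \autoref{lem_tangent_dim} and \autoref{lem_tangent_dim_alpha} apply: with $V_i = \langle A_i - A_i \rangle_\Q \subset V := \Q^{n-r}$ we get $\delta_{X_A} = \dim Z^\circ_{A,H} = r - \alpha(V_0,\ldots,V_r)$. The minimality of $X_{\pi(A)}$ from \autoref{thm_plane_embedding}(iii), translated through \autoref{rem_minimality} (the restrictions $\overline H_i|_Z$ are pairwise distinct hyperplanes on $Z \simeq \P^{\delta_{X_A}}$), is exactly what forces the hypothesis $(*)$ of \autoref{key_lem_linear_alg}: if some $\overline H_i|_Z, \overline H_j|_Z$ could be removed without shrinking $\lin{m_0,\ldots,m_r}_k$, one could contract two indices and find a strictly larger $r'$-plane containing $Z$, contradicting minimality — this is the step I expect to be the main obstacle, since it requires carefully matching the combinatorial minimality statement with the genericity-and-span condition $(*)$ and I would need to argue it via the description of $Z^\circ_{A,H}$ as $\P_*(\ker\xi)$ together with \ref{eq_beta(1,u)}. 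Granting $(*)$, \autoref{key_lem_linear_alg} produces a subspace $V' \subset V$ of dimension $c := \alpha(V_0,\ldots,V_r)$ such that the quotient $q : V \arw V/V'$ makes $q(V_0) + \cdots + q(V_r)$an inner direct sum, and $V'$ lies in the kernel of any linear map with that property. Choosing $p : \Z^{n-r} \arw \Z^{n-r-c}$ so that $p_\Q$ has kernel $V'$ (possible after shrinking $V'$ to a saturated sublattice, which does not change $\alpha$ by a standard argument since $\alpha$ is computed over $\Q$), I set $\pi_1 := (p \times \id_{\Z^r}) \circ f$ where $f$ is the $\Z$-affine identification of stage one, and $\pi_2 : \Z^{n-r-c} \times \Z^r \arw \Z^r$ the second projection.

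\textbf{Verifying (1)--(4).}
Item (1) is immediate since $\pi_2 \circ \pi_1$ is conjugate to $\pi$ and $\pi(A) \sim \{0,e_1,\ldots,e_r\}$. Item (2) says $p(A_0) * \cdots * p(A_r)$ is of join type, which is precisely conclusion (1) of \autoref{key_lem_linear_alg} after translating "inner direct sum of the $q(V_i)$" into the join-type condition of \autoref{def_join_type}; item (3), $\delta_{X_A} = r - c$, follows by combining $\delta_{X_A} = r - \alpha(V_0,\ldots,V_r)$ from \autoref{lem_tangent_dim_alpha}(iii) with $c = \alpha(V_0,\ldots,V_r)$. For the maximality statement (4), suppose $\pi_1', \pi_2'$ also satisfy (1)--(3). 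The inclusion $\ker \pi_1' \subset \ker(\pi_2' \circ \pi_1')$ is trivial, and $\ker(\pi_2' \circ \pi_1') \subset \ker(\pi_2 \circ \pi_1)$ follows from part (iii) of \autoref{thm_plane_embedding} applied to the $r'$-plane $X_{\pi_2' \circ \pi_1'(A)}$ — it contains $Z$ because (3) forces its dimension to be $\geq \delta_{X_A}$ and one checks $Z$ is contained in it using that the contact locus degenerates into any Cayley $r'$-plane of the right dimension (this is the "$\delta_{X_A} \geq r'-c'$ holds in general" remark after \autoref{thm_structure}, whose hypotheses are met here) — so minimality of $X_{\pi_2 \circ \pi_1(A)}$ gives the containment. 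Finally $\ker \pi_1 \subset \ker \pi_1'$ follows from conclusion (2) of \autoref{key_lem_linear_alg}: the map $\pi_1'$ restricted to the $\Z^{n-r}$-factor (after identifying $\ker(\pi_2'\circ\pi_1')=\ker(\pi_2\circ\pi_1)$) induces a linear map $\tilde q$ on $V$ whose image decomposes the $\tilde q(V_i)$ as an inner direct sum, whence $V' \subset \ker \tilde q$, i.e. $\ker \pi_1 \subset \ker \pi_1'$. The geometric reformulation in the last display of (4) is then just \autoref{lem_projection}.
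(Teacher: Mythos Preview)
Your overall architecture matches the paper's proof: obtain $\pi$ from \autoref{thm_plane_embedding} applied to the contact locus, then split it as $\pi_2 \circ \pi_1$ using \autoref{key_lem_linear_alg} on $V_i = \langle A_i - A_i\rangle_\Q$. Your description of how $(*)$ is verified is slightly garbled, though you name the right ingredients; the paper's argument is: by \autoref{rem_minimality} the restrictions $\overline H_i|_{Z_H}$ are pairwise distinct hyperplanes on $Z_H \simeq \P^{\delta_{X_A}}$, hence $\dim(\overline H_i|_{Z_H} \cap \overline H_j|_{Z_H}) = \dim Z_H - 2$ for $i \neq j$; computing the same intersection inside $\P_*(\ker\xi)$ gives $r-2 - \dim\langle m_l : l \neq i,j\rangle_\C$, and equating yields $(*)$.

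The genuine gap is in your argument for the inclusion $\ker\pi_1 \subset \ker\pi_1'$ in (4). You write ``after identifying $\ker(\pi_2'\circ\pi_1') = \ker(\pi_2\circ\pi_1)$'', but from the minimality step you only obtain the \emph{inclusion} $\ker(\pi_2'\circ\pi_1') \subset \ker(\pi_2\circ\pi_1)$ (equivalently $r' \leq r$), and this inclusion is typically strict. The difficulty is that hypothesis~(2) for $\pi_1',\pi_2'$ asserts the inner-direct-sum property for the subspaces $V'_j$ attached to the \emph{finer} Cayley decomposition coming from $\pi' = \pi_2'\circ\pi_1'$, whereas to invoke conclusion~(2) of \autoref{key_lem_linear_alg} you need it for the $V_i$'s attached to the \emph{coarser} decomposition coming from $\pi$. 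Each fiber of $\pi|_A$ is a union of fibers of $\pi'|_A$, so each $M_i$ is generated by several $M'_j$'s together with differences between elements of distinct $\pi'$-fibers, and it is not automatic that $\pi_1'(M_i)$'s still form an inner direct sum. The paper handles this with \autoref{lem_pi_pi'}: writing $\pi = \varpi \circ \pi'$, one shows that if $\pi_1'(A)$ is of join type with respect to $\pi_2'$ then it is also of join type with respect to $\varpi\circ\pi_2'$; only then can one take $\tilde q = (\pi_1')_\Q|_V$ in \autoref{key_lem_linear_alg}\,(2) and conclude $V' \subset \ker\tilde q$. Without this coarsening step your final sentence does not go through.
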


\begin{rem}\label{rem_uniqueness}
By condition (4), $  \ker \pi_1, \ker (\pi_2 \circ \pi_1 )$ and subvarieties $X_{\pi_2 \circ \pi_1 (A)} \subset  X_{\pi_1(A)} \subset X_A$
 in \autoref{refined_thm_structure} are uniquely determined by $A$.
In other words, $r $ and $c$ are uniquely determined by $A$, and
$\pi_1$ and $\pi_2$ are uniquely determined by $A$ up to $\GL(\Z^{n-c})$ and $\GL(\Z^r)$.

We note that the condition
$\ker (\pi_2' \circ \pi_1' ) \subset \ker (\pi_2 \circ \pi_1 )$ is equivalent to $\pi_2 \circ \pi_1 \preceq \pi'_2 \circ \pi'_1$.
In particular, the Cayley structure $\pi_2 \circ \pi_1 $ is uniquely determined by $A$ up to equivalence.
\end{rem}

We prove \autoref{refined_thm_structure} in the rest of this subsection,
and 
prove \autoref{thm_structure} and \autoref{thm_structure_max} in the next subsection.
In the rest of this subsection,
we fix a finite set $A \subset \Z^n$ with $\langle A-A \rangle =\Z^n$.

If $\delta_{X_A} =0$,
$\pi_1= \id_{\Z^n}$ and the zero homomorphism $\pi_2 : \Z^n \arw \Z^0$ for $r=c=0$ satisfy (1)-(4) in \autoref{refined_thm_structure}
since $\ker \pi_1  = \{0\}$ and $\ker (\pi_2 \circ \pi_1 ) =\Z^n$.
Hence we assume $\delta_{X_A} >0$ in the rest of this subsection.

First,
we construct a Cayley structure $\pi : \Z^n \arw \Z^r$ of $A$ using results in \autoref{sec_refinement}.
Next, we decompose $\pi$ into $\pi_1 : \Z^n \arw \Z^{n-c} $ and ${\pi_2} : \Z^{n-c} \arw \Z^r$ as in the statement of \autoref{refined_thm_structure}
using results in \autoref{sec_contact_locus}.

\begin{lem}\label{lem_find_A_i}
There exists a Cayley structure $\pi : \Z^n \arw \Z^r$ of $A$ for some $r$ 
such that
for a general hyperplane $H \subset \P^N$ which is tangent to $X_A$ at $1_n$,
\begin{itemize}
\item[(i)] the contact locus $Z_H=\overline{\{ x \in (\*c)^n \subset X_A \, | \, H \text{ is tangent to } X_A \text{ at } x \}}$ is contained in the $r$-plane $X_{\pi(A)} \subset X_A$,
\item[(ii)] $\overline{H}_i |_{Z_H} \neq \overline{H}_j |_{Z_H}$ for $i \neq j$, where $\overline{H}_0, \ldots, \overline{H}_r $ are the torus invariant hyperplanes of $X_{\pi(A)} \simeq \P^r$,
\item[(iii)]
 if a Cayley structure $\pi' : \Z^n \arw \Z^{r'}$ of $A$ satisfies (i),  $X_{\pi(A)} \subset X_{\pi'(A)}$ holds.
\end{itemize}
In particular,
such Cayley structure $\pi$ is unique up to equivalence.
\end{lem}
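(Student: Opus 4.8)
The plan is to extract $\pi$ from the contact locus of a sufficiently general tangent hyperplane using \autoref{thm_plane_embedding}, and then to promote the resulting statement from ``one fixed $H$'' to ``general $H$'' by a finiteness argument.

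Concretely, since $\delta := \delta_{X_A} > 0$, for a general hyperplane $H$ tangent to $X_A$ at $1_n$ the contact locus $Z_{A,H} \subset X_A$ is a $\delta$-plane, and $1_n \in Z_{A,H}$ by definition. I would apply \autoref{thm_plane_embedding} to $Z := Z_{A,H}$; this yields a surjective group homomorphism $\pi_H : \Z^n \arw \Z^{r_H}$ with $\pi_H(A)$ $\Z$-affinely equivalent to $\{0,e_1,\dots,e_{r_H}\}$, with $Z_{A,H} \subset X_{\pi_H(A)}$, minimal with these two properties; and \autoref{rem_minimality} gives $\overline{H}_i|_{Z_{A,H}} \neq \overline{H}_j|_{Z_{A,H}}$ for $i \neq j$. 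Thus, once the dependence on $H$ is dealt with, $\pi := \pi_H$ will satisfy (i), (ii-1), (ii-2), with (ii-3) being exactly \autoref{thm_plane_embedding}(iii).

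The point to settle is that one fixed $\pi$ can serve \emph{all} general $H$. For this I would observe that, since $A$ is finite, there are only finitely many subvarieties of $X_A$ of the form $X_{\pi(A)}$ with $\pi$ surjective and $\pi(A)$ $\Z$-affinely equivalent to a standard simplex $\{0,e_1,\dots,e_r\}$: up to $\GL(\Z^r)$ such a $\pi$ is determined by the partition of $A$ into the fibres of $\pi$, and a finite set has finitely many partitions. Write $\mathcal{P}$ for this finite set of planes. Each $X_{\pi_H(A)}$ lies in $\mathcal{P}$; for fixed $P \in \mathcal{P}$ the condition ``$Z_{A,H} \subset P$'' is closed on the dense locus of tangent hyperplanes $H$ for which $Z_{A,H}$ is a $\delta$-plane, so, since \autoref{thm_plane_embedding}(iii) makes $X_{\pi_H(A)}$ the minimum (for inclusion) of $\{P \in \mathcal{P} : Z_{A,H} \subset P\}$, the set $\{H : X_{\pi_H(A)} = P\}$ is constructible. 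Since the hyperplanes tangent at $1_n$ form a linear subvariety of $(\P^N)^{\vee}$, hence an irreducible space, exactly one $P_0 \in \mathcal{P}$ has the property that $\{H : X_{\pi_H(A)} = P_0\}$ contains a dense open subset. I would fix a $\pi$ with $X_{\pi(A)} = P_0$ and set $r := \dim P_0$. Then (i) is part of \autoref{thm_plane_embedding}; (ii-1) and (ii-2) hold for general $H$ because there $X_{\pi_H(A)} = X_{\pi(A)}$, so $Z_{A,H} \subset X_{\pi(A)}$ and the torus-invariant hyperplanes of $X_{\pi(A)}$ restrict to distinct hyperplanes on $Z_{A,H}$; and for (ii-3), given a competitor $\pi'$ satisfying (i) and (ii-1), I would choose $H$ general enough to lie in both the dense locus for $\pi$ and the one witnessing (ii-1) for $\pi'$, and apply \autoref{thm_plane_embedding}(iii) to $Z = Z_{A,H}$ with competitor $\pi'$ to get $X_{\pi(A)} = X_{\pi_H(A)} \subset X_{\pi'(A)}$.

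The main obstacle is precisely this passage from the $H$-dependent homomorphism handed back by \autoref{thm_plane_embedding} to a single $\pi$ good for general $H$; after the finiteness of $\mathcal{P}$ and the irreducibility of the space of tangent hyperplanes are in place, the verification of (i)--(ii-3) is routine bookkeeping with \autoref{thm_plane_embedding} and \autoref{rem_minimality}.
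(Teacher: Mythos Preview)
Your proposal is correct and follows essentially the same approach as the paper: apply \autoref{thm_plane_embedding} to the contact locus $Z_{A,H}$ of a general tangent hyperplane, use the finiteness of the set $\mathcal{P}$ of planes $X_{\pi(A)}$ (argued via partitions of the finite set $A$) together with the irreducibility of $X_{A,1_n}^*$ to pass from an $H$-dependent $\pi_H$ to a single $\pi$, and read off (ii-2) from \autoref{rem_minimality} and (ii-3) from \autoref{thm_plane_embedding}(iii). Your constructibility argument and your explicit verification of (ii-3) spell out steps that the paper leaves implicit, but the logic is the same.
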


\begin{proof}
Let $\pi : \Z^n \arw \Z^r$ be a Cayley structure of $A$.
Then $\ker \pi $ is generated by $\{ u - u' \, | \, \pi(u) = \pi(u') \}$.
Hence $\ker \pi$ is uniquely determined by the decomposition
\begin{align}\label{eq_decomp}
A = (\pi^{-1}(u'_0) \cap A ) \sqcup (\pi^{-1}(u'_1) \cap A ) \sqcup \cdots \sqcup (\pi^{-1}(u'_r) \cap A ),
\end{align}
where $\pi(A)=\{u'_0,\ldots,u'_r\}$.
Thus as a subvariety,
the $r$-plane $X_{\pi(A)} \subset X_A$ is uniquely determined by the decomposition \ref{eq_decomp}.

Since there are at most finitely many decompositions of $A$ into $r+1$ subsets for $0 \leq r \leq n$,
the following set of subvarieties of $X_A$ is a finite set:
\begin{align}\label{eq_set_of_subvar}
\{ \Lambda \subset X_A \, | \, \Lambda =X_{\pi (A)}  \text{ for a Cayley structure } \pi : \Z^n \arw \Z^r \text{ with } 0 \leq r \leq n  \} .
\end{align}

Let $X_{A, 1_n}^* \subset \Pv$ be the set of hyperplanes which are tangent to $X_A$ at $1_n$.
For general $H \in X_{A, 1_n}^*$,
$Z_H$ is a $\delta_{X_A}$-plane which contains $1_n$.
Applying \autoref{thm_plane_embedding} to $Z_H$,
we have a Cayley structure $\pi_H : \Z^n \arw \Z^{r_H}$ of $A$
as in the statement of \autoref{thm_plane_embedding}.


By \autoref{thm_plane_embedding},
$X_{\pi_H(A)}$ is the minimum subvariety containing $Z_H$ in \ref{eq_set_of_subvar}.
Since \ref{eq_set_of_subvar} is a finite set,
$X_{\pi_H(A)} $ does not depend on the choice of general $H \in X_{A,1_n}^*$,
i.e.,
the equivalence class of the Cayley structure $\pi_H$ does not depend on general $H$.
Let $\pi : \Z^n \arw \Z^r$ be a representative of the equivalence class.
Then (i), (iii) are satisfied since
$X_{\pi (A)}=X_{\pi_H (A)}$ holds for
general $H \in X_{A,1_n}^*$.
By \autoref{rem_minimality}, (ii) follows.
The uniqueness follows from (iii).
\end{proof}


Let $\pi : \Z^n \arw \Z^r$ be the Cayley structure of $A$ obtained by \autoref{lem_find_A_i}.
Set $V:=(\ker \pi)_{\Q} \simeq \Q^{n-r}$.
For $\pi(A)=\{u'_0,\ldots,u'_r\}$, set
\begin{equation}\label{eq_V_i}
  \begin{aligned}
    M_i &:=  \langle \pi^{-1}(u'_i) \cap A  - \pi^{-1}(u'_i) \cap A  \rangle \subset \ker \pi,
    \\
    V_i &:=(M_i)_{\Q} \subset V.
  \end{aligned}
\end{equation}

Assume that $\pi$ is the composite of two surjective group homomorphisms
$\pi_1 : \Z^n \arw \Z^{n-c} $ and ${\pi_2} : \Z^{n-c} \arw \Z^r $ for an integer $c \geq 0$.
Since $\pi_2( \pi_1(A)) =\pi(A) $ is $\Z$-affinely equivalent to $\{0,e_1,\ldots,e_r\}$,
$\pi_2$ is a Cayley structure of $\pi_1(A) \subset \Z^{n-c}$.
As in \autoref{def_join_type_wrt},
the Cayley structure $\pi_2$ is of join type
 if and only if
the subspace $\sum_{i=0}^r M_i^{{\pi_2}} \subset \ker {\pi_2}$
is the inner direct sum of $M_i^{{\pi_2}}$ 's,
where
\[
M_i^{{\pi_2}}:= \langle {\pi_2}^{-1}(u'_i) \cap \pi_1(A)  - {\pi_2}^{-1}(u'_i) \cap \pi_1(A)  \rangle \subset \ker {\pi_2}.
\]
Since $\pi_1 (  \pi^{-1}(u'_i) \cap A  ) = {\pi_2}^{-1}(u'_i) \cap \pi_1(A)$, 
it holds that
\begin{align*}
\pi_1(M_i) &= \langle  \pi_1(\pi^{-1}(u'_i) \cap A ) -  \pi_1(\pi^{-1}(u'_i) \cap A ) \rangle\\
&=\langle {\pi_2}^{-1}(u'_i) \cap \pi_1(A)  - {\pi_2}^{-1}(u'_i) \cap \pi_1(A)  \rangle = M_i^{{\pi_2}}.
\end{align*}
Hence ${\pi_2}$ is of join type if and only if
$\pi_1(M_0) + \dots + \pi_1(M_r) \subset \ker {\pi_2}$ is the inner direct sum of $\pi_1(M_i)$'s.
This is also equivalent to the condition that
${\pi_1}_{\Q}(V_0) + \dots + {\pi_1}_{\Q}(V_r) \subset (\ker {\pi_2})_{\Q}$ is the inner direct sum of ${\pi_1}_{\Q}(V_i)  $'s
for the $\Q$-linear map ${\pi_1}_{\Q} : \Q^n \arw \Q^{n-c}$ obtained from $\pi_1$
since ${\pi_1}_{\Q}(V_i)   = \pi_1(M_i)_{\Q}$.

\vspace{2mm}
Under the above observation,
we construct the homomorphisms $\pi_1$ and ${\pi_2}$ in \autoref{refined_thm_structure} as follows.

\begin{lem}\label{lem_i,j}
Let $\pi : \Z^n \arw \Z^r$ be the Cayley structure obtained by \autoref{lem_find_A_i}.
Let $V_i$ as \ref{eq_V_i} and set $c= \alpha(V_0,\ldots,V_r)$.
Then there exist surjective group homomorphisms $\pi_1 : \Z^n \arw \Z^{n-c}$ and ${\pi_2} : \Z^{n-c} \arw \Z^r$
such that
\begin{itemize}
\item[(i)] $\pi = {\pi_2} \circ \pi_1$,
\item[(ii)] $\pi_2$ is a Cayley structure of $\pi_1(A) \subset \Z^{n-c} $ of join type,
\item[(iii)] if (i) and (ii) are satisfied for $\tilde{\pi}_1 : \Z^n \arw \Z^{n-\tilde{c}}$ and $\tilde{\pi}_2 : \Z^{n-\tilde{c}} \arw \Z^r$,
it holds that $\ker \pi_1 \subset \ker \tilde{\pi}_1$.
\end{itemize}
In addition, we have $\delta_{X_A} = r-c$.
\end{lem}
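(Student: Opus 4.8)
The plan is to apply the linear-algebra \autoref{key_lem_linear_alg} to the subspaces $V_0,\dots,V_r\subseteq V=(\ker\pi)_{\Q}$, after checking that its hypothesis $(*)$ holds; the subspace $V'$ it produces will cut out $\ker\pi_1$, and its two minimality clauses will yield (ii) and (iii). First I would pass to the Cayley picture: by \autoref{lem_projection_cayley} applied to $\pi$, identify $A$ with a Cayley sum $A_0*\cdots*A_r\subseteq\Z^{n-r}\times\Z^r$ so that $\pi$ becomes the second projection, $1_n$ becomes $(1_{n-r},1_r)$, $M_i=\langle A_i-A_i\rangle$, and $X_{\pi(A)}=\P^r$ carries homogeneous coordinates $[W_0:\cdots:W_r]$ whose torus-invariant hyperplanes are $\overline H_i=\{W_i=0\}$. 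Fix a general hyperplane $H$ tangent to $X_A$ at $1_n$ and put $m_i=\sum_j a_{ij}u_{ij}$; then $\sum_i m_i=0$, the tuple $(m_0,\dots,m_r)$ is general in $K_{\C}$ by \autoref{lem_tangent_dim_alpha}(ii), and by property (ii-1) of \autoref{lem_find_A_i} together with \autoref{lem_tangent_dim}(1) the contact locus is $Z_H=\overline{Z_{A,H}^{\circ}\cap(\*c)^r}=\P_{\! *}(\ker\xi)$, where $\xi\colon\C^{r+1}\to\C^{n-r}$ sends $(W_0,\dots,W_r)$ to $\sum_i W_i m_i$ (here one uses that $Z_{A,H}^{\circ}=Z_H\cap(\*c)^n$ and that $1_n$ lies in the open orbit of $X_{\pi(A)}$, so that $Z_{A,H}^{\circ}\cap(\*c)^r$ is dense in $Z_H$).

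The crucial step will be to extract $(*)$ from property (ii-2). Since $(1,\dots,1)\in\ker\xi$, each functional $W_i|_{\ker\xi}$ is nonzero, so $\overline H_i|_{Z_H}$ is a genuine hyperplane of $Z_H=\P_{\! *}(\ker\xi)$ given by $W_i|_{\ker\xi}$; hence (ii-2) says exactly that $W_i|_{\ker\xi}$ and $W_j|_{\ker\xi}$ are linearly independent for all $i\neq j$. Writing $W=\langle m_0,\dots,m_r\rangle_{\C}$, one has $m_i\in\langle m_l\mid l\neq i,j\rangle_{\C}$ if and only if there is $(W_l)\in\ker\xi$ with $W_i\neq0$ and $W_j=0$; linear independence of $W_i|_{\ker\xi}$ and $W_j|_{\ker\xi}$ makes the map $\ker\xi\to\C^2$, $v\mapsto(W_i(v),W_j(v))$, surjective, so such a tuple exists, and likewise with the roles of $i,j$ exchanged, giving $\langle m_l\mid l\neq i,j\rangle_{\C}=W$ (the case $i=j$ is the same and uses only $W_i|_{\ker\xi}\neq0$). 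Thus $(*)$ holds for general $(m_0,\dots,m_r)\in K_{\C}$, and hence — the invariant $\alpha$ and the relevant spans being unchanged under extension of scalars, and $K$ being Zariski dense in $K_{\C}$ — for general $(m_0,\dots,m_r)\in K$.

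Then I would apply \autoref{key_lem_linear_alg} to obtain $V'\subseteq V$ with $\dim V'=\alpha(V_0,\dots,V_r)=c$ satisfying clauses (1) and (2). Set $N':=V'\cap\Z^n$, a saturated sublattice of $\ker\pi$ of rank $c$; put $\Z^{n-c}:=\Z^n/N'$, let $\pi_1\colon\Z^n\to\Z^{n-c}$ be the quotient map and $\pi_2\colon\Z^{n-c}\to\Z^r$ the homomorphism induced by $\pi$, so that $\pi=\pi_2\circ\pi_1$ and $\ker\pi_2=\ker\pi/N'$ is free of rank $n-c-r$; this gives (i). For (ii): by the reformulation given just before this lemma, ``$\pi_1(A)$ is of join type with respect to $\pi_2$'' means that $(\pi_1)_{\Q}(V_0)+\cdots+(\pi_1)_{\Q}(V_r)$ is the inner direct sum inside $(\ker\pi_2)_{\Q}=V/V'$, and since $(\pi_1)_{\Q}|_V$ is precisely the quotient $q\colon V\to V/V'$ this is clause (1). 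For (iii): if $\tilde\pi_1\colon\Z^n\to\Z^{n-\tilde c}$ and $\tilde\pi_2$ also satisfy (i) and (ii), then $\pi=\tilde\pi_2\circ\tilde\pi_1$ forces $\ker\tilde\pi_1\subseteq\ker\pi$ and $\tilde\pi_1(\ker\pi)=\ker\tilde\pi_2$, so $\tilde q:=(\tilde\pi_1)_{\Q}|_V\colon V\to(\ker\tilde\pi_2)_{\Q}$ has $\ker\tilde q=(\ker\tilde\pi_1)_{\Q}$; since (ii) for $\tilde\pi$ says $\tilde q(V_0)+\cdots+\tilde q(V_r)$ is an inner direct sum, clause (2) gives $V'\subseteq\ker\tilde q$, whence $\ker\pi_1=N'\subseteq\ker\tilde\pi_1$.

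Finally, to see $\delta_{X_A}=r-c$: one inequality is \autoref{lem_tangent_dim_alpha}(iii), as $c=\alpha(V_0,\dots,V_r)$. Conversely, for general $H$ tangent to $X_A$ at $1_n$ the plane $Z_H=Z_{A,H}$ has dimension $\delta_{X_A}$, is irreducible, and lies in $X_{\pi(A)}\simeq\P^r$ with $1_n$ in its open orbit, so $Z_{A,H}^{\circ}\cap(\*c)^r=Z_H\cap(\*c)^r$ is dense in $Z_H$ and therefore $\delta_{X_A}=\dim\bigl(Z_{A,H}^{\circ}\cap(\*c)^r\bigr)=r-\alpha(V_0,\dots,V_r)=r-c$ by \autoref{lem_tangent_dim} and \autoref{lem_tangent_dim_alpha}(iii). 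The main obstacle is the crucial step above — converting the geometric genericity statement (ii-2) into the combinatorial hypothesis $(*)$ — together with the routine but somewhat delicate bookkeeping between lattices, $\Q$-spaces and $\C$-spaces (saturation of $N'$, rationality of $V'$, transfer of genericity between $K$ and $K_{\C}$) needed to make the construction of $\pi_1,\pi_2$ rigorous.
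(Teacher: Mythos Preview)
Your proof is correct and follows essentially the same route as the paper: verify that $(*)$ holds for $V_0,\dots,V_r$ using property (ii-2) of \autoref{lem_find_A_i}, then invoke \autoref{key_lem_linear_alg} to produce $V'$, define $\pi_1$ as the quotient by the saturated sublattice $V'\cap\Z^n$, and read off (i)--(iii) and $\delta_{X_A}=r-c$ from the Cayley picture via \autoref{lem_tangent_dim} and \autoref{lem_tangent_dim_alpha}. The only noticeable variation is in the derivation of $(*)$: the paper computes $\dim(\overline H_i|_{Z_H}\cap\overline H_j|_{Z_H})$ in two ways (once as $\dim Z_H-2$ from the distinct-hyperplanes condition, once as $r-2-\dim\langle m_l\mid l\neq i,j\rangle_{\C}$ from the explicit equations) and equates, whereas you argue directly that (ii-2) is the linear independence of the functionals $W_i|_{\ker\xi},W_j|_{\ker\xi}$ and use surjectivity of $\ker\xi\to\C^2$ to produce a vector exhibiting $m_i\in\langle m_l\mid l\neq i,j\rangle_{\C}$; these are two phrasings of the same linear-algebra fact.
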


\begin{proof}

First we assume that $V_0,\dots,V_r$ satisfy the condition $(*)$ in \autoref{key_lem_linear_alg}
and construct $\pi_1$ and ${\pi_2}$ which satisfy (i), (ii), (iii).

Since we assume the condition $(*)$,
we can apply \autoref{key_lem_linear_alg} to $V_i$'s
and obtain a subspace $V' \subset V$. 
Since $V' \subset V=(\ker \pi)_{\Q} \subset (\Z^{n})_{\Q}$ is a $c$-dimensional subspace of $(\Z^{n})_{\Q}$,
$M' := \Z^{n} / (V' \cap \Z^{n})$ is a free abelian group of rank $n-c$.
We fix a group isomorphism $M' \simeq \Z^{n-c}$
and set $\pi_1 : \Z^{n} \arw M' \simeq \Z^{n-r}$ to be the canonical quotient homomorphism.
By construction,
$\ker \pi_1 = V' \cap \Z^{n} \subset \ker \pi$ holds.
Hence $\pi : \Z^n \arw \Z^r$ factors $\pi_1$ as
\[
\pi : \Z^{n} \stackrel{\pi_1}{\longrightarrow} \Z^{n-c} \stackrel{{\pi_2}}{\longrightarrow} \Z^r
\]
with a surjective group homomorphism ${\pi_2} $.
Then (i) holds for these $\pi_1$ and ${\pi_2}$.

Since $V ' $ is obtained by \autoref{key_lem_linear_alg},
\[
{\pi_1}_{\Q}(V_0) + \dots + {\pi_1}_{\Q}(V_r) \subset (\ker {\pi_2})_{\Q} = \pi_1(\ker \pi)_{\Q} \simeq V/V'
\]
is the inner direct sum of ${\pi_1}_{\Q}(V_0) , \dots ,{\pi_1}_{\Q}(V_r) $.
Hence (ii) follows.

If (i) and (ii) are satisfied for $\tilde{\pi}_1 : \Z^n \arw \Z^{n-\tilde{c}}$ and $\tilde{\pi}_2 : \Z^{n-\tilde{c}} \arw \Z^r$,
${\tilde{\pi}{}_1}_{\Q}(V_0) + \dots + {\tilde{\pi}{}_1}_{\Q}(V_r) \subset {(\ker {\tilde{\pi}_2})}_{\Q} $ 
 is the inner direct sum of ${\tilde{\pi}{}_1}_{\Q}(V_i) $'s.
By (2) in \autoref{key_lem_linear_alg},
we have $(\ker \pi_1)_{\Q}=V' \subset (\ker \tilde{\pi}_1)_{\Q} $.
Hence $\ker \pi_1 \subset \ker \tilde{\pi}_1$ holds and (iii) follows.

\vspace{2mm}
To show that $V_0,\dots,V_r$ satisfy the condition $(*)$ in \autoref{key_lem_linear_alg},
let $A_0, \dots, A_r \subset \Z^{n-r}$ be finite subsets obtained by
applying \autoref{lem_projection_cayley} to $\pi$.
Under a suitable group isomorphism $\ker \pi \simeq \Z^{n-r}$,
the subspace $V_i \subset (\ker \pi)_{\Q}$ coincides with $\langle A_i -A_i\rangle_{\Q} \subset (\Z^{n-r})_{\Q}= \Q^{n-r}$ for any $0 \leq i \leq r$.
Hence $V_0,\dots,V_r$ satisfy the assumption in \autoref{key_lem_linear_alg}
if and only if so do $ \langle A_0 -A_0 \rangle_{\Q} , \dots, \langle A_r -A_r \rangle_{\Q}$.
Thus in the rest of the proof of this lemma,
we may assume that $A=A_0 * \cdots * A_r$ for these $A_i$'s
with $V_i = \langle A_i -A_i\rangle_{\Q}$,
and $\pi : \Z^{n-r} \times \Z^r \arw \Z^r$ is the projection to the second factor.

We use the notation of \autoref{lem_tangent_dim} and \autoref{lem_tangent_dim_alpha}.
Let $H \subset \P^N$ be a general hyperplane which is tangent to $X_A$ at $1_n$.
To simplify notation,
we abbreviate $Z_{A,H}^{\circ},Z_{A,H}$ to $Z_H^{\circ}, Z_H$ respectively.
By \autoref{lem_find_A_i},
$Z_H \subset X_{\pi(A)}$ holds.
Since $X_{\pi(A)} = \overline{(\*c)^r} \subset X_A$ under the identification $(\*c)^r \simeq \{1_{n-r}\} \times (\*c)^r \subset  (\*c)^{n-r} \times (\*c)^r $,
we have
\[
Z_{H}^{\circ} = Z_{H}^{\circ} \cap (\C^{\times})^{r} =\left\{ w \in (\C^{\times})^{r} \, \Big| \,  \sum_{0 \leq i \leq r } w_i m_i = 0 \in \C^{n-r} \right\} 
\]
by \autoref{lem_tangent_dim}.
Since $Z_H$ is the closure of $Z_{H}^{\circ} $, it holds that
\begin{align}\label{eq_Z_H=}
Z_{H} =\left\{ [W_0:\cdots:W_r] \in \P^r=X_{\pi(A)} \, \Big| \,  \sum_{0 \leq i \leq r } W_i m_i = 0 \in \C^{n-r} \right\} .
\end{align}
Then we have
\begin{align}\label{delta=r-alpha}
\delta_{X_A} = \dim Z_H =  r -  \dim \langle m_0, \ldots, m_r  \rangle_{\C} =r-c,
\end{align}
where
the first equality follows from the definition of the dual defect,
the second equality follows from \ref{eq_Z_H=},
and the last equality holds since $(m_0,\ldots,m_r) \in K_{\C}$ is general by \autoref{lem_tangent_dim_alpha} and $c=\alpha(V_0,\dots,V_r)$.

Let $\overline{H}_i   \subset X_{\pi(A)}=\P^r$ be the torus invariant hyperplane defined by $W_i=0$.
For any two distinct integers $i, j$ with $0 \leq i, j \leq r$,
it holds that $\overline{H}_i |_{Z_H} \neq \overline{H}_j |_{Z_H}  $ by \autoref{lem_find_A_i} (ii).
Since $ \overline{H}_i |_{Z_H}  , \overline{H}_j |_{Z_H} $ are hyperplanes of $Z_H \simeq \P^{\delta_{X_A}}$, 
\[
\dim \overline{H}_i |_{Z_H}  \cap \overline{H}_j |_{Z_H} = \dim Z_H -2 =  r -  \dim \langle m_0, \ldots, m_r  \rangle_{\C} -2,
\]
where we set $\dim \emptyset = -1$.
On the other hand,
\[
\dim \overline{H}_i |_{Z_H}  \cap \overline{H}_j |_{Z_H} = r -2 - \dim \langle m_l \, | \, 0 \leq l \leq r, l \neq i, j \rangle_{\C}
\] holds
since $\overline{H}_i |_{Z_H}  \cap \overline{H}_j |_{Z_H}  \subset \overline{H}_i \cap \overline{H}_j \simeq \P^{r-2} $ is defined by $ \sum_{l \neq i , j } W_l m_l = 0 \in \C^{n-r}  $.
Hence we have $ \dim \langle m_0, \ldots, m_r  \rangle_{\C} =  \dim \langle m_l \, | \, 0 \leq l \leq r, l \neq i,  j  \rangle_{\C}$.
Thus $(V_0)_{\C},\ldots,(V_r)_{\C}$ satisfy the condition $(*)$ in \autoref{key_lem_linear_alg}.

By a similar argument as \autoref{rem_alpha_R&C},
$V_0,\ldots,V_r$ also satisfy the condition $(*)$ in \autoref{key_lem_linear_alg}.
Hence this lemma is proved.
We note that the last statement of this lemma is already proved in \ref{delta=r-alpha}.
\end{proof}

Now we can prove that the above $\pi_1$ and ${\pi_2}$ satisfy (1)-(4) in \autoref{refined_thm_structure}.

\begin{proof}[Proof of \autoref{refined_thm_structure}]
Let $\pi_1$ and ${\pi_2}$ be the homomorphisms constructed in \autoref{lem_i,j}.
Since ${\pi_2} \circ \pi_1 =\pi$,
the conditions (1) and (2) follows from (i) and (ii) in \autoref{lem_i,j}.
(3) follows from the last statement of \autoref{lem_i,j}.

To check (4), we show the following claim.

\begin{claim}\label{claim_contain_Z}
If surjective group homomorphisms $\pi_1' : \Z^n \arw \Z^{n-c'}$ and $\pi_2' : \Z^{n-c'} \arw \Z^{r'}$ satisfy (1), (2), (3) in \autoref{refined_thm_structure},
the contact locus $Z_{A,H} $ is contained in the $r'$-plane $X_{\pi_2' \circ \pi_1'(A)} \subset   X_A$ for a general hyperplane $H \subset \P^N$ which is tangent to $X_A $ at $1_n$.
\end{claim}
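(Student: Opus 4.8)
The plan is to prove that the $r'$-plane $X_{\pi_2'\circ\pi_1'(A)}$ already fills up the whole contact locus $Z_{A,H}$ for general $H$, by exhibiting inside it a closed subvariety that is contained in $Z_{A,H}$ and has the full dimension $\delta_{X_A}$. Write $\pi' := \pi_2'\circ\pi_1'$. Since $\pi'$ satisfies condition (1) of \autoref{refined_thm_structure}, \autoref{lem_projection_cayley} identifies $A$ with a Cayley sum $B_0 * \cdots * B_{r'}$ with $B_i \subset \Z^{n-r'}$, under which $X_{\pi'(A)}$ becomes the standard $r'$-plane $\P^{r'} = \overline{\{1_{n-r'}\}\times(\*c)^{r'}} \subset X_A$ and the point $1_n$ is sent to $(1_{n-r'},1_{r'})$; put $V_i' := \langle B_i - B_i\rangle_{\Q} \subset \Q^{n-r'}$.

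The heart of the proof is the equality $\alpha(V_0',\dots,V_{r'}') = c'$. Unwinding \autoref{def_join_type_wrt} for the pair $(\pi_1',\pi_2')$ exactly as in the computation preceding \autoref{lem_i,j}, condition (2) of \autoref{refined_thm_structure} says precisely that $p'(B_0) * \cdots * p'(B_{r'})$ is of join type, where $p' : \Z^{n-r'} \arw \Z^{n-r'-c'}$ is the homomorphism induced by the restriction of $\pi_1'$ to $\ker\pi' \simeq \Z^{n-r'}$. Hence \autoref{lem_tangent_dim_alpha}(iv) gives $\alpha(V_0',\dots,V_{r'}') \leq c'$, while \autoref{lem_tangent_dim_alpha}(iii) gives $\delta_{X_A} \geq r' - \alpha(V_0',\dots,V_{r'}')$; together with condition (3), namely $\delta_{X_A} = r' - c'$, this forces $\alpha(V_0',\dots,V_{r'}') = c'$.

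Now fix a general hyperplane $H$ tangent to $X_A$ at $1_n$; under the identification above it is a general hyperplane tangent to $X_{B_0 * \cdots * B_{r'}}$ at $(1_{n-r'},1_{r'})$, so \autoref{lem_tangent_dim} and \autoref{lem_tangent_dim_alpha} apply. By \autoref{lem_tangent_dim}(1), the linear subvariety $\P_{\! *}(\ker \xi) = \overline{Z_{A,H}^{\circ}\cap(\*c)^{r'}}$ of $\P^{r'} = X_{\pi'(A)}$ is contained in the closed set $Z_{A,H} = \overline{Z_{A,H}^{\circ}} \subset X_A$, and by \autoref{lem_tangent_dim_alpha}(iii) together with the equality just established it has dimension $r' - \alpha(V_0',\dots,V_{r'}') = r' - c' = \delta_{X_A}$. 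Since $Z_{A,H}$ is a $\delta_{X_A}$-plane, in particular irreducible of dimension $\delta_{X_A}$, any closed subvariety of $Z_{A,H}$ of dimension $\delta_{X_A}$ must equal $Z_{A,H}$; hence $Z_{A,H} = \P_{\! *}(\ker \xi) \subset X_{\pi'(A)} = X_{\pi_2'\circ\pi_1'(A)}$, which is the assertion of the claim.

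The one delicate point is the bookkeeping hidden in the equality $\alpha(V_0',\dots,V_{r'}') = c'$: one has to check that the Cayley decomposition of $\pi_1'(A)$ with respect to $\pi_2'$ appearing in \autoref{def_join_type_wrt} agrees, under the identifications produced by \autoref{lem_projection_cayley}, with the $p'$-image of the Cayley decomposition of $A$ with respect to $\pi'$, so that \autoref{lem_tangent_dim_alpha}(iv) really does apply to $B_0 * \cdots * B_{r'}$ and $p'$. This is the same computation already carried out just before \autoref{lem_i,j}, so it goes through, but it requires keeping the lattices, chosen sections and identifications straight; everything past that is a routine dimension count.
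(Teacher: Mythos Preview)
Your proof is correct and follows essentially the same approach as the paper's: both identify $A$ with a Cayley sum $B_0 * \cdots * B_{r'}$ via $\pi' = \pi_2'\circ\pi_1'$ using \autoref{lem_projection_cayley}, invoke \autoref{lem_tangent_dim_alpha}(iv) (with the induced $p'$ coming from $\pi_1'$) together with condition (3) to obtain the equality $\dim \big(Z_{A,H}^{\circ}\cap(\*c)^{r'}\big) = r'-c' = \delta_{X_A}$, and then conclude $Z_{A,H} = \overline{Z_{A,H}^{\circ}\cap(\*c)^{r'}} \subset X_{\pi'(A)}$ by irreducibility. The paper compresses your intermediate computation of $\alpha(V_0',\dots,V_{r'}') = c'$ into the single chain $\delta_{X_A} = \dim Z_{A,H} \geq \dim Z^{\circ}_{A,H}\cap(\*c)^{r'} \geq r'-c' = \delta_{X_A}$, but the content is the same; your extra paragraph making the bookkeeping for $p'$ explicit is a welcome clarification of a point the paper leaves implicit.
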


\begin{proof}[Proof of \autoref{claim_contain_Z}]
Let $A'_0,\dots, A'_{r'} \subset \Z^{n-r'}$ be finite subsets obtained by
applying \autoref{lem_projection_cayley} to $\pi_2' \circ \pi_1' : \Z^n \arw \Z^{r'}$.
In particular,
$X_{\pi_2' \circ \pi_1'(A)}  $ coincides with $X_{ \{0,e_1, \dots, e_{r'} \}}$
under the identification $X_A= X_{A'_0 * \dots *A'_{r'}  }$ 

Applying \autoref{lem_tangent_dim_alpha} (iv) to $A'_0 * \dots *A'_{r'}  $,
it holds that
\[
\delta_{X_A } = \dim Z_{A,H} \geq \dim Z^{\circ}_{A,H}  \cap (\*c)^{r'}  \geq r' -c' = \delta_{X_A},
\]
where the last equality follows from condition (3) for $\pi_1',\pi_2'$.
Hence $Z_{A,H} $ coincides with the closure of $Z^{\circ}_{A,H}  \cap (\*c)^{r'} $.
Thus $Z_{A,H}$ is contained in the closure of $ (\*c)^{r'} $, which is nothing but the $r'$-plane $X_{\pi_2' \circ \pi_1'(A)} \subset X_A$.
\end{proof}

Take surjective group homomorphisms $\pi_1' : \Z^n \arw \Z^{n-c'}$ and $\pi_2' : \Z^{n-c'} \arw \Z^{r'}$ for some $r',c' \geq 0$
which satisfy (1), (2), (3) in \autoref{refined_thm_structure}.
By \autoref{claim_contain_Z},
the Cayley structure $\pi':= \pi_2' \circ \pi_1'$ of $A$ satisfies (i) in \autoref{lem_find_A_i}.
Hence $X_{\pi(A)} \subset X_{\pi'(A)}$ holds by \autoref{lem_find_A_i} (iii).
Thus $\ker (\pi_2' \circ \pi_1') =\ker \pi' \subset \ker \pi = \ker (\pi_2 \circ \pi_1)$ holds and 
$\pi :\Z^n \arw \Z^r$ factors $\pi'$ as
\[
\pi : \Z^n \stackrel{\pi'}{\arw } \Z^{r'} \stackrel{\varpi}{\arw} \Z^r
\]
for some $\varpi$.

Since $\ker \pi_1' \subset \ker ( \pi_2' \circ \pi_1') = \ker \pi'$,
the rest is to show $\ker \pi_1 \subset \ker \pi_1'$.
Consider 
\[
\Z^{n-c'} \stackrel{\pi_2'}{\arw} \Z^{r'}  \stackrel{\varpi}{\arw} \Z^r.
\]
Since the Cayley structure $\pi_2'$ of  $\pi_1'(A) \subset \Z^{n-c'} $ is of join type by (2),
so is $\varpi \circ \pi_2'$ by \autoref{lem_pi_pi'}.
By (iii) in \autoref{lem_i,j} for $\tilde{\pi}_1=\pi_1' : \Z^n \arw \Z^{n-c'}$ and $\tilde{\pi}_2 = \varpi \circ \pi_2' : \Z^{n-c'} \arw \Z^r$,
we have $\ker \pi_1 \subset \ker \tilde{\pi}_1 = \ker \pi_1'$.
Thus (4) follows.
\end{proof}

\subsection{Proofs of \autoref{thm_structure} and \autoref{thm_structure_max}}

\autoref{refined_thm_structure} implies \autoref{thm_structure} as follows.

\begin{proof}[Proof of \autoref{thm_structure}]



 

  We take $\pi_1$ and $\pi_2$ as in \autoref{refined_thm_structure}.
  Set $\pi = \pi_2 \circ \pi_1$.
  Applying \autoref{lem_projection_cayley} to $A \subset \Z^n$ and $\pi : \Z^n \arw \Z^r$,
  we have finite subsets $A_0,\ldots,A_r \subset \Z^{n-r}$ such that $A$ is $\Z$-affinely equivalent to $A_0 * \cdots * A_r$.

  Recall the construction of $A_i$.
  We take a section $s : \Z^r \arw \Z^n$ of $0 \arw \ker \pi \arw \Z^n \stackrel{\pi}{\longrightarrow} \Z^r \arw 0$
  and set 
  \[
  A_i = (\pi^{-1}(u'_i) \cap A ) - s(u'_i) \subset \ker \pi \simeq \Z^{n-r},
  \]
  where $\pi(A)=\{u'_0,\ldots,u'_r\}$.

  Since $\pi_1  : \Z^n \arw \Z^{n-c}$ is surjective,
  the restriction of $\pi_1$ on $\ker \pi$ gives a surjective group homomorphism
  $\pi_1 |_{\ker \pi } : \ker \pi  \arw \ker \pi_2 \simeq \Z^{n-c-r}$.
  By the definition of $A_i$,
  \[
  \pi_1(A_i) = ({\pi_2}^{-1} (u'_i) \cap \pi_1(A))  - \pi_1 (s(u'_i)) \subset \ker {\pi_2}
  \]
  holds.
Again, it follows from \autoref{lem_projection_cayley} that
$\pi_1(A)$ is $\Z$-affinely equivalent to $\pi_1(A_0) * \cdots * \pi_1(A_r) $ 
  since $\pi_1 \circ s $ is a section of ${\pi_2} : \Z^{n-c} \arw \Z^r$.
  By the condition (2),
  $\pi_1(A_0) * \cdots * \pi_1(A_r) $ is of join type.
  Hence $A_0,\ldots,A_r \subset \Z^{n-r}$ and $p   : \Z^{n-r} \simeq \ker \pi  \stackrel{\pi_1 |_{\ker \pi }}{\longrightarrow} \ker {\pi_2} \simeq \Z^{n-c-r} $
  satisfy (a) and (b).
Conditions (c), (d) follow from (3), (4) respectively.
\end{proof}

\autoref{thm_structure_max} follows from \autoref{lem_tangent_dim_alpha} (iv) and \autoref{thm_structure}.

\begin{proof}[Proof of \autoref{thm_structure_max}]
By \autoref{lem_tangent_dim_alpha} (iv),
$\delta_{X_A}$ is at least the maximum of the values $r-c$ in the statement of \autoref{thm_structure_max}.
The opposite inequality follows from \autoref{thm_structure}.
\end{proof}

\subsection{Examples}

In the following examples, 
we assume that $A \subset \Z^n$ spans $\Z^n$ as an affine lattice.

\begin{ex}\label{ex_delta=0}
If $\delta_{X_A} =0$,
we can take $\pi_1 =\id_{\Z^n} : \Z^n \arw \Z^n$ and ${\pi_2} : \Z^n \arw \Z^{0} $ with $r=c=0$.

As $A_i$ and $p$ in \autoref{thm_structure},
we can take $r=c=0$, $A_0=A$, and $p=\id_{\Z^n}$ 
since $A$ is $\Z$-affinely equivalent to the Cayley sum $A \times \{0\} \subset \Z^n \times \Z^{0}$ of join type.
\end{ex}

\begin{ex}\label{ex_in_intro_again}
Let $A=\{(0,0,0), (1,0,0), (0,1,0), (1,1,0), (0,0,1)\} \subset \Z^3$ as in \autoref{ex_not_unique}.
In this case,
$\pi_1$ and $\pi_2$ are the identity $\id_{\Z^3} : \Z^3 \arw \Z^3$ and the projection $\pr_3 : \Z^3 \arw \Z$ to the third factor
respectively
with $r=1,c=0$.
\end{ex}

\begin{ex}\label{subsec_ex}
Let $A \subset \Z^2 \times \Z^3$ be the Cayley sum of $A_0,\ldots,A_3 \subset \Z^2$ for
\begin{align*}
&A_0=\{(0,0),(1,0), (2,0)\}, \quad A_1 =\{ (0,0), (0,1), (0,2)\},\\
&\hspace{15mm} A_2=A_3 =\{(0,0),(1,0),(0,1),(1,1)\} .
\end{align*}

If there exists a surjective group homomorphism $\pi' :  \Z^2 \times \Z^3 \arw \Z^{r'}$
such that $\pi'(A)$ is $\Z$-affinely equivalent to $\{0,e_1,\ldots,e_{r'}\}$,
then $\pi'(A_0 \times \{0\} )$ consists of one point.
In fact, for $ l=0,1,2$ and $\bar{u}_l = \pi' ((l,0),0) \in \pi'(A_0 \times \{0\}) $,
it holds that $\bar{u}_0 - 2 \bar{u}_1 + \bar{u}_2=0$.
Since $\pi'(A)$ consists of $\Z$-affinely independent $r'+1$ points,
$\bar{u}_0=\bar{u}_1=\bar{u}_2$ must hold.
Similarly,
$\pi'(A_1 \times \{e_1\})$ consists of one point as well. 
Hence $\pi'$ factors through the projection $\pr : \Z^2 \times \Z^3 \arw \Z^3$ to the second factor
(we use the notation $\pr$ to distinguish $\pi ={\pi_2} \circ \pi_1$ in the proof of \autoref{refined_thm_structure},
although we will see $\pr={\pi_2} \circ \pi_1$ in the following paragraphs).

Take $\pi_1, {\pi_2} $ in \autoref{refined_thm_structure} for this $A$.
By the previous paragraph,
${\pi_2} \circ \pi_1$ factors through the projection $\pr : \Z^2 \times \Z^3 \arw \Z^3$.
In fact,
we can take $\pi_1=\pr$ and ${\pi_2} = \id_{\Z^3}$ with$(r,c) = (3,2)$, and hence $\delta_{X_A}=3-2=1$.
The reason is as follows.


By the construction of $\pi_1,{\pi_2}$,
$Z_H$ is contained in $X_{{\pi_2} \circ \pi_1 (A)}$
for general $H$ which is tangent to $X_A$ at $1_5=(1_2,1_3)$.
Since $\pi_2 \circ \pi_1$ factors through $\pr$,
we have $X_{{\pi_2} \circ \pi_1 (A)} \subset X_{\pr(A)}$.
Hence $Z_H$ is contained in $X_{\pr(A)} \simeq \P^3$.
Applying \autoref{lem_tangent_dim} and \autoref{lem_tangent_dim_alpha} to $A_0 * A_1 * A_2 *A_3$,
we have the following description of $Z_H = \overline{Z_H^{\circ} \cap (\*c)^3}$ :
Since $V_0 =\Q (1,0),V_1=\Q (0,1), V_2=V_3= \Q^2$,
 $(m_0,m_1,m_2,m_3) \in K_{\C}$ induced by $H$ is written as $((-a-a',0), (0,-b-b') ,(a,b),(a',b'))$ for general $a,b,a',b' \in \C$.
Hence
\[
Z_H =\left\{ [W_0:W_1:W_2:W_3] \in \P^3 \, | \, W_0 m_0 + W_1 m_1 + W_2 m_2 +W_3 m_3=0 \right\} \subset \P^3
\]
is the line defined by $(-a-a')W_0 +a W_2 + a'W_3 = (-b-b')W_1 + b W_2 +b' W_3 =0 $.
Since $Z_H$ is contained in $X_{{\pi_2} \circ \pi_1 (A)} \subset X_{\pr(A)} =\P^3$ for any general $a,b,a',b'$,
it must hold that $X_{{\pi_2} \circ \pi_1 (A)} =X_{\pr(A)}$,
i.e.,
$\ker {\pi_2} \circ \pi_1 = \ker \pr$.
Hence we may assume ${\pi_2} \circ \pi_1 =  \pr$.
For these $V_i$'s,
we have $\alpha=\dim_{\C} \langle (-a-a',0), (0,-b-b') ,(a,b),(a',b') \rangle =2$.
This means that $c=\rank (\ker \pi_1)=2$.
Since $\ker \pi_1 \subset \ker {\pi_2} \circ \pi_1 =\ker \pr = \Z^2 \times \{0\}$,
we have $\ker \pi_1 = \Z^2 \times \{0\}$. 
Thus we can take $\pr : \Z^2 \times \Z^3 \arw \Z^3$ as $\pi_1$ and
hence $\pi_2 =\id_{\Z^3}$.
We will see this example again in the next section.
\end{ex}

\begin{ex}

In the above examples,
$\pi_1$ or $\pi_2$ is the identity homomorphism.
As an example with $\pi_1 \neq \id_{\Z^n},\pi_2 \neq \id_{\Z^r}$,
we consider 
\[
A =\{0,e_1,e_2,e_3,e_4,e_4,e_5,e_6, u,v\} \subset \Z^6,
\]
where $u=(-1,2,0,0,-2,1), v = (0,0, -1,2,-2,1)$.
We note that this example is nothing but Example 6.4 in \cite{BDR} up to a change of indices of $e_1,\dots,e_6$ (see also \cite[Section 4]{DN}).
By \cite[Example 6.4]{BDR}, $X_A \subset \P^8$ is strongly self dual,
i.e., $X_A = X^*_A$ holds in a suitable sense (see \cite{BDR} for self duality). 
In particular, we have $\delta_{X_A} =1$.

We define $\Z^6 \stackrel{\pi_1}{\longrightarrow} \Z^5 \stackrel{\pi_2}{\longrightarrow} \Z^2$ by
\[
(x_1,\dots, x_6 ) \mapsto (x_1,x_2,x_3,x_4,x_5 +2 x_6) \mapsto (x_1 + x_2, x_3+x_4).
\] 
For $\pi := \pi_2 \circ \pi_1$,
we have $\pi(A) = \{(0,0),(1,0),(0,1)\}$.
Let $A^0,A^1,A^2 \subset A$ be the fibers of $\pi |_{A}  $ over $(0,0),(1,0),(0,1)$ respectively.
Then 
\[
A^0 =\{0,e_5,e_6\}, \quad  A^1 =\{e_1,e_2,u\}, \quad  A^2=\{e_3,e_4,v\}.
\]
For the standard basis $f_1,\dots,f_5$ of $\Z^5$,
it holds that
\[
\pi_1(A^0) =\{0,f_5,2 f_5\}, \ \ \pi_1(A^1) = \{f_1,f_2, - f_1 +2f_2\}, \ \ \pi_1(A^2) = \{f_3,f_4 , - f_3 +2f_4\}
\]
and hence $\langle \pi_1(A^i) -\pi_1(A^i) \rangle \subset \ker \pi_2$ for $i=0,1,2$ are
\[
 \langle f_5 \rangle,   \quad  \langle  f_2-f_1 \rangle ,
 \quad  \langle f_4 -f_3\rangle
\]
respectively.
Since $ \langle f_5 \rangle +  \langle  f_2-f_1 \rangle +\langle f_4 -f_3\rangle \subset \ker \pi_2 \subset \Z^5$ is the inner direct sum,
$\pi_2$ is a Cayley structure of $\pi_1(A)$ of join type.
Hence $\pi_1,\pi_2$ satisfy conditions (1), (2) in \autoref{refined_thm_structure}.
Condition (3) also holds since $r=2, c=1$ and $\delta_{X_A} = 1$.
If condition (4) is not satisfied for these $\pi_1,\pi_2$,
there exists surjective group homomorphisms $\Z^6 \arw \Z^{6-c} $ and $\Z^{6-c} \arw \Z^{r}$ for $r=1, c=0$
which satisfy (1), (2).
This means that $A $ is $\Z$-affinely equivalent to a Cayley sum $B_0 * B_1 $ of join type.
We can check that there are no such $B_i$'s, and hence condition (4) is satisfied for these $\pi_1,\pi_2$.
%
\end{ex}

\section{Geometric description}\label{sec_geometry}

In this section,
we interpret \autoref{thm_structure} geometrically.

\subsection{Joins and toric varieties}\label{subsec_join}

Let $X_0,\ldots,X_r \subset \P^N$ be projective varieties
such that $ \overline{x_0  \dots x_r} $ is $r$-dimensional for general $(x_0, \dots, x_r) \in X_0 \times \dots \times X_r$,
where $\overline{x_0 \dots x_r} \subset \P^N$ is the linear subvariety
spanned by $r+1$ points $x_0,  \dots, x_r$
(in this paper, we always assume this condition when we consider joins of varieties).

The \emph{join} $J(X_0, \dots, X_r) \subset \PN$ of $X_0, \dots, X_r \subset \PN$ is defined to be
\[
J(X_0, \dots, X_r) = \bigcup_{x_0, \in X_0, \dots, x_r \in X_r} \overline{x_0  \dots x_r}  \subset \PN.
\]
Then we have $\dim J(X_0, \dots, X_r) \leq r + \sum_{i=0}^r \dim X_i$ and $\delta_{ J(X_0, \dots, X_r)} \geq r$.
\\

Now we consider a Cayley sum of join type defined in \autoref{def_join_type}.
It indeed gives a join of toric varieties.
Moreover, it has the following simple structure.

Let $A_0,\ldots,A_r \subset \Z^{n-r}$ be finite sets and let $A=A_0 * \cdots * A_r  $ be the Cayley sum.
We set $\Lambda_i = \CC^{\# A_i}$ for $0 \leq i \leq r$
and
$\Lambda = \CC^{\# A} = \Lambda_0 \oplus \dots \oplus \Lambda_r$.
Then we have toric varieties $X_{A_i} \subset \cP(\Lambda_i) \subset \cP(\Lambda)$.
We note that $X_{A_i} \subset \cP(\Lambda)$ coincides with the torus invariant subvariety of $X_A \subset \cP(\Lambda)$
corresponding to the face $\Conv(A_i \times \{e_i\}) $ of $\Conv(A)$.

Consider the join
\[
J(X_{A_0},\ldots,X_{A_r}) \subset \cP(\Lambda) = \cP(\Lambda_0\oplus \dots \oplus \Lambda_r).
\]
The structure of this join is simple; for example,
$\dim J(X_{A_0},\ldots,X_{A_r}) = r+\sum_{i=0}^r \dim X_{A_i}$
and $X_{A_i} \cap X_{A_j} = \emptyset$ if $i \neq j$.
The following equality also holds.

\begin{lem}\label{del_J=r+del_i}
In the above setting,
$\delta_{J(X_{A_0},\ldots,X_{A_r})} = r + \sum_{i=0}^r \delta_{X_{A_i}}$.
\end{lem}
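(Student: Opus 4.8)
The plan is to reduce the computation of $\delta_{J(X_{A_0},\ldots,X_{A_r})}$ to the individual dual defects $\delta_{X_{A_i}}$ by analyzing the contact locus of a general tangent hyperplane and exploiting the direct-sum decomposition $\Lambda = \Lambda_0 \oplus \cdots \oplus \Lambda_r$. Concretely, a hyperplane $H \subset \cP(\Lambda)$ is given by a linear functional $\xi = (\xi_0,\ldots,\xi_r)$ with $\xi_i$ a linear functional on $\Lambda_i$; restricting $\xi_i$ to $\Lambda_i$ defines a hyperplane $H_i \subset \cP(\Lambda_i)$. The key observation, following the standard picture for joins, is that $H$ is tangent to $J(X_{A_0},\ldots,X_{A_r})$ at a general point $\overline{x_0\dots x_r}$ (with $x_i \in X_{A_i}$ smooth) precisely when each $H_i$ is tangent to $X_{A_i}$ at $x_i$; this comes from the fact that the affine cone over the join is the sum of the affine cones over the $X_{A_i}$'s, so the tangent space at a point of the join splits as a direct sum of the tangent spaces to the cones, and $\xi$ annihilates it iff each $\xi_i$ annihilates the corresponding summand.

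From this, for general $H$ the contact locus on $J(X_{A_0},\ldots,X_{A_r})$ is the join $J(Z_{H_0},\ldots,Z_{H_r})$ of the contact loci $Z_{H_i} \subset X_{A_i}$, where each $Z_{H_i}$ is a $\delta_{X_{A_i}}$-plane for general $H_i$ (by the fact quoted after \autoref{def_Z_H}, applied to each $X_{A_i}$). One must check that for general $H$ the induced functionals $\xi_i$ are indeed general among those tangent to $X_{A_i}$ somewhere: this holds because the restriction map from $\{\xi \text{ tangent to the join}\}$ to $\prod_i \{\xi_i \text{ tangent to } X_{A_i}\}$ is dominant — indeed surjective up to the obvious freedom — since the tangency conditions on the $\xi_i$ are independent across the direct summands. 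Then the join of $r+1$ linear subspaces $\P^{\delta_{X_{A_i}}} \subset \cP(\Lambda_i)$ sitting in the separate direct summands is again a linear subspace, of dimension $r + \sum_i \delta_{X_{A_i}}$ (here the genericity-of-span hypothesis built into the definition of join is automatic because the $\Lambda_i$ are independent). Since the dual defect equals the dimension of the contact locus of a general tangent hyperplane, we conclude $\delta_{J(X_{A_0},\ldots,X_{A_r})} = r + \sum_{i=0}^r \delta_{X_{A_i}}$.

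Alternatively — and this may be cleaner to write — one can argue purely combinatorially using the results already established. Since $A = A_0 * \cdots * A_r$ is of join type after applying the trivial projection $p = \id$ (the $\langle A_i - A_i\rangle$ need not be independent in general, but one reduces to that case by \autoref{lem_projection}, replacing each $A_i$ by $\theta_i^{-1}(A_i)$ spanning its own lattice and reassembling), \autoref{lem_tangent_dim} computes $\dim Z^{\circ}_{A,H} \cap (\*c)^r = r - \dim\langle m_0,\ldots,m_r\rangle_{\C}$ at a general tangent hyperplane at $1_n$. One then needs to upgrade this: $Z^{\circ}_{A,H}$ is not contained in $(\*c)^r$ for a join, so instead one analyzes the full contact locus, which decomposes according to which ``block'' a point lies in. The main obstacle in either approach is the same: verifying carefully that the tangency of $H$ to the join at a general point is equivalent to the simultaneous tangency of the $H_i$ to the $X_{A_i}$, including the transversality/genericity statement that a general $H$ tangent to the join restricts to general $H_i$ tangent to each $X_{A_i}$. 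Once this ``separation of variables'' for the tangency conditions is in hand, the dimension count is immediate.
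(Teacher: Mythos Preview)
Your first approach is correct and is essentially the paper's proof: the paper invokes Terracini's lemma to say that the contact locus on the join of a general tangent hyperplane $H$ is the join $J(Z_{X_0,H},\ldots,Z_{X_r,H})$ of the contact loci, exactly as you argue via the splitting of the tangent space to the affine cone. The only cosmetic difference is that the paper reduces by induction to $r=1$ and cites Terracini by name, whereas you unpack the mechanism (direct-sum decomposition of $\Lambda$, componentwise tangency) and treat general $r$ in one pass; your explicit check that a general $H$ restricts to general $H_i$ is a point the paper leaves implicit.
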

\begin{proof}
  Consider the case of $r = 1$. Write $X := X_A \subset \PN$ and $X_{i} := X_{A_i} \subset \PN$.
  For a general hyperplane $H$ which is tangent to $X$,
  the contact locus $Z_{X, H} $ coincides with the join $ J(Z_{X_0, H}, Z_{X_1, H})$ due to Terracini's lemma.
  Hence $\delta_X = \delta_{X_0}+\delta_{X_1}+1$ holds.
  Inductively, we have the assertion.
\end{proof}



The following lemma gives a geometric meaning of Cayley sums of join type.

\begin{lem}[{\cite[Lemma 4.10]{FI}}]\label{em_join _type=join}
  Let $A_0,\ldots,A_r \subset \Z^{n-r}$ be finite sets and let $A=A_0 * \cdots * A_r  $ be the Cayley sum.
  Then $X_A \subset \cP(\Lambda) $ is contained in $ J(X_{A_0},\ldots,X_{A_r})$,
and  $X_A = J(X_{A_0},\ldots,X_{A_r})$ holds if and only if the Cayley sum $A = A_0 * \cdots * A_r$ is of join type.
\end{lem}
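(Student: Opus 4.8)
The plan is to pass to affine cones, where the join of subvarieties lying in complementary coordinate subspaces becomes a product. Write $\widehat{X_A}\subset\Lambda$ and $\widehat{X_{A_i}}\subset\Lambda_i$ for the affine cones, so that, with $A_i=\{u_{i0},\dots,u_{iN_i}\}$ and $s_0:=1$, the cone $\widehat{X_A}$ is the Zariski closure of $\{(s_it^{u_{ij}})_{0\le i\le r,\ 0\le j\le N_i}\mid t\in(\C^\times)^{n-r},\ s_1,\dots,s_r\in\C^\times\}$, while $\widehat{X_{A_i}}$ is the closure of $\{(t^{u_{ij}})_j\mid t\in(\C^\times)^{n-r}\}$. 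Under $\Lambda=\Lambda_0\oplus\cdots\oplus\Lambda_r$, the cone over $J(X_{A_0},\dots,X_{A_r})$ is exactly $\widehat{X_{A_0}}+\cdots+\widehat{X_{A_r}}=\widehat{X_{A_0}}\times\cdots\times\widehat{X_{A_r}}$. In particular this cone is closed and irreducible of dimension $\sum_{i=0}^r(\dim X_{A_i}+1)$; moreover any choice of one point from each $\cP(\Lambda_i)$ spans an honest $r$-plane, so the standing general-position hypothesis built into the definition of joins holds automatically here and $\dim J(X_{A_0},\dots,X_{A_r})=r+\sum_{i=0}^r\dim X_{A_i}$. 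The first assertion of the lemma is then immediate: a point of the dense torus of $X_A$, written as $(s_0(t^{u_{0j}})_j,\dots,s_r(t^{u_{rj}})_j)$, visibly lies in $\widehat{X_{A_0}}\times\cdots\times\widehat{X_{A_r}}$, so $\widehat{X_A}\subset\widehat{X_{A_0}}\times\cdots\times\widehat{X_{A_r}}$ and hence $X_A\subset J(X_{A_0},\dots,X_{A_r})$.

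Since $X_A$ and $J(X_{A_0},\dots,X_{A_r})$ are both irreducible with the former contained in the latter, they coincide if and only if they have the same dimension, i.e. if and only if $\dim X_A=r+\sum_{i=0}^r\dim X_{A_i}$. I would now compute both sides combinatorially, using $\dim X_A=\rank\lin{A-A}$ and $\dim X_{A_i}=\rank\lin{A_i-A_i}$. Fixing base points $u_{i0}\in A_i$, the group $\lin{A-A}\subset\Z^{n-r}\times\Z^r$ is generated by the elements $(u_{ij}-u_{i0},0)$, which span $\bigl(\sum_{i=0}^r\lin{A_i-A_i}\bigr)\times\{0\}$, together with the $r$ elements $(u_{i0}-u_{00},e_i)$ for $1\le i\le r$. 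The projection $\mathrm{pr}\colon\Z^{n-r}\times\Z^r\to\Z^r$ maps $\lin{A-A}$ onto $\Z^r$, and inspecting these generators shows $\ker(\mathrm{pr}|_{\lin{A-A}})=\bigl(\sum_{i=0}^r\lin{A_i-A_i}\bigr)\times\{0\}$, whence $\rank\lin{A-A}=r+\rank\bigl(\sum_{i=0}^r\lin{A_i-A_i}\bigr)$. Substituting, the dimension equality is equivalent to
\[
\rank\Bigl(\sum_{i=0}^r\lin{A_i-A_i}\Bigr)=\sum_{i=0}^r\rank\lin{A_i-A_i}.
\]

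Finally, let $\phi\colon\lin{A_0-A_0}\oplus\cdots\oplus\lin{A_r-A_r}\to\Z^{n-r}$ be the addition map, so $\im\phi=\sum_{i=0}^r\lin{A_i-A_i}$. By rank--nullity for abelian groups, $\rank(\im\phi)=\sum_{i=0}^r\rank\lin{A_i-A_i}-\rank(\ker\phi)$, so the displayed equality holds if and only if $\rank(\ker\phi)=0$, i.e. $\ker\phi=0$, since $\ker\phi$ is a subgroup of a free abelian group. That $\phi$ is injective is precisely the condition that $A_0*\cdots*A_r$ be of join type (\autoref{def_join_type}), which finishes the argument. No step here is genuinely hard; the only points needing care are the bookkeeping for a generating set of $\lin{A-A}$ and the observation that the general-position hypothesis defining joins is automatic for subvarieties of complementary coordinate subspaces, so that the formula $\dim J=r+\sum_i\dim X_{A_i}$ is available without extra work.
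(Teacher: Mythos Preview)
The paper does not prove this lemma; it is quoted verbatim from \cite[Lemma 4.10]{FI}. Your argument is correct and is, in fact, the natural one: pass to affine cones, identify the cone over the join with the product $\widehat{X_{A_0}}\times\cdots\times\widehat{X_{A_r}}$ inside $\Lambda_0\oplus\cdots\oplus\Lambda_r$, and reduce the equality question to a dimension count which in turn becomes the rank identity $\rank\bigl(\sum_i\lin{A_i-A_i}\bigr)=\sum_i\rank\lin{A_i-A_i}$.

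One small imprecision: with $s_0:=1$ fixed, the set $\{(s_it^{u_{ij}})_{i,j}\mid t\in(\C^\times)^{n-r},\ s_1,\dots,s_r\in\C^\times\}$ is the image of the open torus in $\P^N$, not the full affine cone $\widehat{X_A}$, which has one more dimension; likewise $\{(t^{u_{ij}})_j\mid t\}$ is not all of $\widehat{X_{A_i}}$. The fix is simply to let $s_0$ (resp.\ an extra scalar on each $\widehat{X_{A_i}}$) vary in $\C^\times$. This does not affect your containment argument, since each block $(s_it^{u_{ij}})_j$ is a $\C^\times$-scalar multiple of a torus point of $X_{A_i}$ and hence lies in $\widehat{X_{A_i}}$ regardless. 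Everything else---the generating set for $\lin{A-A}$, the identification of the kernel of $\pr_2|_{\lin{A-A}}$, and the rank--nullity step---is clean.
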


If a toric variety $X_A$ is a join of some (not necessarily toric) varieties, then
$A$ is in fact a Cayley sum of join type,
and hence $X_A$ has the above structure.
This follows from the following result.
(Since the statement of {\cite[Corollary 4.14]{FI}} is not so clear, we restate it.)

\begin{lem}
  Let $A \subset \ZZ^n$ be a finite set with $\lin{A-A} = \ZZ^n$.
  Assume that the toric variety $X_A \subset \PN$ is a join of $l+1$ projective varieties
  $X_0, \dots, X_l$
  (as we noted in the first paragraph of this subsection,
  we assume that $ \dim \overline{x_0 \cdots x_l} =l$ holds 
  for general $(x_0, \dots, x_l) \in X_0 \times \dots \times X_l$).
  Then there exist
  finite subsets $A_0,\dots,A_r \subset \ZZ^{n-r}$
  with $r \geq l$ such that
  $A$ is $\Z$-affinely equivalent to the Cayley sum $A_0 * \dots * A_r$, which is of join type.
\end{lem}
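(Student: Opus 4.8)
The plan is to deduce the statement from the structure theorem \autoref{thm_structure} by upgrading its conclusion using the join hypothesis. Apply \autoref{thm_structure} to $A$: it produces finite sets $A_0,\dots,A_r \subset \Z^{n-r}$ and a surjective group homomorphism $p : \Z^{n-r} \arw \Z^{n-r-c}$ such that $A$ is $\Z$-affinely equivalent to $A_0 * \cdots * A_r \subset \Z^{n-r}\times\Z^r$, such that $p(A_0)*\cdots*p(A_r)$ is of join type, and such that $\delta_{X_A}=r-c$. Since $X_A=J(X_0,\dots,X_l)$, the general bound $\delta_{J(X_0,\dots,X_l)}\geq l$ recalled in \autoref{subsec_join} gives $\delta_{X_A}\geq l$, hence $r=\delta_{X_A}+c\geq l+c\geq l$; so the Cayley sum $A_0*\cdots*A_r$ already has at least $l+1$ summands. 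It therefore suffices to prove that $c=0$: once $c=0$ the homomorphism $p$ is an isomorphism, so $\langle p(A_i)-p(A_i)\rangle = p(\langle A_i-A_i\rangle)$ for each $i$, and injectivity of $\bigoplus_i\langle p(A_i)-p(A_i)\rangle \arw \Z^{n-r}$ is equivalent to injectivity of $\bigoplus_i\langle A_i-A_i\rangle\arw\Z^{n-r}$; thus $A_0*\cdots*A_r$ itself is of join type, and it is $\Z$-affinely equivalent to $A$.

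To prove $c=0$ I would argue through the contact locus. Fix a general hyperplane $H$ tangent to $X_A$; using the torus action we may take $H$ tangent at $1_n$, so that $Z_{A,H}$ is a $\delta_{X_A}$-plane through $1_n$. On one side, the proof of \autoref{refined_thm_structure} places $Z_{A,H}$ inside the $r$-plane $X_{\pi_2\circ\pi_1(A)}\subset X_A$ with $c=\dim X_{\pi_2\circ\pi_1(A)}-\dim Z_{A,H}$, and among all planes of the form $X_{\pi'(A)}$ containing $Z_{A,H}$ this one is the smallest, by the minimality clauses of \autoref{thm_plane_ample} and \autoref{lem_find_A_i}. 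On the other side, since $X_A=J(X_0,\dots,X_l)$, Terracini's lemma shows that a hyperplane tangent to $X_A$ at a point $x=\sum_i\mu_i x_i$ with $x_i\in X_i$ is tangent to each $X_i$ at $x_i$; hence $Z_{A,H}\supseteq J(Z_{X_0,H},\dots,Z_{X_l,H})$, and a dimension count for general $H$ makes this an equality. The goal is then to combine this join decomposition of $Z_{A,H}$ with the minimality of $X_{\pi_2\circ\pi_1(A)}$ to force $X_{\pi_2\circ\pi_1(A)}=Z_{A,H}$, i.e.\ $c=0$.

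The real obstacle is exactly this last step, equivalently ruling out $\alpha\bigl(\langle A_0-A_0\rangle_\Q,\dots,\langle A_r-A_r\rangle_\Q\bigr)>0$ for the Cayley structure just produced. By \autoref{lem_tangent_dim} and \autoref{lem_tangent_dim_alpha}, a positive value of $\alpha$ means that, as $H$ varies over tangent hyperplanes at $1_n$, the contact locus $Z_{A,H}=\P_{\! *}(\ker\xi)$ is a moving proper linear subspace of the fixed $r$-plane $X_{\pi_2\circ\pi_1(A)}$, cut out by the varying vectors $m_0,\dots,m_r$ of \autoref{lem_tangent_dim}. One must show that the $l$-planes sweeping out the join $X_A=J(X_0,\dots,X_l)$, read through the identification $X_A\simeq X_{A_0*\cdots*A_r}$, force $\langle m_0,\dots,m_r\rangle_\C=0$, so that $Z_{A,H}$ is in fact all of $X_{\pi_2\circ\pi_1(A)}$; this incompatibility between admitting a non-trivial covering by joins (the case $c>0$) and being globally a join is the technical core. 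Granting $c=0$, the proof is complete, the remaining verifications (the torus reduction to a tangency at $1_n$, the Terracini decomposition of $Z_{A,H}$, and the passage between $A_i$ and $p(A_i)$) being routine.
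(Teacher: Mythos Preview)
Your approach contains a genuine gap that cannot be filled: the claim $c=0$ is false in general, even when $X_A$ is a join.

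Consider $X_A$ the cone over $\P^1 \times \P^2 \subset \P^5$ from an external point, so $X_A = J(\P^1\times\P^2,\{\text{pt}\}) \subset \P^6$ with $l=1$. Concretely, $A = A'_0 * A'_1 \subset \Z^3 \times \Z$ where $A'_0 = \{0,1\}\times\{(0,0),(1,0),(0,1)\}$ and $A'_1=\{0\}$. By \autoref{del_J=r+del_i}, $\delta_{X_A} = 1 + \delta_{\P^1\times\P^2} + 0 = 2$. Now suppose some output of \autoref{thm_structure} had $c=0$; then $r=\delta_{X_A}=2$ and $A$ would be $\Z$-affinely equivalent to a Cayley sum of join type with three summands. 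But the tangent space to the cone at a general point $x$ is spanned by the vertex and $\TT_{x_0}(\P^1\times\P^2)$, where $x_0$ is the base point on the ruling through $x$; since $\P^1\times\P^2$ is smooth, the Gauss map of $X_A$ has one-dimensional fibers (the rulings), so $\delta_\gamma(X_A)=1$. A join-type Cayley decomposition with three summands would give, via \autoref{em_join _type=join}, a covering family of $2$-planes along which the tangent space is constant (Terracini), forcing $\delta_\gamma(X_A)\geq 2$, a contradiction. Hence every output of \autoref{thm_structure} for this $A$ has $c\geq 1$; indeed one finds $r=3$, $c=1$. Your heuristic that ``$Z_{A,H}$ is itself a join, hence the minimal toric plane containing it equals $Z_{A,H}$'' fails here: $Z_{A,H}$ is the cone over the contact line in $\P^1\times\P^2$, which is a join of a line and a point, yet it is not of the form $X_{\pi(A)}$ because the contact line in $\P^1\times\P^2$ through a general point is not torus-invariant.

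The paper's proof avoids this trap by working with the \emph{Gauss defect} rather than the dual defect. Terracini's lemma shows directly that $\TT_x X_A$ is constant along the $l$-plane $\overline{x_0\cdots x_l}$, so $\delta_\gamma(X_A)\geq l$; then \cite[Corollary~4.13]{FI} (the Gauss-defect analogue of \autoref{thm_structure}, with no projection $p$ needed) immediately yields a join-type Cayley decomposition with $r=\delta_\gamma(X_A)\geq l$ summands. The point is that join-type Cayley structures are governed by the Gauss defect, which can be strictly smaller than the dual defect, so routing through \autoref{thm_structure} introduces an unavoidable discrepancy $c=\delta_{X_A}-\delta_\gamma(X_A)$.
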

\begin{proof}
  Let $(x_0, \dots, x_l) \in X_0 \times \dots \times X_l$ be general.
  By Terracini's lemma, the embedded tangent space $\TT_xX \subset \P^N$ is constant on
  $x \in \overline{x_0 \cdots x_l} \cap X_{sm}$.
  Hence the Gauss defect $r := \delta_{\gamma}(X_A) \geq \l$ (see \cite{FI} for the definition of Gauss defects).
  Then
  $A$ is $\Z$-affinely equivalent to a Cayley sum $A_0* \cdots * A_r$ of join type for some $A_0, \dots, A_r$ by {\cite[Corollary 4.13]{FI}}.
\end{proof}

\subsection{Geometric description of dual defects of toric varieties}\label{subsec_geometric_description}




We denote by $C(X) \subset X \times \Pv$
the closure of $\set*{(x, H) \in X_{sm} \times \Pv}{\TT_xX \subset H}$,
which is called the \emph{conormal variety} of $X$.
Set $\pr_i$ to be the projection from $C(X)$ to the $i$-th factor for $i = 1,2$.
By definition, $ \pr_2(C(X)) \subset \Pv$ is the dual variety $X^*$ of $X$.
We note that $\pr_1(\pr_2^{-1}(H)) \subset X$
is nothing but the contact locus $Z_H$ of $H$ on $X$ for general $H \in X^*$.
For a subvariety $Y \subset X$, we denote by $C(X)|_Y \subset Y \times \Pv$
the closure of $C(X_{sm}) \cap (Y \times \Pv)$.

The inequality in the following lemma was already shown in \cite[Proposition 3.5]{LS}.
Later we need the statement in the equality case.

\begin{lem} \label{lem_covering_family-plus} \label{lem_covering_family}
  Let $X \subset \PN$ be a projective variety.
  Take a general $H \in X^*$,
  and let $Y \subset X$ be a closed subvariety
  of codimension $c$
  such that $Y_{sm} \cap Z_H \cap X_{sm} \neq \emptyset$.
  Then we have $\delta_X \geq \delta_Y-c$.
  Moreover, if $\delta_X = \delta_Y-c$ holds, then we have $Y^* = \pr_2(C(X)|_Y) \subset X^*$ and
  $\pr_2^{-1}(H) = \pr_2^{-1}(H) \cap C(X)|_Y$; in particular,
  the contact locus on $X$ of $H$ is contained in $Y$.
\end{lem}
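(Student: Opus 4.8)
The plan is to phrase everything in terms of the conormal variety $C(X)$ and to read off the equality case from the dimension count that proves $\delta_X \ge \delta_Y - c$ (the inequality itself being \cite[Proposition 3.5]{LS}, which I would reprove in this language). Throughout I would use the standard facts that $\dim C(X) = N-1$ and that for general $H \in X^* = \pr_2(C(X))$ the fibre $\pr_2^{-1}(H)$ is irreducible of dimension $\delta_X$, has its generic point over $X_{sm}$, and is carried by $\pr_1$ generically injectively onto the contact locus $Z_H$, which is a linear $\delta_X$-plane (see e.g. \cite{Te}).

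First I would record two properties of $C(X)|_Y$. Since $Y_{sm}\cap Z_H\cap X_{sm}\neq\emptyset$, the set $Y\cap X_{sm}$ is dense open in $Y$; over it $C(X_{sm})\cap(Y\times\Pv)$ is the restriction of the projectivised conormal bundle of $X$, an irreducible $\PP^{\,N-1-\dim X}$-bundle, so $C(X)|_Y$ is irreducible of dimension $\dim Y+(N-1-\dim X)=N-1-c$. Moreover $\TT_y Y\subseteq\TT_y X$ for $y\in Y_{sm}\cap X_{sm}$, so $\{H:\TT_y X\subseteq H\}\subseteq\{H:\TT_y Y\subseteq H\}$; taking closures gives $C(X)|_Y\subseteq C(Y)$, hence $\pr_2(C(X)|_Y)\subseteq Y^*$, while also $\pr_2(C(X)|_Y)\subseteq\pr_2(C(X))=X^*$.

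Next the dimension count. Choosing $y_0\in Y_{sm}\cap Z_H\cap X_{sm}$ gives $(y_0,H)\in C(X_{sm})\cap(Y\times\Pv)\subseteq C(X)|_Y$, so the fibre $F:=\pr_2^{-1}(H)\cap C(X)|_Y$ is non-empty. As $C(X)|_Y$ is irreducible of dimension $N-1-c$ and dominates $\overline{\pr_2(C(X)|_Y)}\subseteq Y^*$, which has dimension $N-1-\delta_Y$, every non-empty fibre — in particular $F$ — has dimension at least $(N-1-c)-(N-1-\delta_Y)=\delta_Y-c$; since $F\subseteq\pr_2^{-1}(H)$ and $\dim\pr_2^{-1}(H)=\delta_X$ for general $H$, this yields $\delta_Y-c\le\dim F\le\delta_X$.

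For the equality case, assume $\delta_X=\delta_Y-c$. Then the above is a chain of equalities: $\dim F=\delta_X$, and $(N-1-c)-\dim\overline{\pr_2(C(X)|_Y)}\le\delta_Y-c$ forces $\dim\overline{\pr_2(C(X)|_Y)}\ge N-1-\delta_Y=\dim Y^*$, so $\overline{\pr_2(C(X)|_Y)}=Y^*$ by irreducibility, i.e. $Y^*=\pr_2(C(X)|_Y)\subseteq X^*$. Since $\dim F=\delta_X$ while $\pr_2^{-1}(H)\setminus(X_{sm}\times\Pv)$ has dimension $<\delta_X$, the map $\pr_1$ is generically injective on each top-dimensional component of $F$, so $\pr_1(F)$ is a closed $\delta_X$-dimensional subvariety of the irreducible $\delta_X$-plane $Z_H$, hence $\pr_1(F)=Z_H$; therefore $Z_H=\pr_1(F)\subseteq\pr_1(C(X)|_Y)\subseteq Y$. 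Finally, $\pr_1$ sends the dense open subset $\pr_2^{-1}(H)\cap(X_{sm}\times\Pv)$ of $\pr_2^{-1}(H)$ into $Z_H\subseteq Y$, hence into $C(X_{sm})\cap(Y\times\Pv)$; taking closures yields $\pr_2^{-1}(H)\subseteq C(X)|_Y$, so $\pr_2^{-1}(H)=F$. The delicate points are the generic-fibre facts for $\pr_2$ — irreducibility of $\pr_2^{-1}(H)$, linearity of $Z_H$, and that the fibre lies generically over $X_{sm}$ — which I would cite from standard projective-duality theory rather than reprove; the remainder is the dimension bookkeeping sketched above.
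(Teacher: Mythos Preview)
Your argument is correct and follows essentially the same route as the paper's proof: set up the conormal variety, note $\pr_2(C(X)|_Y)\subseteq Y^*$, use the fibre-dimension inequality for $\pr_2|_{C(X)|_Y}$ to get the chain $\delta_X\ge\dim F\ge\delta_Y-c$, and read off the equality case. The only difference is cosmetic: in the equality case the paper concludes $\pr_2^{-1}(H)=F$ directly from $\dim F=\delta_X$ and the irreducibility of $\pr_2^{-1}(H)$ (which you already invoked), so your detour through $\pr_1(F)=Z_H$ is unnecessary, though not incorrect.
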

\begin{proof}
  Let us consider
  \[
  \xymatrix{%
    C(X)|_Y \ar@{}[r]|{\mbox{$\subset$}} \ar[d] & C(X) \ar[r]^{\kern-1.5em\pr_2} \ar[d]^{\pr_1} & X^* \subset \Pv
    \\
    Y \ar@{}[r]|{\mbox{$\subset$}} & X \makebox[0pt]{\,.}
  }%
  \]
  Then $\pr_2(C(X)|_Y) \subset Y^*$ holds since
  $\TT_{x'}Y \subset \TT_{x'}X \subset H$ for general $(x', H') \in C(X)|_Y$.
  Since $(x,H) \in C(X)|_Y $ for $x \in Y_{sm} \cap Z_H \cap X_{sm}$,
  we have $H \in Y^*$.
  Since
  $Z_H =\pr_1(\pr_2^{-1}(H)) \simeq \pr_2^{-1}(H)$,
  we have
    \begin{equation}\label{eq:deltaX-geq-deltaY-c}
    \begin{aligned}
    \delta_X &= \dim (\pr_2^{-1}(H)) \\ &\geq \dim (\pr_2^{-1}(H) \cap C(X)|_Y)
    \\
    &\geq \dim C(X)|_Y  - \dim \pr_2(C(X)|_Y) \\
    &\geq \dim C(X)|_Y  - \dim Y^*
 \\
    &= (N-1-\dim X+\dim Y ) - (N-1-\delta_Y)
    = \delta_Y - c.
    \end{aligned}
  \end{equation}


  Next assume that $\delta_X = \delta_Y - c$.
  Then the equality holds in the first inequality of \ref{eq:deltaX-geq-deltaY-c},
  hence we have $\pr_2^{-1}(H) = \pr_2^{-1}(H) \cap C(X)|_Y$.
  By the third inequality, we also have $\dim \pr_2(C(X)|_Y) = \dim Y^* $;
  hence $Y^* = \pr_2(C(X)|_Y) \subset X^*$.
\end{proof}

If $\{Y_{s }\}_{s}$ is a covering family of $X$,
we can take some $Y_s$ as $Y$ in \autoref{lem_covering_family} for general $H$.
Hence we have $\delta_X \geq \delta -c$ as stated in Introduction,
where $c$ is the codimension of $Y_s$ and $\delta = \delta_{Y_{s}} $ for general $s$.

Geometrically,
\autoref{lem_tangent_dim_alpha} (iv) is a special case of the following lemma.

\begin{lem}\label{lem_sub_join}
Let $A \subset \Z^n$ be a finite subset with $\langle  A - A  \rangle = \Z^n  $.
Let $Y \subset X_A$ be a subvariety of codimension $c$ with $ Y \cap (\*c)^n \neq \emptyset$.
Then $\delta_{X_A} \geq \delta_{Y} -c$ holds.

In particular,
if $Y$ is the join of $r+1$ projective varieties, 
then 
$\delta_{X_A} \geq r-c$ holds.
\end{lem}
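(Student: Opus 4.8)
The plan is to deduce this directly from \autoref{lem_covering_family} by exploiting the torus action on $X_A$. Recall that $(\*c)^n$ acts on $\P^N$ through the projective linear automorphisms $t\cdot[X_0:\cdots:X_N]=[t^{u_0}X_0:\cdots:t^{u_N}X_N]$, and that this action preserves $X_A$ together with its open orbit $(\*c)^n\subset X_A$. Since any projective linear automorphism preserves dimensions, codimensions in $X_A$, smooth loci, and dual defects (the dual variety being carried to the dual variety by the contragredient linear map), for every $g\in(\*c)^n$ the translate $g\cdot Y\subset X_A$ is again an irreducible subvariety of codimension $c$ with $\delta_{g\cdot Y}=\delta_Y$, with $(g\cdot Y)_{sm}=g\cdot Y_{sm}$, and with $g\cdot Y$ still meeting $(\*c)^n$. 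We may assume $X_A\neq\P^N$ (otherwise $X_A^*=\emptyset$ and the inequality is trivial since $\delta_Y-c\leq\dim Y\leq N$).

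First I would fix a point $y_0\in Y\cap(\*c)^n$. Since $Y$ is irreducible and $Y\cap(\*c)^n$ is nonempty and open in $Y$, it is dense in $Y$; and since $(\*c)^n\subset X_A$ consists of smooth points of $X_A$, we have $y_0\in Y_{sm}\cap(\*c)^n$. Next I would take a general $H\in X_A^*$. As recalled after \autoref{def_Z_H}, $Z_{A,H}$ is then a $\delta_{X_A}$-plane; in particular, being the closure of $Z_{A,H}^{\circ}=\{x\in(\*c)^n\mid H\text{ is tangent to }X_A\text{ at }x\}$, it forces $Z_{A,H}^{\circ}\neq\emptyset$. Fix $x\in Z_{A,H}^{\circ}$. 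Then $x\in(\*c)^n\subset (X_A)_{sm}$ and $\TT_xX_A\subset H$, so $(x,H)\in C(X_A)$ and hence $x$ lies in the contact locus $Z_H=\pr_1(\pr_2^{-1}(H))$ of \autoref{lem_covering_family}; that is, $x\in Z_H\cap(X_A)_{sm}$.

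Now set $g=x\,y_0^{-1}\in(\*c)^n$ and $Y'=g\cdot Y\subset X_A$. Then $g\cdot y_0=x$, so $x\in(Y')_{sm}=g\cdot Y_{sm}$, and therefore $(Y')_{sm}\cap Z_H\cap(X_A)_{sm}$ contains $x$ and is nonempty. Applying \autoref{lem_covering_family} to the general hyperplane $H$ and the codimension-$c$ subvariety $Y'\subset X_A$ gives $\delta_{X_A}\geq\delta_{Y'}-c=\delta_Y-c$, which is the first assertion. For the last statement, if $Y$ is the join of $r+1$ projective varieties then $\delta_Y\geq r$, as recalled in \autoref{subsec_join}, and hence $\delta_{X_A}\geq\delta_Y-c\geq r-c$.

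I do not expect a deep obstacle here: the only real point is to arrange the hypothesis of \autoref{lem_covering_family}, namely that some smooth point of $Y$ (or of a translate of it) lie on the contact locus of a \emph{general} tangent hyperplane, and the translation by $g$ is precisely what accomplishes this. The remaining ingredients — invariance of $X_A$ and of $(\*c)^n$ under the torus, invariance of $\delta$ under projective equivalence, and the fact that for general $H$ the contact locus already meets the dense torus (built into the $\delta_{X_A}$-plane statement) — are routine.
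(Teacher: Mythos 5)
Your proof is correct and is essentially the paper's own argument: the paper likewise uses the torus translates $\{t\cdot Y\}_{t\in(\C^{\times})^n}$ as a covering family of $X_A$ and applies \autoref{lem_covering_family}, and you merely make explicit the choice of the translate meeting the contact locus of a general $H$. One small slip: a point $y_0\in Y\cap(\C^{\times})^n$ need not lie in $Y_{sm}$ just because it is a smooth point of $X_A$, but this is harmless since $Y_{sm}\cap(\C^{\times})^n$ is a nonempty dense open subset of the irreducible $Y$ and one can choose $y_0$ there.
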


\begin{proof}
By the torus action,
we have an isomorphism $t \, \cdot : X_A \arw X_A$ for each $t \in (\*c)^n $.
Let $t \cdot Y \subset X_A$ be the image of $Y$ by this isomorphism.
Then $\delta_{t \cdot Y} =\delta_Y$ holds for any $t \in (\*c)^n$.
Since $ Y \cap (\*c)^n \neq \emptyset$,
$\{ t \cdot Y\}_{t \in (\*c)^n }$ covers $X_A$.
Hence we have $\delta_{X_A} \geq \delta_{Y} -c$. 
Since the dual defect of the join of $r+1$ varieties is at least $r$, we have the last statement.
\end{proof}

We see one more lemma.

\begin{lem}\label{lem_A_i:non-defective}
Let $A \subset \Z^n$ be a finite subset with $\langle  A - A  \rangle = \Z^n  $.
Take $A_0,\dots,A_r \subset \Z^{n-r}$ and $p : \Z^{n-r} \arw \Z^{n-r-c}$ which satisfy (a),(b),(c) in \autoref{thm_structure}.
Then $\delta_{X_{p(A_i)}}=0$ holds for each $0 \leq i \leq r$.
In particular,
$\delta_{J(X_{p(A_0)},\ldots,X_{p(A_r)})} = r $ holds.
\end{lem}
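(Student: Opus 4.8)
The plan is to realise $J(X_{p(A_0)},\ldots,X_{p(A_r)})$ as a subvariety of $X_A$ of codimension exactly $c$ that meets the open torus, and then to read off the vanishing from the equality case of \autoref{lem_sub_join}. Concretely, I would first use condition (a) to identify $A$ with the Cayley sum $A_0*\cdots*A_r \subset \Z^{n-r}\times\Z^r$, and put $\pi := p\times\id_{\Z^r} : \Z^{n-r}\times\Z^r \arw \Z^{n-r-c}\times\Z^r$, a surjective homomorphism. By \autoref{lem_projection}, $Y := X_{\pi(A)} = X_{p(A_0)*\cdots*p(A_r)}$ is a torus-equivariant subvariety of $X_A$; as $\pi$ is surjective the induced map of big tori is the inclusion of a subtorus, so $Y$ contains $1_n$ and in particular meets $(\*c)^n$. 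By condition (b) and \autoref{em_join _type=join}, $Y = J(X_{p(A_0)},\ldots,X_{p(A_r)})$.

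The next step is to show $\codim_{X_A} Y = c$. Writing $V_i := \langle A_i - A_i\rangle_{\Q} \subset \Q^{n-r}$, the hypothesis $\langle A - A\rangle = \Z^n$ forces $\sum_i \langle A_i - A_i\rangle$ to be a rank-$(n-r)$ subgroup of $\Z^{n-r}$ (the projection $\langle A-A\rangle \arw \Z^r$ is split surjective with kernel $\sum_i\langle A_i - A_i\rangle \times \{0\}$), so $\sum_i V_i = \Q^{n-r}$. Applying the surjection $p_{\Q}$ gives $\sum_i p_{\Q}(V_i) = \Q^{n-r-c}$, and by condition (b) this sum is direct; hence $\sum_{i=0}^r \dim X_{p(A_i)} = \sum_i \dim p_{\Q}(V_i) = n-r-c$. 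Since $p(A_0)*\cdots*p(A_r)$ is of join type, $\dim Y = r + \sum_i \dim X_{p(A_i)} = n-c$, so $Y$ has codimension $c$ in the $n$-dimensional variety $X_A$.

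Now I would chain the standard inequalities. By \autoref{lem_sub_join} applied to $Y \subset X_A$, $\delta_{X_A} \geq \delta_Y - c$; since $Y$ is a join of $r+1$ projective varieties, $\delta_Y \geq r$; and condition (c) reads $\delta_{X_A} = r - c$. Therefore $r-c = \delta_{X_A} \geq \delta_Y - c \geq r-c$, so all of these are equalities and $\delta_Y = \delta_{J(X_{p(A_0)},\ldots,X_{p(A_r)})} = r$. Finally, \autoref{del_J=r+del_i} gives $r = \delta_Y = r + \sum_{i=0}^r \delta_{X_{p(A_i)}}$, and since each summand is non-negative, $\delta_{X_{p(A_i)}} = 0$ for every $i$; the equality $\delta_Y = r$ is the remaining assertion.

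I expect the main obstacle to be the codimension count: one must be sure that passing from $\sum_i V_i$ to $\sum_i p_{\Q}(V_i)$ drops the dimension by exactly $c$ and that join type makes the image sum direct, so that $\dim Y = n-c$ exactly. If $Y$ were smaller-dimensional the chain of equalities would collapse and the argument would only reprove the inequality $\delta_{X_A} \geq r-c$ of \autoref{lem_tangent_dim_alpha} (iv); everything else is bookkeeping with \autoref{lem_projection}, \autoref{em_join _type=join}, \autoref{lem_sub_join} and \autoref{del_J=r+del_i}.
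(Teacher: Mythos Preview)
Your proof is correct and follows essentially the same route as the paper's: realise $Y=X_{(p\times\id_{\Z^r})(A)}=J(X_{p(A_0)},\ldots,X_{p(A_r)})$ inside $X_A$, apply \autoref{lem_sub_join} and \autoref{del_J=r+del_i}, and force equalities from condition (c). The one place you work harder than necessary is the codimension count: since $\pi=p\times\id_{\Z^r}:\Z^n\to\Z^{n-c}$ is surjective and $\langle A-A\rangle=\Z^n$, one has $\langle\pi(A)-\pi(A)\rangle=\pi(\Z^n)=\Z^{n-c}$ directly, so $\dim Y=n-c$ without needing to unpack the join dimension via $\sum_i p_{\Q}(V_i)$; the paper uses this implicitly. (Incidentally, the ``In particular'' clause in the lemma statement is about $J(X_{p(A_0)},\ldots,X_{p(A_r)})$, which is what both you and the paper actually prove.)
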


\begin{proof}
Although we can show this lemma by applying \autoref{thm_structure} to each $A_i$,
we give a more geometric proof using the previous lemmas in this section.

As $Y$ in \autoref{lem_sub_join},
we take $X_{ p \times \id_{\Z^r} (A_0 * \cdots * A_r) } = X_{ p(A_0) * \cdots * p(A_r)}$ in $X_A$.
Since $p(A_0) * \cdots * p(A_r) $ is of join type,
$X_{ p(A_0) * \cdots * p(A_r)}$ is the join $J(X_{p(A_0)},\ldots,X_{p(A_r)})$ by \autoref{em_join _type=join}.
Then we have
\[
r-c =\delta_{X_A} \geq \delta_{J(X_{p(A_0)},\ldots,X_{p(A_r)})} -c = r + \sum_{i=0}^r \delta_{X_{p(A_i)}} -c,
\]
where the middle inequality follows from \autoref{lem_sub_join}
and the last equality follows from \autoref{del_J=r+del_i}.
Hence each $ \delta_{X_{p(A_i)}}$ must be $0$.
\end{proof}

Casagrande and Di Rocco \cite[Corollary 5.5, Remark 5.6]{CD} proved that 
if $X_A$ is normal and $\Q$-factorial,
there exists an elementary extremal contraction of fiber type 
$\psi : X_A \arw X'$ whose fiber is the join of $r+1$ toric varieties and $\delta_{X_A} = r - \dim X'$.
We note that $\dim X'$ is nothing but the codimension of the fiber in $X_A$.  
As an analog of their result for any $X_A$,
we can show \autoref{thm_structure_geometry},
which states that there exists a dominant rational map $\psi : X_A \dashrightarrow (\*c)^c$ for some $c \geq 0$
such that the closure of each fiber is 
the join of $r+1$ non-defective toric varieties and $\delta_{X_A} = r-c$:


\begin{proof}[Proof of \autoref{thm_structure_geometry}]
We take $A_0,\ldots,A_r \subset \Z^{n-r}$ and $ p : \Z^{n-r} \arw \Z^{n-r-c}$ in the statement of \autoref{thm_structure}.
We may assume that $A=A_0 * \cdots * A_r$.

Since $\ker (p \times \id_{\Z^r}) \simeq  \Z^{c}$,
the inclusion $\ker (p \times \id_{\Z^r})  \subset \Z^{n-r} \times \Z^r $ induces a surjective morphism $\psi : (\*c)^{n-r} \times (\*c)^r \arw (\*c)^c$ between algebraic tori.
We regard $\psi$ as a rational map from $X_A $ to $ (\*c)^c$.
Then the closure of the fiber $\psi^{-1}(1_c)$ in $X_A$ is nothing but $X_{ p \times \id_{\Z^r} (A) } =J(X_{p(A_0)},\ldots,X_{p(A_r)})  \subset X_A$.
By \autoref{lem_A_i:non-defective},
we have $ \delta_{X_{p(A_i)}}=0$, i.e.,
$X_{p(A_i)}$ is non-defective.
Hence this $\psi$ satisfies the condition of this theorem.
\end{proof}

Let us consider 
the dual variety $X_A^* \subset \Pv$ of the toric variety $X_A$.
Since the projective duality $(X_A^*)^* = X_A$ holds in characteristic zero,
we have $\delta_{X_A^*} = N-1-n$.
In particular,
$X_A^*$ is always defective if $X_A$ is not a hypersurface.


As the dual of $\psi: X_A \dashrightarrow \Cc$,
a fibration structure of $X_A^* \subset \Pv$
is given 
in the following sense.
When $X_A$ is normal and $\Q$-factorial,
this was proved in \cite[Theorem 5.2]{CD}.

\begin{prop}
  Let $X_A \subset \PN$ and $\psi$ be as in \autoref{thm_structure_geometry}.
  Then there exists a dominant rational map 
  $\psi^*: X_A^* \dashrightarrow \Cc$
  such that
  $\overline{(\psi^*)^{-1}(s)} = (\overline{\psi^{-1}(s)})^*$ in $\Pv$
  and $\delta_{X_A^*} = \delta_{\overline{(\psi^*)^{-1}(s)}} - c$
  for general $s \in \Cc$.
\end{prop}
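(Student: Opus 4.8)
The plan is to build $\psi^*$ from the contact loci on $X_A$, using the equality case of \autoref{lem_covering_family}. Write $Y_s := \overline{\psi^{-1}(s)} \subset X_A$ for $s \in \Cc$. By \autoref{thm_structure_geometry_2} and the torus action, each $Y_s$ is a translate of $Y_{1_c} = J(X_{p(A_0)}, \dots, X_{p(A_r)})$, an irreducible toric subvariety of codimension $c$ in $X_A$ with $\dim Y_s = n-c$, with $Y_s \cap (\*c)^n = \psi^{-1}(s)$ its open orbit (so $\psi^{-1}(s) \subset Y_{s,sm}$), and, the $X_{p(A_i)}$ being non-defective, with $\delta_{Y_s} = r$ by \autoref{del_J=r+del_i}. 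For general $H \in X_A^*$ the contact locus $Z_{A,H}$ meets $(\*c)^n$ (because $C(X_A)$ dominates $X_A$); pick $z \in Z_{A,H} \cap (\*c)^n$ and set $s_0 := \psi(z)$. Then $z \in Y_{s_0,sm} \cap Z_{A,H} \cap X_{A,sm}$ and $\delta_{X_A} = r-c = \delta_{Y_{s_0}} - c$, so the ``Moreover'' part of \autoref{lem_covering_family} applies and yields both $Y_{s_0}^* = \pr_2(C(X_A)|_{Y_{s_0}}) \subset X_A^*$ and $Z_{A,H} \subset Y_{s_0}$. From $Z_{A,H} \cap (\*c)^n \subset Y_{s_0} \cap (\*c)^n = \psi^{-1}(s_0)$ we see that $s_0 = \psi(z)$ does not depend on the choice of $z$, so I define $\psi^* : X_A^* \dashrightarrow \Cc$ by $\psi^*(H) := s_0$ on the dense open locus of general $H$.

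Next I would check that $\psi^*$ is dominant and identify its fibers. Equivariance is immediate: $t \cdot H$ is tangent to $X_A$ at $t \cdot x$ exactly when $H$ is tangent at $x$, so $Z_{A,t\cdot H} = t \cdot Z_{A,H}$ and $\psi^*(t\cdot H) = \psi(t) \cdot \psi^*(H)$; since $\psi|_{(\*c)^n}$ surjects onto $\Cc$, the image of $\psi^*$ is a full $\Cc$-orbit, so $\psi^*$ is dominant and, by equivariance, all its fibers are mutually isomorphic of dimension $\dim X_A^* - c$. For general $H$ with $\psi^*(H) = s$ we have $Z_{A,H} = \overline{Z_{A,H} \cap (\*c)^n} \subset Y_s$, and for any torus point $z$ of $Z_{A,H}$ the tangency $\TT_z Y_s \subset \TT_z X_A \subset H$ (valid as $z \in Y_{s,sm} \cap X_{A,sm}$) shows $H \in Y_s^*$; hence $(\psi^*)^{-1}(s) \subset Y_s^*$. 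Now $\dim X_A^* = N-1-\delta_{X_A} = N-1-(r-c)$, whereas $Y_s^* \subset \Pv$ — the cone over the dual of $Y_s$ computed inside the linear span of $Y_s$ — has $\dim Y_s^* = N-1-r$; so $\overline{(\psi^*)^{-1}(s)}$ is a closed subset of the irreducible variety $Y_s^*$ of the same dimension $N-1-r$, hence equals $Y_s^*$. This gives $\overline{(\psi^*)^{-1}(s)} = Y_s^* = (\overline{\psi^{-1}(s)})^*$.

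The defect identity is then a dimension count using biduality (reflexivity of projective varieties over $\C$): $\delta_{X_A^*} = N-1-\dim(X_A^*)^* = N-1-\dim X_A = N-1-n$, while $\delta_{Y_s^*} = N-1-\dim(Y_s^*)^* = N-1-\dim Y_s = N-1-(n-c)$, so $\delta_{X_A^*} = \delta_{Y_s^*} - c = \delta_{\overline{(\psi^*)^{-1}(s)}} - c$; equivalently, $\{Y_s^*\}_s$ is a covering family of $X_A^*$ of codimension $c$ (it covers since $\bigcup_s C(X_A)|_{Y_s}$ is dense in $C(X_A)$), and the equality instance of \autoref{lem_covering_family} applied to $X_A^*$ gives the same conclusion. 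The main obstacle is making the first paragraph rigorous: one must verify, for general $H \in X_A^*$, that the hypotheses of \autoref{lem_covering_family} hold for the fiber $Y_{s_0}$ through a torus point of $Z_{A,H}$ — the transversality $Y_{s_0,sm} \cap Z_{A,H} \cap X_{A,sm} \neq \emptyset$ and the genericity of $H$ — so that both the crucial inclusion $Y_{s_0}^* \subset X_A^*$ (which fails for arbitrary subvarieties and holds here only because of the defect equality $\delta_{X_A} = \delta_{Y_{s_0}} - c$) and the inclusion $Z_{A,H} \subset Y_{s_0}$ are legitimate; granted this, the equivariance and dimension bookkeeping in the remaining steps are routine.
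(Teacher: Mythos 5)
Your overall strategy is the paper's: show via the equality case of \autoref{lem_covering_family} that the contact locus of a general $H \in X_A^*$ lies in a single fiber closure $Y_{s_0}=\overline{\psi^{-1}(s_0)}$, send $H$ to $s_0$, identify the closure of the fiber of this assignment with $Y_{s_0}^*$, and get the defect identity by biduality (your final dimension count is literally the paper's). The verifications you list as the ``main obstacle'' --- that $z \in Y_{s_0,sm}\cap Z_{A,H}\cap X_{A,sm}$ and that $\delta_{X_A}=\delta_{Y_{s_0}}-c$ --- are in fact the easy part and you have essentially already done them.

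The genuine gap is elsewhere: you never justify that the well-defined \emph{set-theoretic} assignment $H \mapsto s_0$ on a dense set of general points is a \emph{rational map} $\psi^* : X_A^* \dashrightarrow \Cc$, i.e.\ a morphism on some dense open subset. This is the point the paper explicitly flags as requiring care, precisely because $\psi$ is only a rational map on $X_A$ (it cannot be taken to be a morphism in general, cf.\ \autoref{ex2}), so $\psi\circ\pr_1 : C(X_A)\dashrightarrow \Cc$ has indeterminacy. The paper's proof spends most of its length here: resolve the indeterminacy by a birational $\mu:\widetilde{C}(X)\to C(X)$ to get a morphism $\Psi$ to a compactification $B\supset\Cc$, factor $\widetilde{\pr}_2=\pr_2\circ\mu$ through the normalization of $X_A^*$ to get connected fibers, show a general such fiber is contracted by $\Psi$ (using exactly your containment $\pr_2^{-1}(H)\subset C(X)|_{Y_{s_0}}$), and then invoke the rigidity lemma \cite[Lemma 1.15~(a)]{De} to descend $\Psi$ to a rational map $\psi^*$ on $X_A^*$ with $\Psi=\psi^*\circ\widetilde{\pr}_2$. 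Without some such descent argument, ``$\psi^*(H):=s_0$'' is only a function, and your subsequent steps --- dominance giving general fibers of pure dimension $\dim X_A^* - c$, and the closure of a fiber being a subvariety of $Y_s^*$ of the same dimension --- all presuppose that $\psi^*$ is an honest dominant rational map, so the gap propagates through the rest of the argument. Supplying the resolution-of-indeterminacy/rigidity step (or an equivalent constructibility-plus-descent argument) would complete your proof along the same lines as the paper's.
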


\begin{proof}
  We write $X = X_A$.
  For $s\in \Cc$, set $Y_s = \overline{\psi^{-1}(s)} \subset X$.
  From \autoref{thm_structure_geometry} and \autoref{lem_A_i:non-defective}, we have $\delta_X = r-c = \delta_{Y_s} - c$.
  From \autoref{lem_covering_family-plus}, the contact locus on $X$ of a general $H \in X^*$
  is contained in some $Y_s$.
  Thus this proposition can be shown in a similar way to 
  \cite[Proposition 5.1]{CD}; however our $\psi$ is not a morphism and we need to take care of
  this point.

  We take $B$ to be a projective variety containing $\Cc$ as an open subset.
  By resolving the indeterminacy of the rational map 
  $\psi \circ \pr_1 : C(X) \rightarrow X \dashrightarrow \Cc \subset B$,
  we take a birational map $\mu : \widetilde {C}(X)  \rightarrow C(X)$ for some normal projective variety $ \widetilde {C}(X) $
  such that $\Psi :=\psi \circ \pr_1 \circ \mu $ is a morphism from $\widetilde{C}(X) $ to $B$.
  Set $\widetilde{\pr}_2 : = \pr_2 \circ \mu : \widetilde {C}(X)  \arw C(X) \arw X^* \subset \Pv$.
\[
\xymatrix{
\widetilde{C}(X) \ar[r]^{\mu} \ar@/_6mm/[rdd]_(.4){\Psi} \ar@/^7mm/[rr]^{\widetilde{\pr}_2}  & C(X) \ar[d]^{\pr_1} \ar[r]^{\pr_2} & X^*  \\ 
& X \ar@{-->}[d]^{\psi}  &   \\
&  B \supset (\*c)^{c} &   \\
}\]
  Since $\widetilde{C}(X) $ is normal,
  $\widetilde{\pr}_2$ factors through the normalization $\nu : (X^*)^{nor} \arw X^*$,
  and we have a morphism $\widetilde{\pr}'_2 : \widetilde{C}(X) \arw (X^*)^{nor} $.
  Since $\pr_2$ is a projective bundle over $(X^*)_{sm}$ and $ \mu$ is birational,
  $\widetilde{\pr}_2 $ has connected fibers over $(X^*)_{sm}$.
  Hence $\widetilde{\pr}'_2 $ has connected fibers.

  Since $Y_s$'s cover $X$, $C(X)|_{Y_s}$'s cover $C(X)$.
  From \autoref{lem_covering_family-plus},
  we have $\pr_2(C(X)|_{Y_s}) = Y_s^*$ and hence $Y_s^*$'s cover $X^*$.
  Thus, 
  for general $H \in X^*$, we can take some $s \in \Cc \subset B$ 
  such that $H \in Y_s^*$. 
  Then $\pr_2^{-1}(H) \subset C(X)|_{Y_s}$ holds by \autoref{lem_covering_family},
  which implies $\widetilde{\pr}_2^{-1}(H) \subset \Psi^{-1}(s)$.
  In other words, a general fiber of $\widetilde{\pr}_2$, which is also a general fiber of $\widetilde{\pr}'_2 $, is contracted to a point by $\Psi$.

Applying \cite[Lemma 1.15 (a)]{De} to $\widetilde{\pr}'_2 : \widetilde{C}(X) \arw (X^*)^{nor} $ and $\Psi$,
  we have a rational map $\psi^*: X^* \dashrightarrow B$
  such that $\Psi = \psi^* \circ \widetilde{\pr}_2$ holds as rational maps.
  Then $\overline{(\psi^*)^{-1}(s) }= \widetilde{\pr}_2(\Psi^{-1}(s)) = \pr_2(C(X)|_{Y_s}) = Y_s^*$ holds for general $s $.
  From the projective duality $X^{**} = X$ and $Y_s^{**} = Y_s$,
  we have $\delta_{X^*} = N-1 - \dim X = N-1 - (\dim Y_s + c) = \delta_{Y_s^*} -c$.
\end{proof}

\begin{ex}\label{ex2}
Contrary to the $\Q$-factorial case,
we cannot take a morphism as $\psi$ in \autoref{thm_structure_geometry} in general,
even if $X_A $ is normal.

To see this,
consider $A= A_0 * A_1 * A_2*A_3 \subset \Z^2 \times \Z^3$ in \autoref{subsec_ex}.
By  \autoref{subsec_ex}, we have $r=3, c=2$, and hence $\delta_{X_A} =1$.

By the definition of $A_i$,
there exists a birational morphism
$\mu : \P_{\P^1 \times \P^1} (\cale )  \arw X_A \subset \P^{13}$,
where $\cale:=\calo(2,0) \oplus \calo(0,2) \oplus \calo(1,1) \oplus \calo(1,1)$.
The contracted locus of $\mu$ is the disjoint union of two sections of $\P_{\P^1 \times \P^1}(\cale) \arw \P^1 \times \P^1$ corresponding to $\calo(2,0)$ and $ \calo(0,2) $.
Thus the Picard number of $X_A$ is one.
Hence there is no surjective morphism from $X_A$ to a two dimensional variety. 
In this case,
$\psi$ is the rational map
\[
X_A \stackrel{\mu^{-1}}{\dashrightarrow} \P_{\P^1 \times \P^1} (\cale )  \arw \P^1 \times \P^1 \supset (\*c)^2.
\]

Since the natural map $\Sym^d H^0(\cale) \arw H^0(\Sym^d \cale)$ is surjective for any $d \in \N$,
the embedding $ X_A \subset \P^{13}$ is projectively normal.
In particular,
$X_A$ is normal.
\end{ex}

\end{document}